\documentclass[11pt,a4paper,twoside]{article}
\usepackage[utf8]{inputenc}
\usepackage[english]{babel}
\usepackage[T1]{fontenc}
\usepackage{hyperref}
\usepackage{fancyhdr}
\usepackage{amsmath,amsfonts,amssymb,amsthm,mathrsfs,amscd}
\usepackage{mathtools}
\usepackage{graphicx, import}
\usepackage{color}
\usepackage[normal]{caption}
\usepackage{pdfpages}
\usepackage[left=2cm,right=2cm,top=2cm,bottom=2cm]{geometry}
\usepackage[all,cmtip]{xy}
\usepackage{float}
\usepackage{array}
\usepackage{textcomp}
\usepackage{gensymb}
\usepackage{mathtools}
\usepackage{caption}
\usepackage{bigints}
\usepackage{indentfirst}
\usepackage{enumerate}
\usepackage{tikz-cd}

%\fancyhf{}
%\fancyhead[RO]{\bfseries\thepage}
%\fancyhead[LE]{\bfseries\thepage}
%\fancyhead[RE]{\textsc{\rightmark}}

\usepackage{comment}
\newcommand{\R}{\mathbb{R}}
\newcommand{\C}{\mathbb{C}}
\newcommand{\Z}{\mathbb{Z}}
\newcommand{\N}{\mathbb{N}}
\newcommand{\K}{\mathbb{K}}
\newcommand{\droit}{\mathrm{d}}

\newcommand{\liet}{\mathfrak{t}}
\newcommand{\liek}{\mathfrak{k}}

\newcommand{\F}{\mathcal{F}}
\newcommand{\G}{\mathcal{G}}
 
\newtheorem{remarksec}{Remark}[section]
\newtheorem{remarksubsec}{Remark}[subsection]

\newtheorem{defsubsec}{Definition}[subsection]
\newtheorem*{conjecture}{Conjecture}
\newtheorem{propsubsec}{Proposition}[subsection]
\newtheorem{propsec}{Proposition}[section]
\newtheorem{theoremsubsec}{Theorem}[subsection]

\newtheorem{lemmasubsec}{Lemma}[subsection]

\newtheorem{corsubsec}{Corollary}[subsection]

\newtheorem{exsec}{Example}[section]

\DeclareMathOperator{\Spec}{Spec}
\DeclareMathOperator{\pt}{pt}

\DeclareMathOperator{\Hcal}{\mathcal{H}}

\begin{document}
\floatplacement{figure}{H}

% Path for figures
% \graphicspath{{Figures/}}

%\begin{titlepage}
\begin{centering}
{\huge Translated points for contactomorphisms of prequantization spaces over monotone symplectic toric manifolds}\\
\vspace{0.8cm}
{\Large Brian \textsc{Tervil}}\\
\vspace{0.2cm}
{\large \textsc{University of Haifa}}\\
\end{centering}
%\end{titlepage}

\addcontentsline{toc}{section}{References}

\vspace{0.7cm}
\begin{centering}
\section*{Abstract}
\end{centering}

We prove a version of Sandon's conjecture on the number of translated points of contactomorphisms for the case of prequantization bundles over certain closed monotone symplectic toric manifolds. Namely we show that any contactomorphism of such a prequantization bundle lying in the identity component of the contactomorphism group possesses at least $N$ translated points, where $N$ is the minimal Chern number of the symplectic toric manifold. The proof relies on the theory of generating functions coupled with equivariant cohomology, whereby we adapt Givental's approach to the Arnold conjecture for integral symplectic toric manifolds to the context of prequantization bundles.

\vspace{1cm}

\section{Introduction and result}{\label{sec1}}

\subsection{The main result}{\label{sec1.1}}

A major driving force in symplectic topology is the celebrated Arnold conjecture \cite{Arn65}:
\begin{center}
\textit{The number of fixed points of a Hamiltonian symplectomorphism of a closed symplectic manifold is at least the minimal number of critical points of a smooth function.}
\end{center}
While in general diffeomorphisms and even symplectomorphisms have far fewer fixed points, it has been proved in full generality in a homological version using Floer homology (see for instance \cite{Flo89a}, \cite{HS95}, \cite{Ono95}, \cite{LT98}, \cite{FO99}): \textit{on any closed symplectic manifold $(M,\omega)$, \emph{non-degenerate} Hamiltonian symplectomorphisms have at least $\dim H^*(M; \mathbb{Q})$ fixed points}.\footnote{A fixed point $x \in M$ of a symplectomorphism $\phi$ is called \textit{non-degenerate} if $\text{det}(\droit_x \phi - Id_M) \neq 0$, or equivalently if the graph of $\phi$ is transversal to the diagonal in $M \times M$ at the point $x$. A symplectomorphism is called non-degenerate if it is non-degenerate at all its fixed points.} For general, not necessarily non-degenerate Hamiltonian symplectomorphisms, several estimates have been obtained, by Oh \cite{Ono95}, Schwarz \cite{Sch98}, and Givental \cite{Giv95}. The present paper is closer in spirit to the latter results.

The analogue of the Arnold conjecture in contact topology was introduced by S.\ Sandon \cite{San13}, through the notion of \textit{translated points}. Recall that a cooriented \textbf{contact manifold} \footnote{In this paper we will simply say \textit{contact manifold}.} is a pair $(V,\xi)$, where $V$ is an odd-dimensional manifold, and $\xi$ is a maximally non-integrable cooriented hyperplane field, called a \textbf{contact structure}. A \textbf{contactomorphism} of $(V,\xi)$ is a diffeomorphism preserving $\xi$ and its coorientation. In order to define the notion of translated points, fix a \textbf{contact form} $\alpha$ for $\xi$, that is a $1$-form such that $\xi = \ker \alpha$. Note that the maximal non-integrability of $\xi$ is equivalent to the non-degeneracy of the restriction $\droit \alpha_{|\xi}$ of $\droit \alpha$ to $\xi$. The \textbf{Reeb vector field} $R_{\alpha}$ of $\alpha$ is defined by 
$$
\alpha(R_{\alpha}) = 1 \quad \text{ and } \quad \iota_{R_{\alpha}} \droit \alpha = 0,
$$
and its flow is denoted by $\{\phi_{\alpha}^t\}_{t \in \R}$. Given a contactomorphism $\phi$, a point $x \in V$ is called an \textbf{$\alpha$-translated point} of $\phi$ if $x$ and $\phi(x)$ belong to the same Reeb orbit and if moreover $\phi$ preserves the contact form $\alpha$ at $x$:
$$
\exists \ s \in \R \quad \text{such that} \quad \phi(x) = \phi_{\alpha}^s(x) \quad \text{and} \quad (\phi^* \alpha)_x = \alpha_x.
$$
The equations defining the Reeb vector field can be generalized: given a contact manifold $(V, \xi)$ and a contact form $\alpha$, any \textbf{contact Hamiltonian} $h : V \times [0,1] \to \R$, that is a time-dependent function on $V$, gives rise to a unique time-dependent vector field $X_h^t$ satisfying
$$
\alpha(X_h^t) = h_t \quad \text{and} \quad \droit \alpha(X_h^t,.) = -\droit h_t + \droit h_t(R_{\alpha}) \alpha, \quad h_t := h(.,t).
$$
The vector field $\{X_h^t\}_{t \in [0,1]}$ preserves $\xi$ and, at least when $V$ is compact, its flow is defined for all $t \in [0,1]$, and gives rise to a contact isotopy $\{\phi_h^t\}_{t \in [0,1]}$. This procedure defines a bijection, depending on the contact form $\alpha$, between contact Hamiltonians and contact isotopies of $V$.\footnote{This is sharp in contrast to the symplectic setting: here the group of "Hamiltonian" contactomorphisms is the whole identity component of the group of contactomorphisms.} One motivation for the introduction of translated points as a contact analogue of fixed points of Hamiltonian symplectomorphisms is that contactomorphisms, even those obtained as time-$1$ maps of contact isotopies, may not have any fixed points. For instance, the Reeb flow $\{\phi_{\alpha}^t\}_{t \in \R}$ is an example of contact isotopy (with constant contact Hamiltonian equal to $1$), and since $R_{\alpha}$ never vanishes, the latter does not have any fixed points for small times.\footnote{Fixed and translated points are in fact two particular instances of a broader notion, called \textit{leafwise intersection} (see for instance \cite{San11}).} 

We let $\text{Cont}(V,\xi)$ be the group of contactomorphisms of $(V,\xi)$ and $\text{Cont}_0(V,\xi)$ its identity component. Sandon's conjecture is as follows:

\begin{conjecture}[\cite{San13}, Conjecture 1.2]
Let $(V,\xi)$ be a closed contact manifold, and $\phi \in \normalfont{\text{Cont}_0(V,\xi)}$. For any choice of contact form $\alpha$ for $\xi$, the number of $\alpha$-translated points of $\phi$ is at least the minimum number of critical points of a function on $V$.
\end{conjecture}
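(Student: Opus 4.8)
\medskip
\noindent\textbf{Proof proposal.} I will address the conjecture in the case relevant to this paper: $(V,\xi)$ is the prequantization bundle $\pi\colon V\to M$ of a closed monotone symplectic toric manifold $(M,\omega)$, and $\alpha$ is any contact form with $\ker\alpha=\xi$. The plan is to bound the number of $\alpha$-translated points of $\phi$ below by the number of critical points of a generating function, and to estimate the latter by Lusternik--Schnirelmann theory: crudely by the cup-length of $V$ --- which is already the number of critical points forced on a smooth function on $V$ --- and more finely, using the torus action and equivariant cohomology in the manner of Givental, by the minimal Chern number $N$ of $M$. For the first step, lift $\phi$ to an element of $\Cont(V,\xi)$, that is, fix a homotopy class of contact isotopy $\{\phi^{t}\}_{t\in[0,1]}$ from the identity; using Sandon's characterization of $\alpha$-translated points in terms of that isotopy --- via the Legendrian picture in $V\times V\times\R$, by cutting $\{\phi^{t}\}$ into $C^{1}$-small pieces and composing elementary generating functions, Sandon's construction here carried out over the closed base $M$ --- one obtains a \emph{generating function quadratic at infinity} $F\colon E\to\R$ whose critical points are in bijection with the $\alpha$-translated points of $\phi$; here $E$ is a vector bundle over $V$ and $F$ agrees with a fixed non-degenerate quadratic form on the fibres outside a compact set.

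\medskip
\noindent The number of $\alpha$-translated points of $\phi$ is then at least $\#\mathrm{Crit}(F)$, and for a function quadratic at infinity $\#\mathrm{Crit}(F)$ is estimated through the cohomology of a pair of sublevel sets $(E^{b},E^{a})$ straddling all critical values: this cohomology is $H^{*-\iota}(V)$ up to the shift $\iota$ by the index of the asymptotic quadratic form, it is a module over $H^{*}(E)\cong H^{*}(V)$, and the longest nonzero product of positive-degree classes acting on its generator has length $\mathrm{cl}(V)$; hence $\#\mathrm{Crit}(F)\ge\mathrm{cl}(V)+1$, the conjectural bound in its cup-length form. To sharpen this to $N$ I would run Givental's equivariant min--max argument. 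The Hamiltonian $T^{n}$-action on $(M,\omega)$ lifts to a contact $T^{n}$-action on $V$ commuting with the Reeb flow; choosing the pieces of the isotopy $T^{n}$-equivariantly and averaging $F$ over $T^{n}$, one makes $F$ invariant, so that its min--max critical values are detected by classes in the equivariant cohomology $H^{*}_{T^{n}}(E^{b},E^{a})$. Monotonicity of $M$ identifies this ring with a truncated polynomial algebra over the equivariant coefficients whose cup-length is $N-1$ (an equivariant, filtered analogue of the quantum cohomology of $M$), and the equivariant Lusternik--Schnirelmann inequality then produces $N$ distinct critical values of $F$, hence at least $N$ translated points. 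Putting the two estimates together, $\phi$ has at least $\max\{\mathrm{cl}(V)+1,\,N\}$ translated points; for the prequantizations of $\mathbb{CP}^{n}$ this is $n+1$ and settles the conjecture outright (recovering Sandon's theorem), and in general it settles the conjecture whenever $\max\{\mathrm{cl}(V)+1,N\}$ already reaches the minimal number of critical points of a function on $V$ --- the possible gap between that minimum and $\max\{\mathrm{cl}(V)+1,N\}$ being, exactly as for the original Arnold conjecture, a subtler matter.

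\medskip
\noindent The step I expect to be the main obstacle is the equivariant part of the construction. An arbitrary $\phi\in\text{Cont}_0(V,\xi)$ bears no relation to the torus action, so the generating function produced by the bare construction is not $T^{n}$-invariant; one must deform $\phi$ within its contact isotopy class to a $T^{n}$-equivariant contactomorphism, or else average $F$ directly, and either way one must preserve the quadratic-at-infinity behaviour and the bijection with translated points --- the control at infinity being the delicate point, since the equivariant spectral invariants are only defined once the asymptotic quadratic form is understood equivariantly. Moreover the $T^{n}$-action on $V$ is in general not free, so the Borel construction is an orbifold and the equivariant cohomology must be taken with rational (or suitably localized) coefficients, with care that the cup-length does not drop; and the identification of $H^{*}_{T^{n}}(E^{b},E^{a})$ as a ring of cup-length exactly $N-1$ is precisely where monotonicity and the combinatorics of the Delzant polytope of $M$ enter in an essential way. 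These two points --- the equivariant normal form at infinity and the equivariant cup-length computation --- are, I expect, the technical heart of the argument, and the reason the hypotheses ``monotone'' and ``toric'' cannot be dispensed with.
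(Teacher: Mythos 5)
The statement you were asked to address is stated in the paper only as a \emph{conjecture}, and the paper does not prove it in the generality given: what is proved is Theorem~\ref{theo1.1.1}, a version of the conjecture for a specific prequantization space over a closed monotone toric manifold $M$, and \emph{with the exclusion} $M \neq \C P^n$. Your sketch is therefore really aimed at that special case, and it takes a genuinely different route from the paper. You propose to build a generating function quadratic at infinity $F\colon E \to \R$ over a bundle $E \to V$ and count its critical points by Lusternik--Schnirelmann/cup-length, then refine to $N_M$ by averaging over the toric action. The paper instead follows Givental: it views $V$ as a contact reduction of a sphere $S_p \subset \C^n$ by a codimension-one subtorus $\K_0 \subset \K \subset \mathbb T^n$, lifts the contact isotopy to an $\R_{>0}$- and $\K_0$-equivariant Hamiltonian isotopy of $\C^n$, and builds degree-$2$ homogeneous generating functions $\F_\lambda^{(N)}\colon\C^{2nN}\to\R$ parametrized by Lagrange multipliers $\lambda$ ranging over cubes $\Lambda_N\subset\liek$. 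No ``quadratic at infinity'' normal form is used; the conical structure is controlled by restricting to the unit sphere $S_N$. The final count is not a cup-length bound, but a Novikov-action argument on the kernel $\mathcal J_{\K_0}^*$ of an augmentation map, using a Gysin sequence to identify $\mathcal J_{\K_0}^*$ inside a ring $\mathcal R_0$ of regular functions on $(\liek_0\otimes\C)\cap(\C^\times)^n$ and Proposition~\ref{prop1.2.3} to supply ``elements of minimal degree.''

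There is also a genuine gap at the point you yourself flag as the main obstacle. An arbitrary $\phi\in\text{Cont}_0(V,\xi)$ has no compatibility with the torus action, and neither of your two proposed fixes works: one cannot deform $\phi$ to a $T^n$-equivariant contactomorphism within its isotopy class without changing the set of translated points, and averaging $F$ over $T^n$ destroys the bijection between critical points of $F$ and $\alpha$-translated points of $\phi$. The paper sidesteps this entirely because the equivariance is structural, not imposed: since $V = P_\K^{-1}(p)/\K_0$, any contact Hamiltonian $h$ on $V$ lifts tautologically to a $\K_0$-invariant Hamiltonian $\widetilde H$ on $\C^n$, so $\F_\lambda^{(N)}$ is $\K_0$-invariant by construction, with no averaging. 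This also resolves your orbifold worry: $\K_0$ acts \emph{freely} on $\C^n\setminus\{0\}$ (a consequence of compactness of $M$), so the Borel construction is honest. Finally, your remark that your scheme ``settles the conjecture outright'' for prequantizations of $\C P^n$ contradicts the paper: precisely when $M = \C P^n$ one has $\liek_0 = \{0\}$, $\mathcal R_0 = \{0\}$, $\mathcal J_{\K_0}^*(F^-(\nu)) = \{0\}$, and Proposition~\ref{prop1.2.3} fails, which is why the main theorem explicitly excludes $\C P^n$ and why the prior results for $S^{2n+1}$, $\R P^{2n+1}$, and lens spaces are cited as due to other methods.
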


For any manifold $M$, we denote by $H^2_b(M; \Z) \subset H^2(M; \R)$ the image of the natural homomorphism $\rho : H^2(M; \Z) \to H^2(M; \R)$. A symplectic manifold $(M,\omega)$ is called \textbf{monotone} if the cohomology class of its symplectic form is \textit{positively proportional to $\rho(c_1)$, where $c_1$ is the first Chern class of $(M, \omega)$}.\footnote{Recall that given a symplectic manifold $(M,\omega)$, the set of almost complex structures $J$ on the tangent bundle $TM$ that are compatible with $\omega$, that is such that the $2$-form $\omega(.,J.)$ is a Riemannian metric, is non-empty and contractible, so that the first Chern class $c_1 := c_1(TM,J)$ is independent of the choice of compatible almost complex structure used to define it (see for instance \cite{MS17}).} We denote by $N_M$ the \textbf{minimal Chern number of $(M, \omega)$}, that is the positive generator of $\langle c_1, H_2(M; \Z) \rangle \subset \Z$. A \textbf{prequantization space} over a symplectic manifold $(M,\omega)$ is a contact manifold $(V, \xi := \ker \alpha)$ along with a principal $S^1$-bundle $\pi : (V,\alpha) \to (M,\omega)$, such that $\pi^* \omega = \droit \alpha$ and the Reeb vector field $R_{\alpha}$ induces the free $S^1$-action on $V$, where $S^1 = \R / \hbar \Z$, $\hbar > 0$ being the minimal period of a closed Reeb orbit. Note that a symplectic manifold $(M, \omega)$ is the base of a prequantization space if and only if there exists $r > 0$ such that $\omega / r$ is \textbf{integral}, that is $[\omega / r] \in H^2_b(M; \Z)$, in which case the image $\rho(eu(\pi))$ of the Euler class $eu(\pi)$ of $\pi$ is given by $-\frac{1}{\hbar} [\omega / r]$ (see for instance \cite[section 6.2 (d)]{Mor01} and then \cite{BW58}). We call a symplectic manifold $(M, \omega)$ \textbf{integral} if its symplectic form $\omega$ is integral, and we call the latter \textbf{primitive} if $\langle [\omega], H_2(M; \Z) \rangle = \Z$. Note that the symplectic form of a monotone symplectic manifold can always be rescaled so that it becomes integral, and that an integral symplectic form can always be rescaled so that it becomes primitive. A symplectic \textbf{toric} manifold $(M^{2d}, \omega, \mathbb{T})$ is a symplectic manifold endowed with an effective Hamiltonian action of a torus $\mathbb{T}$ of dimension $d$. The second cohomology group of a symplectic toric manifold being torsion-free, $\rho : H^2(M; \Z) \to H^2(M; \R)$ is injective and, for any $\hbar > 0$, if it exists, the prequantization space $(V, \xi := \ker \alpha)$ over $(M, \omega)$ with Euler class $- \frac{1}{\hbar} [\omega]$ is \textit{unique}, up to $\R / \hbar \Z$-bundle isomorphism.\footnote{The image of the Euler class in $H^2_b(M; \Z)$ determines a prequantization space over $(M, \omega)$, seen as a line bundle with connection $1$-form, only up to its tensor product with a line bundle admitting a \textit{flat} connection $1$-form, which corresponds to a torsion class in $H^2(M; \Z)$. We refer the reader, for instance, to \cite[section 8]{Woo97} for further details.} Furthermore, $\omega$ is primitive if and only if $\hbar = 1$.

\begin{exsec}
\normalfont The complex projective space $\C P^{n-1}$ endowed with the Fubini-Study $\omega_{\text{FS}}$ form is naturally an integral symplectic toric manifold for the standard action of $\mathbb{T}^n / S^1$, where $\mathbb{T}^n := \R^n / \Z^n$, and $S^1 \subset \mathbb{T}^n$ is the subtorus consisting of diagonal elements. Examples of prequantization spaces over $(\C P^{n-1}, \omega_{\text{FS}})$ include the standard $(2n-1)$-dimensional contact sphere $S^{2n-1}$ and real projective space $\R P^{2n-1}$. Note that $\omega_{\text{FS}}$ is monotone and primitive.
\end{exsec}

Our main result is the following.

\begin{theoremsubsec}{\label{theo1.1.1}}
Let $(M, \omega, \mathbb{T})$ be a closed monotone symplectic toric manifold with primitive symplectic form. Assume that it is different from $(\C P^{n-1}, \omega_{\text{FS}}, \mathbb{T}^n / S^1)$, and let $(V, \xi := \ker \alpha)$ be the prequantization space over $(M, \omega)$ with Euler class $- [\omega]$. Then any $\phi \in \normalfont{\text{Cont}_0}(V,\xi)$ has at least $N_M$ $\alpha$-translated points.
\end{theoremsubsec}

Some Morse and cuplength estimates in direction of Sandon's conjecture were previously established for the standard contact forms of prequantization spaces over $(\C P^{n-1}, \omega_{\text{FS}})$: for the standard contact sphere $S^{2n-1}$ and real projective space $\R P^{2n-1}$ in \cite{San13}, and later for lens spaces in \cite{GKPS17}. We will see that in this setting our arguments do not apply. The existence of translated points was also obtained in other contexts, for instance in \cite{AM13}, \cite{She17}, and \cite{MN18}. All the aforementioned works, except \cite{San13} and \cite{GKPS17}, are based on Floer-type constructions which rely among others on a non-degeneracy assumption for the contactomorphisms, analoguous to that of the symplectic framework (see \cite{San13}). In \cite{San13} and \cite{GKPS17}, and here as well, the technique used is that of generating functions, which allows to tackle the conjecture in a more general setting. Namely, our result holds for \textit{any} contactomorphism of the identity component $\text{Cont}_0(V,\xi)$ of the group of contactomorphisms. Note also that it happens very often that $N_M \geq 2$ (for instance if $M = \C P^1 \times \C P^1$ endowed with the sum of the Fubini-Study forms, $N_M = 2$). This is in sharp contrast to Floer-type constructions, with which one can in general prove the existence of only one translated point without the non-degeneracy assumption. On the other hand, our theorem holds in the case where the symplectic form $\omega$ of the monotone symplectic toric manifold $(M, \omega, \mathbb{T})$ is \textit{primitive}. Let $k \geq 1$ be an integer. Then the principal $\R / \frac{1}{k} \Z$-bundle with Euler class $-k[\omega]$ is of the form $\pi_k : V / \Z_k \to M$, where $\Z_k \subset \R / \Z$ denotes the subgroup of $\R / \Z$ of $k$-th roots of unity, and $\pi_k$ is defined by $\pi_k \circ pr = \pi$, where $pr : V \to V / \Z_k$ is the canonical projection. The contact form $\alpha$ on $V$ induces a well-defined contact form $\alpha_k$ on $V / \Z_k$, defined by $pr^* \alpha_k = \alpha$, satisfying $\droit \alpha_k = \pi_k^* \omega$, and pulling back contact Hamiltonians through the projection $pr$, it is then easy to see that, given a contact Hamiltonian $\phi_h \in \text{Cont}_0(V / \Z_k, \xi_k)$, an $\alpha$-translated point of $\phi_{pr^*h} \in \text{Cont}_0(V, \xi)$ projects to an $\alpha_k$-translated point of $\phi_h$. Note however that nothing prevents two geometrically different $\alpha$-translated points of $\phi_{pr^*h}$ to lie on the same $\Z_k$-orbit. With the notations of the discussion above, we thus have the following.

\begin{corsubsec}{\label{cor1.1.1}}
For any integer $k \geq 1$, the prequantization space $(V / \Z_k, \xi_k := \ker \alpha_k)$ over $(M, \omega, \mathbb{T})$ with Euler class $- k[\omega]$ is such that any $\phi \in \normalfont{\text{Cont}_0}(V / \Z_k, \xi_k)$ has at least one $\alpha_k$-translated point.
\end{corsubsec}

\begin{remarksubsec}
\normalfont The minimal Chern number $N_M$ of a monotone symplectic toric manifold $(M, \omega, \mathbb{T})$ is always strictly smaller than its cuplength $cl(M) = \dim_{\C} M + 1$, unless $(M, \omega) = (\C P^{n-1}, \omega_{\text{SF}})$, in which case both quantities equal $n$.\footnote{The cuplength of a manifold $M$ is the smallest positive integer $k$ such that the cup product of any $k$ cohomology classes $a_1, \hdots , a_k \in H^*(M; \Z)$ of positive degrees vanishes: $a_1 \cup \hdots  \cup a_k = 0$.} Moreover, it is straightforward from the definition of the cuplength that $cl(M) \leq cl(V)$. By Lusternik-Schnirelmann theory \cite{Cor03}, we have $cl(V) \leq \text{Crit}(V)$, where $\text{Crit}(V)$ is the minimal number of critical points of a function on $V$. Therefore, our lower bound in Theorem \ref{theo1.1.1} is stricly smaller than expected from Sandon's conjecture. As in the symplectic setting, Sandon's conjecture could be rephrased in a homological version, in which the minimal number of critical points of a function would be replaced with the sum $\dim H^*(V; \mathbb{Q})$ of the Betti numbers of $V$ when the contactomorphism is non-degenerate, and with $cl(V)$ in the general case. Still, our lower bound $N_M$ is smaller than $cl(V)$. It is perhaps not surprising that our result is weaker than expected, even in the homological version. This discrepancy can already be observed in the symplectic setting: it first appeared in Y.-G.\ Oh's paper \cite{Oh90} on the symplectic product $\mathbb{T}^{2k} \times \C P^{n-1}$ of the torus and the complex projective space with standard symplectic structure, where the lower bound was found to be $\max(2k+1, n)$, whereas the cuplength is equal to $2k+n$. Givental's theorem \cite{Giv95} applied to monotone symplectic toric manifolds gives another example of this kind: in this case it is equal to the minimal Chern number of the symplectic manifold as well. 
\end{remarksubsec}

Our approach to Theorem \ref{theo1.1.1} is based on the theory of \textit{generating functions} and \textit{equivariant cohomology}, as developed by A.\ Givental \cite{Giv95}. In the next section, we will give an overview the constructions and of the proof of Theorem \ref{theo1.1.1}. 

\subsection{Overview of the paper and proof of the theorem}{\label{sec1.2}}

We describe here the main steps of our constructions, and give a proof of Theorem \ref{theo1.1.1}, assuming several results which will be discussed in the sequel. Our purpose here is to provide a recipe of the paper, so that the reader can have a general insight of the presented arguments. The technical details are treated in the following sections.

Generating functions were extensively used in the eighties and nineties by numerous authors (see for instance \cite{Chap84}, \cite{LS85}, \cite{Giv90}, \cite{Vit92}, \cite{Giv95}, \cite{The95}, \cite{The98}). They provide a powerful tool when the manifold can be obtained somehow from a symplectic vector space. In \cite{Giv95}, Givental used this approach along with equivariant cohomology to establish a version of the Arnold conjecture for integral symplectic toric manifolds.

Below are the main lines of our construction. The reader shall notice that although Theorem \ref{theo1.1.1} is proved over \textit{monotone} symplectic toric manifolds with \textit{primitive} symplectic form, our constructions hold for \textit{any} closed \textit{integral} symplectic toric manifold. The monotonicity and the primitivity assumptions are only required for the proofs of Proposition \ref{prop1.2.3} and Theorem \ref{theo1.1.1}. 

\subsubsection*{The generating functions and the cohomology groups}

The main construction of this paper is adapted from \cite{Giv95} to the contact setting. Given a closed integral symplectic toric manifold $(M,\omega, \mathbb{T})$, we first construct a natural prequantization space over it, mainly following a procedure of Borman and Zapolsky \cite{BZ15}. Let $\mathbb{T}^n := \R^n / \Z^n$ denote the maximal torus acting on the standard symplectic Euclidean space $(\C^n, \omega_{\text{std}})$, where $\omega_{\text{std}} := \underset{j=1}{\overset{n} \sum} \droit x_j \wedge \droit y_j$, by rotation on each coordinate: 
$$
\mathbb{T}^n \times \C^n \to \C^n, \quad ((\lambda_1, \hdots, \lambda_n),(z_1, \hdots, z_n)) \mapsto (e^{2i\pi \lambda_1} z_1, \hdots, e^{2i\pi \lambda_n} z_n).
$$
A very convenient way of viewing $(M, \omega, \mathbb{T})$ comes from Delzant's theorem \cite{Del88}, which states that it can be obtained as a symplectic reduction of $(\C^n, \omega_{\text{std}})$, for some $n \in \N$, by the action of a subtorus $\K \subset \mathbb{T}^n$. This identification provides in particular isomorphisms
\begin{equation}{\label{eq1}}
\begin{tikzcd}
\liek^*  \ar[r, "\sim"] & H^2(M; \R) & & \liek  \ar[r, "\sim"] & H_2(M; \R)\\
\liek_{\Z}^* \ar[u,hook]  \ar[r, "\sim"] & H^2(M; \Z) \ar[u,hook] & & \liek_{\Z} \ar[u,hook]  \ar[r, "\sim"] & H_2(M; \Z) \ar[u,hook],
\end{tikzcd}
\end{equation}
where $\liek := \text{Lie}(\K)$ denotes the Lie algebra of $\K$, $\liek_{\Z} := \ker (\exp : \liek \to \K)$ is the kernel of the exponential map, and $\liek^*$, $\liek_{\Z}^*$ are their respective duals. The vertical homomorphisms are the natural ones, and are injections since symplectic toric manifolds have no torsion elements (we refer to \cite[chapter $7.3$]{Aud12} for more details). One can then identify $[\omega] \in H^2(M; \R)$ with an element $p \in \liek^*$. Along with the momentum map $P_{\K} : \C^n \to \liek^*$ associated with the $\K$-action, symplectic reduction yields an identification
$$
(M, \omega) \simeq (P_{\K}^{-1}(p) / \K, \overline{\omega}_{\text{std} | P_{\K}^{-1}(p)}),
$$
where $\overline{\omega}_{\text{std} | P_{\K}^{-1}(p)}$ is the symplectic form induced by the restriction of $\omega_{\text{std}}$ to $P_{\K}^{-1}(p)$. The integrality assumption implies that $p \in \liek_{\Z}^*$. In particular, its kernel is spanned by the intersection $\liek_{\Z} \cap \ker p$. Hence, applying the exponential map to $\ker p$ yields a codimension $1$ subtorus $\K_0 \subset \K$, having Lie algebra $\liek_0 := \ker p$. We will see that $P_{\K}^{-1}(p)$ is in fact a subset of a contact sphere $(S_p, \xi_{S_p} := \ker \alpha_{\text{std} | S_p}) \subset \C^n$, where $\alpha_{\text{std}} = \frac{1}{2} \underset{j=1}{\overset{n} \sum} (x_j \droit y_j - y_j \droit x_j)$, and that it is moreover the zero level set of the contact momentum map associated with the $\K_0$-action on $S_p$. By contact reduction, this yields a contact manifold
$$
(V, \xi := \ker \alpha) \simeq (P_{\K}^{-1}(p) / \K_0, \xi := \ker \overline{\alpha}_{\text{std}|S_p}),
$$
where $\overline{\alpha}_{\text{std}|S_p}$ is the contact form induced by the restriction of $\alpha_{\text{std}}$ to $S_p$. Under the above identifications, we obtain a natural prequantization bundle
$$
\pi : (V, \alpha) \overset{S^1}{\longrightarrow} (M, \omega),
$$
with fiber given by $S^1 := \K / \K_0$. This procedure is carried out in section \ref{sec3.1}.

The second step is a lifting procedure, which allows us to translate the search for $\alpha$-translated points on $V$ to that of fixed points on $\C^n$. We begin with a contact Hamiltonian $h : V \times [0,1] \to \R$. Lifting it to the contact sphere $(S_p, \xi_{S_p} = \ker \alpha_{\text{std}|S_p})$, and then to its symplectization $(\C^n \setminus \{0\}, \omega_{\text{std}|\C^n \setminus \{0\}})$, we obtain a Hamiltonian $H : \C^n \times [0,1] \to \R$, which is homogeneous of degree $2$ with respect to the standard $\R_{>0}$-action on $\C^n$, and $\K_0$-invariant. Let us denote by $\phi_h := \phi_h^1$ the time-$1$ map of the contact isotopy $\{\phi_h^t\}_{t \in [0,1]}$ of $V$, and by $\phi_H := \phi_H^1$ the time-$1$ map of the Hamiltonian isotopy $\{\phi_H^t\}_{t \in [0,1]}$ of $\C^n$ generated by $H$. Note that $\phi_H$ is $\K_0$-equivariant. The key observation is that $\alpha$-translated points of $\phi_h$ correspond to \textit{certain} fixed points of the following family of Hamiltonian symplectomorphisms:
$$
\exp(\lambda) \circ \phi_H,
$$
where $\lambda \in \liek$ varies in the Lie algebra of $\K$.\footnote{The reader shall notice the following crucial difference between our setting and that of Givental: in \cite{Giv95}, a Hamiltonian symplectomorphism of $(P_{\K}^{-1}(p) / \K, \overline{\omega}_{\text{std} | P_{\K}^{-1}(p)})$ is lifted up to a Hamiltonian symplectomorphism $\phi_H$ of $\C^n$ which is \textit{$\K$-equivariant}, and thus Givental looks for fixed points of the $\phi_H$ up to the $\K$-action on $\C^n$, that is fixed points of compositions $\exp(\lambda) \circ \Phi_H$, but there \textit{the latter are $\K$-equivariant}. In our setting, the Hamiltonian symplectomorphism $\phi_H$, and therefore the compositions $\exp(\lambda) \circ \phi_H$, are \textit{$\K_0$-equivariant}. However, instead of looking for fixed points of $\phi_H$ up to the $\K_0$-action on $\C^n$ (by making $\lambda$ vary in $\liek_0$), we must still look for fixed points \textit{up to the $\K$-action}. The reason is that the $\K$-action projects to $\K / \K_0$ on $V$, that is to the fiber of the prequantization bundle $\pi$, which is generated by the Reeb vector field of $\alpha$. In other words, we keep track of the fact that we are looking for $\alpha$-translated points of $\phi_h$, rather than fixed points. We are thus brought to considering the same family of Hamiltonian symplectomorphisms as in \cite{Giv95}, but with a different symmetry group.} This is carried out in section \ref{sec3.2}.

The third step is the construction of a generating family. In section \ref{sec3.3}, following \cite{Giv95}, we decompose the Hamiltonian isotopies $\{\phi_H^t\}_{t \in [0,1]}$ and $\{\exp(t \lambda)\}_{t \in [0,1]}$ into $2N_1$ and $2N_2$ Hamiltonian symplectomorphisms respectively  
$$
\phi_H = \phi_{2N_1} \circ \cdots \circ \phi_1 \quad \text{and} \quad \exp(\lambda) = \underset{2N_2-\text{times}}{\underbrace{\exp(\frac{\lambda}{2N_2}) \cdots \exp(\frac{\lambda}{2N_2})}},
$$
such that the graph of each symplectomorphism $\phi_j$ and $\exp(\frac{\lambda}{2N_2})$ projects diffeomorphically to the diagonal in $\C^n \times \C^n$. This allows us to define a family of \textbf{generating functions}
$$
\F_N : \C^{2nN} \times \Lambda_N \to \R, \quad \F_{\lambda}^{(N)} := \F_N(\cdot, \lambda), \quad N = N_1 + N_2,
$$
parametrized by compact subsets $\Lambda_N \subset \liek$ with boundary $\partial \Lambda_N$, such that the critical points of $\F_{\lambda}^{(N)}$ are in one-to-one correspondence with the fixed points of the Hamiltonian symplectomorphism
$$
-Id_{\C^n} \circ \exp(\lambda) \circ \phi_H. \footnote{The twist by $-Id_{\C^n}$ is technical, and will ensure the non-degeneracy of a certain quadratic form which will appear in the sequel.}
$$
The generating functions $\F_{\lambda}^{(N)}$ are homogeneous of degree $2$ with respect to the standard $\R_{>0}$-action on $\C^{2nN}$, and $\K_0$-invariant. This implies that critical points of $\F_{\lambda}^{(N)}$ appear as $\R_{>0}$-lines of $\K_0$-orbits, and moreover they all have critical value $0$. Throughout this text, we will use the term $(\R_{>0} \times \K_0)$-families to denote $\R_{>0}$-lines of $\K_0$-orbits, and the product action of $\R_{>0} \times \K_0$ on any invariant subspace of $\C^{2nN}$ will be the action induced by the linear multiplication of $\R_{>0}$ and the rotation of $\K_0$ on each factor $\C^n$.

The key observation now is that \textit{certain} $(\R_{>0} \times \K_0)$-familes of critical points of the functions $\F_{\lambda}^{(N)}$ correspond to $\alpha$-translated points of the composition $g \circ \phi_h$, where $g \in \text{Cont}_0(V,\xi)$ is the contactomorphism of $V$ induced by the restriction $-Id_{\C^n | P_{\K}^{-1}(p)}$. Since any estimate for the number of $\alpha$-translated points of all the compositions of $g$ with contactomorphisms of $\text{Cont}_0(V,\xi)$ will give rise to the same estimate for the contactomorphisms themselves, we are reduced to the search for critical points of the functions $\F_{\lambda}^{(N)}$. However, not all critical points will correspond to points on $V$. Therefore, the sublevel sets to consider are not the zero sublevel sets $\lbrace \F_{\lambda}^{(N)} \leq 0 \rbrace$, but rather must enclose the data defining the prequantization space, namely the cohomology class of $\omega$, that is $p$. Restricting the functions $\F_{\lambda}^{(N)}$ to the unit sphere $S_N \subset \C^{2nN}$ in order to take into account the $\R_{>0}$-invariance, the relevant subsets to investigate are of the form
\begin{equation}{\label{eq2}}
F_N^-(\nu) := \underset{\lambda \in \Lambda_N \cap p^{-1}(\nu)}{\bigcup} \lbrace \F_{\lambda}^{(N)} \leq 0 \rbrace \cap S^{4nN-1}, \quad \nu \in \R.
\end{equation}
These sets are $\K_0$-invariant, and one can study the following relative $\K_0$-equivariant cohomology groups, with complex coefficients:
$$
H_{\K_0}^*(F_N^-(\nu), \partial F_N^-(\nu)),
$$
where $\partial F_N^-(\nu)$ denotes the restriction of $F_N^-(\nu)$ to the boundary $\partial \Lambda_N$ of $\Lambda_N$. 

The final step in our construction is a limit process in $N \to \infty$. A large part of this paper (sections \ref{sec3.4} to \ref{sec3.6}) is devoted to finding a natural way of building a homomorphism
$$
H_{\K_0}^{*+2nN}(F_N^-(\nu), \partial F_N^-(\nu)) \to H_{\K_0}^{*+2nN'}(F_{N'}^-(\nu), \partial F_{N'}^-(\nu)),
$$
when $N \leq N'$. Note that a shift in the degree of the cohomology groups emerges naturally in this map. Under a genericity assumption on the real number $\nu$, we can apply a direct limit, and come to the definition of the following cohomology group:
$$
\Hcal_{\K_0}^*(F^-(\nu)) := \underset{N \to \infty}{\lim} H_{\K_0}^{*+2nN}(F_N^-(\nu), \partial F_N^-(\nu)).
$$
We call this limit the \textbf{cohomology of $H$ of level $\nu$}. It is the main character of this paper, and the remaining parts of the latter are devoted to its study.

\subsubsection*{Algebraic structures on the cohomology groups}

The limit $\Hcal_{\K_0}^*(F^-(\nu))$ is naturally associated with the Hamiltonian lift $H$ of $h$, and comes along with certain structures. First, note that the sets $F_N^-(\nu)$ from equation $(\ref{eq2})$ were defined by restricting the zero sublevel sets of the generating functions $\F_{\lambda}^{(N)}$ to the unit sphere $S^{4nN-1} \subset \C^{2nN}$, which takes account of the fact that critical points come in $\R_{>0}$-lines. Yet, the same limit process can be performed without this restriction. The sets
$$
\F_N^-(\nu) := \underset{\lambda \in \Lambda_N \cap p^{-1}(\nu)}{\bigcup} \lbrace \F_{\lambda}^{(N)} \leq 0 \rbrace, \quad \nu \in \R,
$$
contract onto $\Lambda_N \cap p^{-1}(\nu)$, and their restrictions $\partial \F_N^-(\nu)$ onto the boundary $\partial \Lambda_N$ of $\Lambda_N$ contract to $\partial \Lambda_N \cap p^{-1}(\nu)$. Therefore, in the limit $N \to \infty$, we obtain a cohomology group which is \textit{independent of $H$}. We will denote it by $\Hcal_{\K_0}^*(\nu)$.\footnote{The cohomology group $\Hcal_{\K_0}^*(\nu)$ is in fact also independent of $\nu$: indeed, the torus $\K_0$ acts trivially on the pair $(p^{-1}(\nu) \cap \Lambda_N, \partial \Lambda_N \cap p^{-1}(\nu))$ and therefore $H_{\K_0}^*(\Lambda_N \cap p^{-1}(\nu), \partial \Lambda_N \cap p^{-1}(\nu))$ is a free $H_{\K_0}^*(\pt)$-module of rank $1$ generated by the fundamental cocyle of the sphere $(\Lambda_N \cap p^{-1}(\nu)) / (\partial \Lambda_N \cap p^{-1}(\nu))$, whose dimension equals to $\dim \liek_0$, which is independent of $\nu$. We will also see another proof of this fact in section \ref{sec4}.}  The inclusion $(F_N^-(\nu), \partial F_N^-(\nu)) \subset (\F_N^-(\nu), \partial F_N^-(\nu))$ of pairs yields a natural homomorphism
$$
\Hcal_{\K_0}^*(\nu) \to \Hcal_{\K_0}^*(F^-(\nu)),
$$
called the \textbf{augmentation map}. Both groups are endowed with the structure of modules over the $\K_0$-equivariant cohomology of a point $H_{\K_0}^*(\pt)$, hence over the $\mathbb{T}^n$-equivariant cohomology of a point $H_{\mathbb{T}^n}^*(\pt)$ as well, via the natural homomorphism $H_{\mathbb{T}^n}^*(\pt) \to H_{\K_0}^*(\pt)$ induced by the inclusion $\K_0 \subset \mathbb{T}^n$. The augmentation map is a module homomorphism. Moreover, the Chern-Weil isomorphism (see \cite{BT13})
$$
H_{\mathbb{T}^n}^*(\pt) \simeq \C[u_1,\hdots,u_n]
$$
identifies the group $H_{\mathbb{T}^n}^*(\pt)$ with the ring of polynomials in $n$ variables $u_i$ of degree $2$, and $H_{\K_0}^*(\pt)$ with the quotient 
$$
H_{\K_0}^*(\pt) \simeq \C[u_1,\hdots,u_n] / I_0,
$$ where $I_0$ is the ideal generated by polynomials vanishing on the complexified Lie algebra $\liek_0 \otimes \C$. Equivalently, $H_{\mathbb{T}^n}^*(\pt)$ is the ring of regular functions on $\R^n \otimes \C$, where $\R^n$ is the Lie algebra of $\mathbb{T}^n$, and $H_{\K_0}^*(\pt)$ is that of regular functions on $\liek_0 \otimes \C$.

\subsubsection*{The Gysin sequence and the algebraic results}

A key novelty in this paper is the use of a so-called \textit{Gysin-type} long exact sequence for equivariant cohomology (section \ref{sec4}), relating our cohomology group $\Hcal_{\K_0}^*(F^-(\nu))$ to the one constructed by Givental in \cite{Giv95}, which is, contrary to our limit of $\K_0$-equivariant cohomology groups, obtained as a limit of $\K$-equivariant cohomology groups. Using this sequence, we will see that the cohomology group $\Hcal_{\K_0}^*(\nu)$ can in fact be viewed as the ring of regular functions on the intersection $(\liek_0 \otimes \C) \cap (\C^{\times})^n$, where $(\C^{\times})^n$ denotes the complex $n$-dimensional torus. If we denote this ring by $\mathcal{R}_0$, and by $\C[u,u^{-1}]$ the ring of Laurent polynomials in $n$-variables $u = (u_1,...,u_n)$, or equivalently the ring of regular functions on $(\C^{\times})^n$, we will prove that
$$
\Hcal_{\K_0}^*(\nu) \simeq \mathcal{R}_0 \simeq \C[u,u^{-1}] / I_0 \C[u,u^{-1}].
$$
In addition to this interpretation of $\Hcal_{\K_0}^*(\nu)$ and the aforementioned module structure, a natural isomorphism arises from the toric manifold. More precisely, for any $m \in H_2(M; \Z) \simeq \liek_{\Z}$, we have
$$
\Hcal_{\K_0}^*(F^-(\nu)) \simeq \Hcal_{\K_0}^{*+2c_1(m)}(F^-(\nu + p(m))),
$$
where $c_1$ is the first Chern class of $(M,\omega)$, identified with an element of $\liek_{\Z}^*$. The shifts by $2c_1(m)$ and $p(m)$ in the degree and the "level" of the cohomology group $\Hcal_{\K_0}^*(F^-(\nu))$ respectively, are the main ingredients in the proof of our theorem. Throughout this text, we will think of this isomorphism as a \textit{Novikov action} of $H_2(M; \Z)$, in analogy with filtered Floer homology (even though it is not a genuine action here). Similarly, $H_2(M; \Z)$ acts on the ring $\mathcal{R}_0$, which we recall is obtained through the same limit process as for the groups $\Hcal_{\K_0}^*(F^-(\nu))$, without restricting the sublevel sets of the generating functions to the unit sphere. Let $m = (m_1,...,m_n)$ denote the coordinates of $m$ through the inclusion $\liek_{\Z} \subset \R^n$. Then
$$
c_1(m) = \underset{j=1}{\overset{n} \sum}m_j.
$$
We can interpret the Novikov action on the kernel 
$$
\mathcal{J}_{\K_0}^*(F^-(\nu)) := \ker (\mathcal{R}_0 \to \Hcal_{\K_0}^*(F^-(\nu)))
$$
of the augmentation map. Of course, we first need to ensure that $\mathcal{R}_0$ is non-trivial, otherwise $\mathcal{J}_{\K_0}^*(F^-(\nu)) = \{0\}$. \textit{This is the case precisely when $\liek_0 \neq \{0\}$, that is when the symplectic toric manifold $(M, \omega, \mathbb{T})$ is different from $(\C P^{n-1}, \omega_{\normalfont{\text{FS}}}, \mathbb{T}^n / S^1)$, where $\omega_{\normalfont{\text{FS}}}$ is the Fubini-Study form, which we will assume from now on in this section}. We will see that, for any $m = (m_1,...,m_n) \in \liek_{\Z} \subset \R^n$, the action of $m$ on $\mathcal{J}_{\K_0}^*(F^-(\nu))$ is simply the multiplication by the image of the monomial $u_1^{m_1}...u_n^{m_n}$ through the quotient map $\C[u,u^{-1}] \to \mathcal{R}_0$:\footnote{Throughout this paper, we will identify a monomial of $\C[u,u^{-1}]$ with its image under the projection $\C[u,u^{-1}] \to \mathcal{R}_0$, for the sake of clarity in the notations.}
\begin{equation}{\label{eq3}}
u_1^{m_1}...u_n^{m_n} \mathcal{J}_{\K_0}^*(F^-(\nu)) \simeq \mathcal{J}_{\K_0}^{*+2c_1(m)}(F^-(\nu + p(m))).
\end{equation}

We now turn to two important properties of the augmentation map. Recall that given a contact manifold $(V, \xi)$, a contact form $\alpha$ and a contactomorphism $\phi$, the \textbf{translated spectrum} of $\phi$ with respect to $\alpha$ is the set
\begin{equation}{\label{eq4}}
\text{Spec}(\phi) := \lbrace s \in \R \ | \ \exists \ x \in V, x \text{ is an $\alpha$-translated point of $\phi$ with $\phi(x) = \phi_{\alpha}^s(x)$} \rbrace
\end{equation}
of \textit{Reeb shifts} (or \textit{Reeb time-shifts}) of $\alpha$-translated points of $\phi$.\footnote{In this paper, we will simply say translated spectrum of $\phi$, since the contact form $\alpha$ will be fixed.} Note that on a prequantization space, the translated spectrum of a contactomorphism is always periodic of period $\hbar$. Further in the paper, we will assume that $\hbar = 1$, which is equivalent to $p$ being a primitive integral vector in $\liek_{\Z}^*$. A key ingredient in our construction is that the generating functions $\F_{\lambda}^{(N)}$ are monotone in a certain direction in $\liek$. More precisely, they are \textit{decreasing in all directions on which $p$ is positive}. In particular, if $\nu_0 \leq \nu_1$ are two generic numbers, we have a natural homomorphism
$$
\Hcal_{\K_0}^*(F^-(\nu_1)) \to \Hcal_{\K_0}^*(F^-(\nu_0)),
$$
which commutes with the augmentation map. Recall that we have denoted by $g \in \text{Cont}_0(V, \xi)$ the contactomorphism of $V$ induced by the restriction $- Id_{\C^n | P_{\K}^{-1}(p)}$. The translated spectrum of $g \circ \phi_h$ is related to the groups $\Hcal_{\K_0}^*(F^-(\nu))$ by the two following analogues of \cite[Propositions $6.2$, $6.3$]{Giv95}:
\begin{propsubsec}{\label{prop1.2.1}}
Suppose that $[\nu_0,\nu_1] \cap \Spec(g \circ \phi_h) = \emptyset$. Then the homomorphism above is an isomorphism
$$\Hcal_{\K_0}^*(F^-(\nu_1)) \simeq \Hcal_{\K_0}^*(F^-(\nu_0)).$$
\end{propsubsec}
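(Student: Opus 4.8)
The plan is to run the classical Morse-theoretic mechanism — an interval of parameter values containing no critical value cannot change the (equivariant) topology of the sublevel sets — inside the relative, $\K_0$-equivariant, manifold-with-corners framework of our generating functions, and then to pass to the limit in $N$. Since $\Hcal_{\K_0}^*(F^-(\nu)) = \underset{N\to\infty}{\lim}\, H_{\K_0}^{*+2nN}(F_N^-(\nu),\partial F_N^-(\nu))$, since the homomorphism of the statement is compatible with the structure maps in $N$, and since a morphism of such (co)limits is an isomorphism as soon as it is one on a cofinal subsystem, it suffices to prove that for all large $N$ the finite-level map
$$
H_{\K_0}^{*+2nN}(F_N^-(\nu_1),\partial F_N^-(\nu_1)) \longrightarrow H_{\K_0}^{*+2nN}(F_N^-(\nu_0),\partial F_N^-(\nu_0))
$$
is an isomorphism.

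Fix such an $N$. I would introduce the ``slab'' $\mathcal{W}_N := \{(x,\lambda)\in S_N\times\Lambda_N : p(\lambda)\in[\nu_0,\nu_1],\ \F_N(x,\lambda)\leq 0\}$ with $\partial\mathcal{W}_N := \mathcal{W}_N\cap(S_N\times\partial\Lambda_N)$, and regard $(F_N^-(\nu_i),\partial F_N^-(\nu_i))$ as the two end fibres of $p\circ\mathrm{pr}_2 : \mathcal{W}_N\to[\nu_0,\nu_1]$. The finite-level map above fits into a commuting triangle with the two restriction maps $r_i : H_{\K_0}^*(\mathcal{W}_N,\partial\mathcal{W}_N)\to H_{\K_0}^*(F_N^-(\nu_i),\partial F_N^-(\nu_i))$ (after checking the compatibility of the various identifications that went into the construction), so it becomes enough to show that the two end-inclusions are $\K_0$-equivariant homotopy equivalences of pairs. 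The obstruction to this is a ``critical fibre'': a value $\nu\in[\nu_0,\nu_1]$ admitting a point $(x,\lambda)\in S_N\times\Lambda_N$ with $p(\lambda)=\nu$, $\F_N(x,\lambda)=0$, at which $\droit\F_N$ is proportional to $\droit p$ — equivalently, a critical point of the restriction of $\F_N$ to the slice $p^{-1}(\nu)$ lying on the zero level — together with the analogous data on each face of $S_N\times\partial\Lambda_N$. Here the essential input is the correspondence of Section \ref{sec3.6} (recalled in Section \ref{sec1.2}): via that correspondence, and using the $\R_{>0}$-homogeneity of $\F_\lambda^{(N)}$ to replace the sphere $S_N$ by the punctured cone over it, such a point produces a translated point of $\phi=\phi^1$ whose translation lies in $[\nu_0,\nu_1]$, contradicting $[\nu_0,\nu_1]\cap\Spec(\phi)=\emptyset$. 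Genericity of $\nu_0,\nu_1$ lets me work on the open interval, so the endpoints themselves need not be examined. Hence there is no critical fibre over $[\nu_0,\nu_1]$.

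With the critical locus empty, I would construct a $\K_0$-invariant vector field $Z$ on $\mathcal{W}_N$, tangent to $\{\F_N=0\}$ and to every face of $S_N\times\partial\Lambda_N$, along which $p$ is strictly decreasing: locally this is possible precisely because $\droit\F_N$ is nowhere proportional to $\droit p$ on $\{\F_N=0\}$ over the interval (and likewise on the boundary strata), and the local choices patch via a partition of unity since ``$\droit p(Z)<0$'' and ``tangency to $\{\F_N=0\}$'' are convex conditions; $\K_0$-invariance comes for free by averaging over $\K_0$, which preserves $\F_N$, $p$, $S_N$ and $\Lambda_N$. Since $\{\F_N=0\}$ and the boundary faces are then flow-invariant and $p$ decreases at a rate bounded away from zero, the flow of $Z$ deformation-retracts $(\mathcal{W}_N,\partial\mathcal{W}_N)$ $\K_0$-equivariantly onto $(F_N^-(\nu_0),\partial F_N^-(\nu_0))$, and the flow of $-Z$ onto $(F_N^-(\nu_1),\partial F_N^-(\nu_1))$. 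This makes both $r_i$ isomorphisms, hence the finite-level map an isomorphism, and passing to the limit over $N$ yields $\Hcal_{\K_0}^*(F^-(\nu_1))\simeq\Hcal_{\K_0}^*(F^-(\nu_0))$.

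The step I expect to be the genuine obstacle is the one linking the Morse-theoretic critical points of the restricted generating function to honest translated points: one must verify that a critical point of $\F_N$ in the slice $p^{-1}(\nu)$ lying on the level $\{\F_N=0\}$ records a translated point of $\phi$ with translation exactly $\nu$ — not merely $\nu$ modulo the period $\hbar=1$ — and that neither the auxiliary sphere $S_N$ nor the contact reduction by $\K_0$ introduces spurious critical points, including at the boundary faces. This is where the bookkeeping set up in Sections \ref{sec3.1}–\ref{sec3.6} must be used with care, and it is the true analogue of Givental's Propositions $6.2$ and $6.3$. By contrast the deformation argument is routine apart from the care needed to keep $Z$ tangent to all corner strata of $S_N\times\partial\Lambda_N$ simultaneously, and the growth of $\Lambda_N$ with $N$ is harmless because $[\nu_0,\nu_1]$ is a fixed compact interval and the part $\Lambda_N\cap p^{-1}([\nu_0,\nu_1])$ is already present for every large $N$.
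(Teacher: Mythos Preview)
Your approach is the right Morse-theoretic idea and is close in spirit to the paper's, but there is a genuine gap in how you treat the boundary $\partial\Lambda_N$.

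The assumption $[\nu_0,\nu_1]\cap\Spec(\phi)=\emptyset$ guarantees, via Proposition~\ref{prop3.3.1} and the discussion in Section~\ref{sec3.5}, that $\Gamma_N(\nu)=\Lambda_N\cap p^{-1}(\nu)$ is transversal to the front $L_N$ for every $\nu\in[\nu_0,\nu_1]$: this is your ``no interior critical fibres'' step, and it is correct. But the transversality of the \emph{boundary} $\partial\Gamma_N(\nu)$ to $L_N$ is an independent condition, not implied by the spectrum hypothesis. A tangency of $\partial\Gamma_N(\nu)$ with $L_N$ produces a critical point of $\hat p_N$ restricted to $F_N^{-1}(0)\cap(S_N\times\partial\Lambda_N)$, which need not be a critical point of $\hat p_N$ on the full $F_N^{-1}(0)$ and therefore need \emph{not} correspond to a translated point of $\phi$. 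This boundary transversality is precisely the ``genericity'' of $\nu$, which holds only on a full-measure subset of $\R$, not on all of $[\nu_0,\nu_1]$. Hence your vector field $Z$ may fail to be constructible tangent to the boundary faces at certain intermediate $\nu$, and the slab deformation breaks down there.

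The paper fixes exactly this point by a local-plus-covering argument rather than a single global deformation. For each generic $\nu\in[\nu_0,\nu_1]$ both $\Gamma_N(\nu)$ and $\partial\Gamma_N(\nu)$ are transversal to $L_N$; since transversality is open, both conditions persist on some $[\nu-\epsilon,\nu+\epsilon]$, and Proposition~\ref{prop2.1.3} then yields a $\K_0$-equivariant isotopy of pairs $(F_N^-(\nu+\epsilon),\partial F_N^-(\nu+\epsilon))\simeq(F_N^-(\nu-\epsilon),\partial F_N^-(\nu-\epsilon))$. Passing to the limit in $N$ gives $\Hcal_{\K_0}^*(F^-(\nu+\epsilon))\simeq\Hcal_{\K_0}^*(F^-(\nu-\epsilon))$; since generic $\nu$ are dense, finitely many such intervals cover $[\nu_0,\nu_1]$, and composing the resulting isomorphisms yields the statement. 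Your argument is easily repaired by adopting this covering strategy in place of the single slab.
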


\begin{propsubsec}{\label{prop1.2.2}}
Suppose that the segment $[\nu_0,\nu_1]$ contains only one value $\nu \in \Spec(g \circ \phi)$, which corresponds to a finite number of $\alpha$-translated points. Let $v \in H_{\K_0}^*(\pt)$ be an element of positive degree, and $q \in \mathcal{R}_0$. Suppose that $q \in \mathcal{J}_{\K_0}^*(F^-(\nu_0))$. Then $v q \in \mathcal{J}_{\K_0}^*(F^-(\nu_1))$.
\end{propsubsec}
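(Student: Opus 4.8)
The plan is to run the diagram chase underlying the proof of \cite[Proposition 6.3]{Giv1995}, the only genuinely new ingredient being a freeness argument that absorbs the extra Reeb circle. Denote by $\varepsilon_\nu \colon \mathcal{R}_0 \to \Hcal_{\K_0}^*(F^-(\nu))$ the augmentation map and by $r \colon \Hcal_{\K_0}^*(F^-(\nu_1)) \to \Hcal_{\K_0}^*(F^-(\nu_0))$ the natural homomorphism; by naturality both are maps of $H_{\K_0}^*(\pt)$-modules and $r \circ \varepsilon_{\nu_1} = \varepsilon_{\nu_0}$. Since $\mathcal{J}_{\K_0}^*(\nu) = \ker \varepsilon_\nu$ and $\varepsilon_{\nu_1}$ is $H_{\K_0}^*(\pt)$-linear, the conclusion $vq \in \mathcal{J}_{\K_0}^*(\nu_1)$ is equivalent to $v \cdot \varepsilon_{\nu_1}(q) = 0$. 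As $q \in \mathcal{J}_{\K_0}^*(\nu_0)$ we have $r\bigl(\varepsilon_{\nu_1}(q)\bigr) = \varepsilon_{\nu_0}(q) = 0$, so $\varepsilon_{\nu_1}(q) \in \ker r$; it is therefore enough to show that $v$ annihilates $\ker r$.

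To control $\ker r$ I would use the long exact cohomology sequence governing the passage of the generating-function sublevel sets through the single critical value $\nu$. For $\nu_0 < \nu < \nu_1$ with $\nu_0, \nu_1 \notin \Spec(\phi)$, set $B_N(\nu) := \{(x,\lambda) \in S_N \times \Lambda_N : \F_N(x,\lambda) \le 0,\ p(\lambda) \le \nu\}$; this space deformation retracts onto $F_N^-(\nu)$, so $r$ becomes the restriction map and, after the limit $N \to \infty$ (the bookkeeping of section \ref{sec3.6}), one obtains a long exact sequence of $H_{\K_0}^*(\pt)$-modules
\[
\cdots \longrightarrow G_\nu \xrightarrow{j_*} \Hcal_{\K_0}^*(F^-(\nu_1)) \xrightarrow{r} \Hcal_{\K_0}^*(F^-(\nu_0)) \longrightarrow \cdots .
\]
Since $\nu$ is the only point of $\Spec(\phi)$ in $[\nu_0,\nu_1]$, the space $B_N(\nu_1)$ is obtained from $B_N(\nu_0)$ by attaching cells along the critical sets of $\F_\lambda^{(N)}$ associated with the finitely many $\alpha$-translated points $x$ of $\phi$ of shift $\nu$; hence, by excision, the relative term decomposes as $G_\nu \cong \bigoplus_x H_{\K_0}^*\bigl(U_x, U_x \setminus Z_x\bigr)$, where $Z_x \subset S_N$ is the critical set attached to $x$ and $U_x$ a $\K_0$-invariant tubular neighbourhood of it. As $\ker r = \operatorname{im} j_*$, it now suffices to prove that $v \cdot G_\nu = 0$.

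This last point is where the geometry of $V$ enters. Because $x$ is an isolated translated point of $\phi$ on $V$, and both $\K_0$ and the Reeb circle act freely, the critical set $Z_x$ is a single free $\K_0$-orbit, at worst thickened by the residual Reeb circle; in particular $Z_x/\K_0$ is a manifold of dimension at most $1$, and $U_x$ deformation retracts $\K_0$-equivariantly onto $Z_x$. Consequently the $H_{\K_0}^*(\pt)$-module structure on each summand $H_{\K_0}^*(U_x, U_x \setminus Z_x)$ factors through the characteristic map $H_{\K_0}^*(\pt) = H^*(B\K_0) \to H_{\K_0}^*(U_x) \cong H^*(Z_x/\K_0)$. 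Every positive-degree element of $H^*(B\K_0)$ sits in even degree $\ge 2$, whereas $H^{\ge 2}(Z_x/\K_0) = 0$ for dimension reasons, so $v$ acts as $0$ on every summand, i.e.\ $v \cdot G_\nu = 0$. Finally, writing $\varepsilon_{\nu_1}(q) = j_*(\beta)$ with $\beta \in G_\nu$ and using the $H_{\K_0}^*(\pt)$-linearity of $j_*$ gives $v \cdot \varepsilon_{\nu_1}(q) = j_*(v\beta) = 0$, which is the claim.

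The main obstacle is to make the exact sequence of the second paragraph rigorous. The sets $F^-(\nu)$ are the fibres $p^{-1}(\nu)$ of a single space rather than honestly nested subsets, so one must first realize $r$ through the sublevel sets $B_N(\nu)$, then carry the whole picture through the inverse limit in $N$ compatibly with the stabilization maps and the module structures, and — the delicate part — verify that crossing $\nu$ attaches only the local Morse data supported on the free orbits $Z_x$, with no spurious contribution from the boundary $S_N \times \partial \Lambda_N$ or from the degree-$2$ homogeneity constraint. Once an equivariant Morse lemma along each orbit $Z_x$ is in place, exactly as in \cite{Giv1995}, the freeness argument closes the proof; alternatively, one can deduce the statement from its toric counterpart in \cite{Giv1995} via the Gysin-type long exact sequence of section \ref{sec4}.
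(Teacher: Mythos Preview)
Your approach is essentially the paper's: run the long exact sequence of the pair, localize the relative term at the critical orbit, and use that $\K_0$ acts freely there to kill positive-degree elements of $H_{\K_0}^*(\pt)$. The logic of the diagram chase is identical.

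Where the paper is cleaner is in the setup. Rather than introducing the sublevel sets $B_N(\nu)$ and then worrying about boundary contributions and compatibility with the limit (your final paragraph), the paper works directly at a fixed finite $N$ and uses the gradient-flow deformation of Proposition~\ref{prop2.1.3} to realize $(F_N^-(\nu_0),\partial F_N^-(\nu_0))$ as embedded in $(F_N^-(\nu_1),\partial F_N^-(\nu_1))$, as the complement of a neighborhood of the critical $\K_0$-orbit of $\hat p_N$. The long exact sequence is then simply that of the triple $(F_N^-(\nu_1),F_N^-(\nu_0),\partial F_N^-(\nu_1))$, and there is no ``main obstacle'' to overcome.

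Two small clarifications. First, there is no ``residual Reeb circle'' to thicken $Z_x$: the Reeb direction on $V$ is absorbed into the parameter $\lambda$ via $p(\lambda)$, so the critical set of $\hat p_N$ over a single translated point is exactly a free $\K_0$-orbit in $S_N$. Second, the vanishing is slightly more direct than your dimension count: since $\K_0$ acts freely near the orbit, $H_{\K_0}^*(F_N^-(\nu_1),F_N^-(\nu_0))$ is ordinary cohomology of the quotient, and the $H_{\K_0}^*(\pt)$-module structure factors through $H_{\K_0}^*(\text{orbit})\cong H^*(\pt)=\C$, so any positive-degree $v$ already maps to zero before cupping.
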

Proposition \ref{prop1.2.1} holds in the limit $N \to \infty$, however, we will see that, even if $N$ is big, it is \textit{not true} that given any two generic numbers $\nu_0 \leq \nu_1$ such that $[\nu_0, \nu_1] \cap \text{Spec}(g \circ \phi_h) = \emptyset$, the groups $H_{\K_0}^*(F_N^-(\nu_1), \partial F_N^-(\nu_1))$ and $H_{\K_0}^*(F_N^-(\nu_0), \partial F_N^-(\nu_0))$ are isomorphic. Similar statements as Proposition \ref{prop1.2.2} (in finite dimensional settings) are used for instance in \cite[Proposition $5.2$ (c)]{The98}, \cite[section $8$]{Giv90}, and \cite[Proposition $4.8$]{GKPS17}.
The second statement can be rephrased in terms of the following commutative diagram:
$$
\begin{tikzcd}
q \in \mathcal{R}_0 \ar[r] \ar[dr] & \Hcal_{\K_0}^*(F^-(\nu_1)) \ar[d] \ni q_1\\
& \Hcal_{\K_0}^*(F^-(\nu_0)) \ni q_0.
\end{tikzcd}
$$
Here, $q_1$ and $q_0$ are the images of an element $q \in \mathcal{R}_0$ by the augmentation maps $\mathcal{R}_0 \to \Hcal_{\K_0}^*(F^-(\nu_1))$ and $\mathcal{R}_0 \to \Hcal_{\K_0}^*(F^-(\nu_0))$ respectively. The above proposition states the following:
$$
q_0 = 0 \implies vq_1 = 0 \quad \text{for all} \quad v \in H_{\K_0}^*(\pt), \quad \text{deg}(v) > 0.
$$
In other words, there cannot be a non-zero element in $\Hcal_{\K_0}^*(F^-(\nu_1))$ which was trivial in $\Hcal_{\K_0}^*(F^-(\nu_0))$, and is still non-zero when multiplied by a positive degree element of $H_{\K_0}^*(\pt)$.

\textit{We now assume that $(M,\omega)$ is monotone and $\omega$ is primitive}. We have
$$
p = \frac{c_1}{N_M}.
$$
In this situation, the kernel $\mathcal{J}_{\K_0}^*(F^-(\nu))$ admits in some sense \textit{elements of minimal degree}:
\begin{propsubsec}{\label{prop1.2.3}}
There exists $q \in \mathcal{R}_0$, such that $q \notin \mathcal{J}_{\K_0}^*(F^-(\nu))$, but $u_i q \in \mathcal{J}_{\K_0}^*(F^-(\nu))$ for all $i=1,...,n$.
\end{propsubsec}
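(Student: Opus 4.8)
The plan is to translate the statement into a structural fact about the image of the augmentation map, and then to exhibit $q$ as a lift of the top Poincar\'e-duality class of $M$. Write $\mathcal{M}:=\mathcal{R}_0/\mathcal{J}_{\K_0}^*(F^-(\nu))$, which is precisely the image of the augmentation $\mathcal{R}_0\to\Hcal_{\K_0}^*(F^-(\nu))$; through the homomorphism $H_{\mathbb{T}^n}^*(\pt)\to H_{\K_0}^*(\pt)$ it is a graded module over $H_{\mathbb{T}^n}^*(\pt)=\C[u_1,\dots,u_n]$. With this notation the proposition is equivalent to the assertion that \emph{$\mathcal{M}$ contains a nonzero homogeneous element killed by every $u_i$}: any preimage in $\mathcal{R}_0$ of such a class is the desired $q$.

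The core of the argument is therefore to identify $\mathcal{M}$ as an $H_{\mathbb{T}^n}^*(\pt)$-module. Here I would use the Gysin-type long exact sequence of section~\ref{sec4} to relate $\Hcal_{\K_0}^*(F^-(\nu))$, and hence $\mathcal{M}$, to the relative cohomology groups that Givental attaches in \cite{Giv1995} to the symplectic toric manifold $(M,\omega)$; Givental's computation for rational toric manifolds — which is exactly where the Arnold-conjecture input, the non-vanishing $\epsilon_\nu(1)\neq 0$ of the fundamental class, is used — should yield an identification of the form
$$
\mathcal{M}\ \cong\ \bigoplus_{k\in\Z} H^{*+2kN_M}(M;\C),
$$
in which each $u_i$ acts on every summand by ordinary cup product with the $i$-th toric divisor class $D_i\in H^2(M;\Z)$, while the Novikov action of the class $m_0\in H_2(M,\Z)$ with $p(m_0)=1$ (which exists since $p$ is primitive) and $c(m_0)=N_M$ (since $p=c/N_M$) realizes the shift between consecutive summands. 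This is the step where monotonicity is essential: it makes the Novikov degree shift equal to the single integer $2N_M$, so that the cohomology of $M$ folds up into a $\Z/2N_M$-graded object which is finite-dimensional in each degree. It is also where the hypothesis $M\neq\C P^n$ enters, since otherwise $\liek_0=\{0\}$, $H_{\K_0}^*(\pt)=\C$, the $u_i$ map to zero, and the module structure degenerates.

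Granting this identification, the proposition follows by degree bookkeeping. Let $2d=\dim M$ and let $[\pt]\in H^{2d}(M;\C)$ be the class of a point, which is nonzero; regard it as an element of the $k=0$ summand of $\mathcal{M}$ and let $q\in\mathcal{R}_0$ be any augmentation preimage. Since $[\pt]\neq 0$ in $\mathcal{M}$ we get $q\notin\mathcal{J}_{\K_0}^*(F^-(\nu))$; and since $u_i\cup[\pt]\in H^{2d+2}(M;\C)=0$ for every $i$ we get $u_i q\in\mathcal{J}_{\K_0}^*(F^-(\nu))$, which is the claim. (Equivalently, and avoiding the full description of $\mathcal{M}$, one may take for $q$ a preimage of a nonzero homogeneous element of top internal degree in the cyclic submodule $H_{\mathbb{T}^n}^*(\pt)\cdot\epsilon_\nu(1)\subseteq\mathcal{M}$, which is a finite-dimensional graded Poincar\'e-duality algebra — a quotient of $H^*(M;\C)$ — hence has a one-dimensional socle on which every positive-degree element, in particular every $u_i$, acts by zero.)

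The main obstacle is the identification of $\mathcal{M}$ in the displayed form, which is the business of section~\ref{sec5}: one must push the Gysin sequence of section~\ref{sec4} far enough to control not merely the underlying graded vector space but the whole $\C[u_1,\dots,u_n]$-module structure — in particular to see that the $u_i$ act by \emph{ordinary}, not quantum, cup product with the toric divisor classes — and one must check that the monotone relation $p=c/N_M$ turns the Novikov action into the clean degree-$2N_M$ shift that produces the finiteness, and hence the socle. The remaining steps are elementary once that is in place.
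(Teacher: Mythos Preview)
Your plan has a structural gap at the very first step. The Gysin sequence of section~\ref{sec4} compares $\K_0$- and $\K$-equivariant cohomology of the \emph{same} space, so it requires a $\K$-action; but for a general contact Hamiltonian $h$ the lift $\widetilde{H}$ is only $\K_0$-invariant, hence $F_N^-(\nu)$ carries no $\K$-action and there is no $\Hcal_{\K}^*(F^-(\nu))$ to feed into a Gysin sequence. Section~\ref{sec4} runs that argument only for the trivial Hamiltonian, where the generating functions are the $\K$-invariant quadratic forms $\G_\lambda^{(N)}$. Consequently there is no route from the Gysin sequence to an identification of $\mathcal{M}=\mathcal{R}_0/\mathcal{J}_{\K_0}^*(F^-(\nu))$ with copies of $H^*(M;\C)$, and your claim that the $u_i$ act by \emph{ordinary} cup product with the toric divisors is neither proved nor, in general, correct: already in Givental's $\K$-equivariant picture the relations are of quantum (Batyrev) type. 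The fallback you sketch --- that the cyclic $\C[u]$-submodule generated by $1$ is a finite-dimensional Poincar\'e-duality algebra --- is likewise unjustified; you have not even shown that $1\notin\mathcal{J}_{\K_0}^*(F^-(\nu))$, which indeed fails in non-monotone examples (Remark~\ref{rem3.1.1} and the discussion before the proposition).

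The paper bypasses any description of $\mathcal{M}$. It sandwiches $\widetilde{H}$ between two quadratic Hamiltonians of Reeb-flow type, obtaining inclusions $\mathcal{J}_{r_+}^0\subset\mathcal{J}_{\K_0}^*(F^-(\nu))\subset\mathcal{J}_{r_-}^0$ where $\mathcal{J}_r^0\subset\mathcal{R}_0$ is the explicit projection of $J_r=\langle u^{\iota(m)}:m\in\liek_\Z,\ p(m)\ge r\rangle$. Monotonicity $p=c/N_M$ forces $J_r\subset\C[u,u^{-1}]^{\ge rN_M}$, so after a Novikov shift $u^b$ one has $1\notin u^b\mathcal{J}_{r_-}^0$, hence $1\notin u^b\mathcal{J}_{\K_0}^*(F^-(\nu))$. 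On the other side, Givental's result that $Z(\C[u]\cap J_r+I)\subset\{0\}$ (\cite[Proposition~1.2]{Giv1995}) together with $I_0\supset I$ and the Nullstellensatz shows that all monomials of sufficiently high degree lie in $\C[u]\cap J_{r_++r_0}+I_0\subset\mathrm{pr}^{-1}(u^b\mathcal{J}_{r_+}^0)\subset\mathrm{pr}^{-1}(u^b\mathcal{J}_{\K_0}^*(F^-(\nu)))$. So the set of monomials $u^a\in\C[u]$ with $\mathrm{pr}(u^a)\notin u^b\mathcal{J}_{\K_0}^*(F^-(\nu))$ is nonempty and of bounded degree; a maximal-degree one gives $q=u^{a-b}$. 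The moral is close to yours --- one is locating a ``top'' class --- but the mechanism is a sandwich-and-finiteness argument, not a computation of $\mathcal{M}$.
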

In contrast to the symplectic case \cite[Corollary $1.3$]{Giv95}, where elements of minimal degree always exist, the monotonicity assumption \textit{cannot be lifted} here, since in general, one might have 
$$
\mathcal{J}_{\K_0}^*(F^-(\nu)) = \mathcal{R}_0.
$$ 
This happens for instance when $(M, \omega, \mathbb{T}) = (\C P^{n-1}, \omega_{\text{FS}}, \mathbb{T}^n / S^1)$, in which case $\mathcal{R}_0 = \{0\}$. We will see another less trivial example of such an event in section \ref{sec5}.

\subsubsection*{Proof of the main theorem}
%\addcontentsline{toc}{subsection}{Proof of the main theorem}

The above discussion allows us to prove our main result, Theorem \ref{theo1.1.1}.

\begin{proof}
We adapt the proof of \cite{Giv95} to the contact setting. We work with the prequantization space $(V, \xi := \ker \alpha)$ constructed by the procedure described above, and a contactomorphism $\phi_h \in \text{Cont}_0(V,\xi)$. For any generic $\nu \in \R$, we have a cohomology group $\Hcal_{\K_0}^*(F^-(\nu))$, and the kernel $\mathcal{J}_{\K_0}^*(F^-(\nu))$ of the augmentation map, which satisfies Proposition \ref{prop1.2.3}. Suppose that $\nu \notin \Spec(g \circ \phi_h)$. Since $\Spec(g \circ \phi_h)$ is periodic of period $1$, it is enough to count the number of elements of $\Spec(g \circ \phi_h)$ between $\nu$ and $\nu + 1$. To every such element there corresponds at least one $\alpha$-translated point of $g \circ \phi_h$ on $V$. Therefore, to one $\alpha$-translated point of $g \circ \phi_h$ there correspond $l$ elements of the spectrum between $\nu$ and $\nu + l$, where $l \in \mathbb{N}$. Assume that $g \circ \phi_h$ has a finite number of $\alpha$-translated points. Let $m \in \liek_{\Z} \setminus \{0\}$ be such that $\iota(m) =  (m_1,...,m_n) \in \R_{\geq 0}^n$. We will see that $p(m) > 0$. Suppose that the number $\#$ of elements in $\Spec(g \circ \phi_h)$ between $\nu$ and $\nu + p(m)$ is strictly less than $c_1(m) = \underset{j=1}{\overset{n} \sum} m_j$. Let $q \in \mathcal{R}_0$ be such that $q \notin \mathcal{J}_{\K_0}^*(F^-(\nu))$, but $u_i q \in \mathcal{J}_{\K_0}^*(F^-(\nu))$ for all $i = 1,...,n$. Since $\# < c_1(m)$, Propositions \ref{prop1.2.1} and \ref{prop1.2.2} imply that $u_1^{m_1}...u_n^{m_n}q \in \mathcal{J}_{\K_0}^{*+2c_1(m)}(F^-(\nu+p(m)))$. This is precisely the Novikov action of $m$ (equation $(\ref{eq3})$), which is an isomorphism
$$
\mathcal{J}_{\K_0}^*(F^-(\nu)) \overset{u_1^{m_1}...u_n^{m_n}}{\overset{\sim} \longrightarrow} \mathcal{J}_{\K_0}^{*+2c_1(m)}(F^-(\nu+p(m))).
$$
In particular, $u_1^{m_1}...u_n^{m_n}q \notin \mathcal{J}_{\K_0}^{*+2c_1(m)}(F^-(\nu+p(m)))$, which is a contradiction. This means that the number $\#$ of elements of $\Spec(g \circ \phi_h)$ between $\nu$ and $\nu + p(m)$ is not less than $c_1(m)$, and thus the number of $\alpha$-translated points of $g \circ \phi_h$ is not less than $N_M$. 

To summarize, we have proved that for any $\phi_h \in \text{Cont}_0(V,\xi)$, if the number of $\alpha$-translated points of $g \circ \phi_h$ is finite, then it is at least $N_M$. Therefore, we have shown that for any $\phi_h \in \text{Cont}_0(V,\xi)$ the number of $\alpha$-translated points of $g \circ \phi_h$ is either infinite or at least $N_M$. Since this holds for any contactomorphism of $\text{Cont}_0(V,\xi)$, it holds also for $g^{-1} \circ \phi_h$, for any contact Hamiltonian $h$. Thus, the number of $\alpha$-translated points of $\phi_h$ is at least $N_M$.
\end{proof}

\noindent \textbf{{\large Discussion.}} \normalfont In \cite{Giv95}, Givental constructed the generating functions as a finite dimensional approximation of action functionals defined on the free loop space of $\C^n$ (see \cite[section $2$]{Giv95}). Following such an analogy, one could think of the limit $\Hcal_{\K_0}^*(F^-(\nu))$ as a kind of $\R$-filtered Floer cohomology group associated with the contactomorphism $\phi_h$. There exist several Floer-type constructions that pertain to translated points, see for instance \cite{AM13}, \cite{MU17}. Moreover, there are at least two ongoing projects, by Albers, Shelukhin and Zapolsky, and Leclercq and Sandon, concerned with comparable constructions and applications. Note that, even in the symplectic setting of \cite{Giv95}, building a precise relation between Floer homology and the limit of equivariant cohomology groups seems highly complicated, for Floer's construction is of much different nature than Givental's. Still, we observe a similar behavior, at least in the study of translated points. In addition, our limit has the very convenient property of being defined for any contactomorphism of $\text{Cont}_0(V,\xi)$, and our result holds in this level of generality. Another remark is that we currently don't know if the limit $\Hcal_{\K_0}^*(F^-(\nu))$ is non-trivial over a non-monotone base. If it is trivial, the above analogy raises the question of the existence of Floer homology groups for prequantization spaces over non-monotone symplectic manifolds.\\

\noindent \textbf{{\large Acknowledgements.}} I would like to warmly thank my advisor Frol Zapolsky for his support throughout this project. His professionalism and patience were crucial in every step of this work. I would also like to thank Leonid Polterovich and Michael Entov for listening to a preliminary version of this work and their interest, and Alexander Givental for his generous help regarding the paper \cite{Giv95}. I would like to thank the participants of the workshop Floer homology and contact topology, held at the University of Haifa in September $2017$, for their questions and attention during my talk about this project. Finally, I would like to thank the organizers and participants of the seminar Symplectix at the Institut Henri Poincaré in Paris, for their invitation and interest during my talk in January $2019$, more particularly Vincent Humilière, Alexandru Oancea, and Claude Viterbo. I was supported by grant number $1825/14$ from the Israel Science Foundation.

\section{Preliminaries}{\label{sec2}}

\subsection{Fronts and deformation of sublevel sets}{\label{sec2.1}}

The main symplectic ingredient of our construction is a family of generating functions associated with the lift of a contactomorphism (section \ref{sec3.3}). We will study suitable sublevel sets of these functions, and more particularly how they behave under certain deformations. We describe here the general setting and relevant results. This section is added only for the sake of completeness; we mainly follow \cite[section $3$]{Giv95}, except for some notations and statements that are adapted to this paper. Let $F : X \times \Lambda \to \R$ be a family of functions $F_{\lambda}$ on a compact manifold $X$, where $\Lambda$ is a compact parametrizing manifold with boundary $\partial \Lambda$. Suppose that $F \in C^{1,1}$, that is $F$ is differentiable with Lipschitz derivatives (so that gradient flow deformations apply), and $F$ is smooth at all points $(x,\lambda)$ such that $x$ is a critical point of $F_{\lambda}$ with critical value zero. We consider restrictions of $F$ to submanifolds $\Gamma \subset \Lambda$ with boundary $\partial \Gamma = \Gamma \cap \partial \Lambda$, and look at the sublevel sets
\begin{equation}{\label{eq5}}
F_{\Gamma}^- := \lbrace (x,\lambda) \in X \times \Lambda \ | \ \lambda \in \Gamma, F(x,\lambda) \leq 0 \rbrace, \quad  
F_{\partial \Gamma}^- := \lbrace (x,\lambda) \in X \times \Lambda \ | \ \lambda \in \partial \Gamma, F(x,\lambda) \leq 0 \rbrace.
 \end{equation}
We assume moreover that $0$ is a regular value of $F$. The \textbf{front of $F$} is defined as
$$
L := \lbrace \lambda \in \Lambda \ | \ 0 \text{ is a singular value of } F_{\lambda} \rbrace.
$$
It is the singular locus of the projection
$$F^{-1}(0) \to \Lambda,$$
and since $F$ is smooth at all critical points $(x, \lambda)$ such that $x$ is a critical point of $F_{\lambda}$ with critical value zero, it is of \textit{zero-measure}, due to Sard's lemma. To understand its structure, it is convenient to view it as a singular hypersurface in $\Lambda$, provided at every point with tangent hyperplanes (there can be more than one hyperplane at each point). The construction goes as follows: let $N$ be a smooth neighborhood in $X \times \Lambda$ of the set of critical points of $F$ with critical value $0$. The zero set $F_{|N}^{-1}(0) \subset X \times \Lambda$ gives rise naturally to a Legendrian submanifold

$$\mathcal{L} := \lbrace (x,\lambda,T_{(x,\lambda)}F_{|N}^{-1}(0)) \ |\  (x,\lambda) \in N \rbrace \subset PT^*(X \times \Lambda)$$
of the space $PT^*(X \times \Lambda)$ of contact elements of $X \times \Lambda$. Let $\mathcal{P}$ denote the space of vertical contact elements (also called the \textit{mixed space}, see \cite{AN01}): a hyperplane $H_{(x,\lambda)}$ is in $\mathcal{P}$ if and only if $T_x X \times \{ \lambda \} \subset H_{(x,\lambda)}$. Then the intersection $ \hat{\mathcal{L}} := \mathcal{L} \cap \mathcal{P}$ projects to a singular Legendrian submanifold $\hat{L} \subset PT^*(\Lambda)$ in the space of contact elements $PT^*(\Lambda)$ of $\Lambda$. The front $L$ is then defined as the projection of $\hat{L}$ to the base $\Lambda$, whereas a tangent hyperplane at a point $\lambda \in L$ is an element of $\hat{L} \cap PT^*_{\lambda} (\Lambda)$.\footnote{The term \textit{front} comes from the study of wave fronts in geometrical optics. Given a Legendrian submanifold $L$ of a Legendrian fibration $E \to B$, the front of $L$ is defined as the projection of $L$ to the base $B$ (see \cite[chapter $5$]{AN01}). If $L$ projects injectively onto its front, the latter is also called the \textit{generating hypersurface} of $L$. In our case $E = PT^*(\Lambda)$, and the (singular) Legendrian submanifold $\hat{L}$ is the projection of the (singular) intersection $\mathcal{L} \cap \mathcal{P} \subset PT^*(X \times \Lambda)$ to $PT^*(\Lambda)$. The zero set $F^{-1}(0) \subset X \times \Lambda$ is called a \textit{family of generating hypersurfaces} for $\hat{L}$. Note that if $\mathcal{L}$ was transversal to $\mathcal{P}$, $\hat{L}$ would be an immersed Legendrian submanifold in $PT^*(\Lambda)$. In fact, one can show that any immersed Legendrian submanifold of a space of contact elements can be obtained via such a procedure.}

\begin{propsubsec}[\cite{Giv95} Proposition $3.1$]{\label{prop2.1.1}}
A submanifold $\Gamma \subset \Lambda$ is transversal to $L$ if and only if the hypersurface $F^{-1}(0)$ is transversal to $X \times \Gamma$.
\begin{proof}
$X \times \Gamma$ is tangent to $F^{-1}(0)$ at a point $(x,\lambda) \in X \times \Lambda$ if and only if $T_{(x,\lambda)} (X \times \Gamma) \subset T_{(x,\lambda)} F^{-1}(0)$, if and only if $T_{(x,\lambda)} (X \times \Gamma) \subset \hat{\mathcal{L}}$, if and only if $T_{\lambda} \Gamma \subset \hat{L} \cap PT_{\lambda}^*(\Lambda)$.
\end{proof}
\end{propsubsec}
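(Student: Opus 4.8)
The statement is local and pointwise over $\Lambda$, so the plan is to verify the equivalence fiber by fiber, translating the defining condition of the front $L$ into a condition on the smooth hypersurface $F^{-1}(0)$. The starting observation is the one underlying the construction of $L$: if $x$ is a critical point of $F_\lambda$ with critical value $0$, then $\droit F_\lambda(x) = 0$, so the differential of $F$ at $(x,\lambda)$ kills the whole vertical subspace $T_x X \times \{0\}$; since $0$ is a regular value of $F$, the hyperplane $T_{(x,\lambda)} F^{-1}(0)$ therefore contains $T_x X \times \{0\}$, i.e. it is a vertical contact element lying in the mixed space $\mathcal{P}$. Hence it can be written as $T_x X \times H_\lambda$ for a unique hyperplane $H_\lambda \subset T_\lambda \Lambda$, and by the very definitions of $\hat{\mathcal{L}} = \mathcal{L}\cap\mathcal{P}$, of $\hat L$, and of $L$, this $H_\lambda$ is one of the tangent hyperplanes of the front $L$ at $\lambda$; conversely every tangent hyperplane of $L$ at $\lambda$ arises in this way from some critical point of $F_\lambda$ in the fiber over $\lambda$. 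On the other hand, if $x$ is not a critical point of $F_\lambda$, then $\droit F_\lambda(x)\neq 0$, so $T_{(x,\lambda)} F^{-1}(0)$ does not contain $T_x X \times \{0\}$ and is not vertical.

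Next I would unwind the two transversality conditions using that $F^{-1}(0)$ is a hypersurface in $X\times\Lambda$ and that $L$ is (branchwise) a hypersurface in $\Lambda$, so that transversality to a submanifold is equivalent to that submanifold's tangent space not being contained in the relevant hyperplane. Over a point $\lambda \notin L$ there is nothing to check: $\Gamma$ meets $L$ trivially there, and for every $(x,\lambda)\in F^{-1}(0)$ the hyperplane $T_{(x,\lambda)} F^{-1}(0)$ is non-vertical, hence cannot contain $T_{(x,\lambda)}(X\times\Gamma) = T_x X \times T_\lambda\Gamma \supset T_x X\times\{0\}$, so $F^{-1}(0)$ is automatically transversal to $X\times\Gamma$ there. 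Over a point $\lambda \in L$, consider $(x,\lambda)\in F^{-1}(0)\cap (X\times\Gamma)$ with $x$ critical for $F_\lambda$: then $T_{(x,\lambda)}(X\times\Gamma) = T_x X\times T_\lambda\Gamma$ is contained in $T_{(x,\lambda)}F^{-1}(0) = T_x X\times H_\lambda$ if and only if $T_\lambda\Gamma \subset H_\lambda$; at points with $x$ non-critical, transversality is again automatic. Quantifying over all critical points of $F_\lambda$ in the fiber over $\lambda$ — equivalently over all tangent hyperplanes $H_\lambda$ of $L$ at $\lambda$ — this says exactly that $F^{-1}(0)$ fails to be transversal to $X\times\Gamma$ somewhere over $\lambda$ if and only if $T_\lambda\Gamma$ lies inside some tangent hyperplane of $L$ at $\lambda$, which is precisely the failure of $\Gamma$ to be transversal to $L$ at $\lambda$.

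Assembling these local equivalences over all $\lambda\in\Lambda$ proves the proposition. I do not anticipate a real obstacle: the argument is a formal manipulation of tangent spaces through the Legendrian lift $\hat{\mathcal L}$. The only point that needs care is the possible multivaluedness of the front — at a $\lambda$ lying in the image of several critical points of $F_\lambda$, the front carries several tangent hyperplanes, and transversality of $\Gamma$ to $L$ has to be interpreted as transversality to each branch; this matches exactly the requirement that $F^{-1}(0)$ be transversal to $X\times\Gamma$ at \emph{every} point of the fiber over $\lambda$, so the bookkeeping closes up.
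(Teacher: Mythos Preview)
Your proof is correct and follows essentially the same approach as the paper: both arguments identify the failure of transversality of $X\times\Gamma$ to $F^{-1}(0)$ at $(x,\lambda)$ with the containment $T_{(x,\lambda)}(X\times\Gamma)\subset T_{(x,\lambda)}F^{-1}(0)$, observe that this forces the tangent hyperplane to lie in $\hat{\mathcal L}=\mathcal L\cap\mathcal P$, and then project to $\hat L\subset PT^*(\Lambda)$ to obtain the equivalent condition $T_\lambda\Gamma\subset H_\lambda$ for some tangent hyperplane $H_\lambda$ of the front. The paper compresses this into a single chain of iff's, whereas you spell out the critical/non-critical dichotomy and the multivaluedness of the front explicitly, but the content is the same.
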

\begin{corsubsec}[\cite{Giv95} Corollary $3.2$]{\label{cor2.1.1}}
Let $\Gamma_t := \rho^{-1}(t)$ be non-singular levels of some smooth function $\rho : \Lambda \to \R$. Then almost all $\Gamma_t$ are transversal to $L$. \qed
\end{corsubsec}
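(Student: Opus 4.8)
The statement to prove is Corollary \ref{cor2.1.1}: given $\rho:\Gamma\to\R^m$ smooth with $\Gamma_t=\rho^{-1}(t)$ its non-singular levels, almost all $\Gamma_t$ are transversal to the front $L$.

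The plan is to reduce this to a Sard-type argument combined with Proposition \ref{prop2.1.1}. By Proposition \ref{prop2.1.1}, $\Gamma_t$ is transversal to $L$ if and only if the hypersurface $F^{-1}(0)\subset X\times\Lambda$ is transversal to $X\times\Gamma_t$. So the goal becomes: for almost every $t$, the submanifold $X\times\Gamma_t = X\times\rho^{-1}(t)$ is transversal to $F^{-1}(0)$. First I would consider the restriction of $\rho$ (pulled back to $X\times\Gamma$ via the projection $X\times\Gamma\to\Gamma$) to the hypersurface $F^{-1}(0)\cap(X\times\Gamma)$; call this map $\tilde\rho: F^{-1}(0)\cap(X\times\Gamma)\to\R^m$. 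The key observation is that $t$ being a regular value of $\tilde\rho$ is precisely equivalent to $F^{-1}(0)$ being transversal to $X\times\Gamma_t$ inside $X\times\Gamma$. Then, since $\Gamma_t$ is already assumed non-singular (so $X\times\Gamma_t$ is a genuine submanifold of $X\times\Gamma$ of the expected codimension), this transversality inside $X\times\Gamma$ upgrades to transversality inside $X\times\Lambda$ because $X\times\Gamma$ itself is cut out transversally there — or more simply one works throughout relative to the ambient $X\times\Gamma$, which is what Proposition \ref{prop2.1.1} already does.

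The next step is to apply Sard's theorem to $\tilde\rho$. The subtlety is regularity: $F$ is only $C^{1,1}$, not smooth, so $F^{-1}(0)$ is only a $C^{1,1}$-hypersurface and $\tilde\rho$ is only a $C^1$ map — but Sard's theorem for $C^1$ maps between manifolds of equal dimension (or, more carefully, when the domain dimension is at least the target dimension, one needs $C^k$ with $k\ge \dim(\text{domain})-\dim(\text{target})+1$) requires attention. Here $\dim(F^{-1}(0)\cap(X\times\Gamma)) = \dim X + \dim\Gamma - 1$ and the target is $\R^m$, so one needs $F$ of class $C^k$ with $k$ large enough in general; however, the hypothesis in the excerpt that $F$ is smooth at all points where $x$ is a critical point of $F_\lambda$ with critical value zero is exactly the device that rescues this. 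Indeed, the points of $F^{-1}(0)$ that can possibly fail transversality with $X\times\Gamma_t$ are precisely those $(x,\lambda)$ where $x$ is a critical point of $F_\lambda$ (this is the content of the front description: away from such points the projection $F^{-1}(0)\to\Lambda$ is a submersion, so $F^{-1}(0)$ is automatically transverse to every $X\times\Gamma'$), and near those points $F$ is smooth. So $\tilde\rho$ is smooth on a neighborhood of the locus where it matters, and Sard applies there; on the complement transversality is automatic for all $t$. Taking the union of the two cases, the set of bad $t$ has measure zero.

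The main obstacle is the bookkeeping around the low regularity of $F$ and making precise the claim that transversality of $F^{-1}(0)$ with $X\times\Gamma_t$ can only fail over the critical locus $\{(x,\lambda): x \text{ crit.\ pt.\ of } F_\lambda\}$ — i.e.\ relating the front $L$ and its tangent hyperplanes (as defined via the Legendrian lift $\hat L$) to the differential-geometric transversality condition, and checking that the smoothness hypothesis on $F$ covers exactly this locus so that Sard's theorem is legitimately applicable. Once that is in place, the measure-zero conclusion is immediate from Sard plus Proposition \ref{prop2.1.1}. I would also remark that this is essentially \cite[Proposition $3.2$]{Giv1995}, so the proof can be kept brief, citing Sard and the already-established Proposition \ref{prop2.1.1}.
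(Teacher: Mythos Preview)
Your proposal is correct and follows the natural approach: the paper does not spell out a proof for this corollary (it is stated as an immediate consequence of Proposition~\ref{prop2.1.1}, itself taken from \cite{Giv1995}), and the intended argument is precisely Sard's theorem applied to the composition $F^{-1}(0)\cap(X\times\Gamma)\to\Gamma\to\R^m$, combined with Proposition~\ref{prop2.1.1}. Your handling of the regularity issue --- observing that failure of transversality can only occur where $x$ is a critical point of $F_\lambda$ with value $0$, and that $F$ is smooth there by hypothesis --- is exactly the point of that assumption and is correctly used.
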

The two following propositions can be proved using standard gradient flow deformations.
\begin{propsubsec}{\label{prop2.1.2}}
Let $F : X \times \Lambda \times [0,1] \to \R$ be a $C^{1,1}$ family of functions $F_s : X \times \Lambda \to \R$ such that for any $s \in [0,1]$, $0$ is a regular value of $F_s$, and $F_s$ is smooth at all critical points $(x, \lambda)$ such that $x$ is a critical point of $F_{s, \lambda}$ with critical value $0$. Let $\Gamma \subset \Lambda$ be a compact submanifold with boundary $\partial \Gamma = \Gamma \cap \partial \Lambda$, and suppose that for any $s \in [0,1]$, $\Gamma$ and $\partial \Gamma$ are transversal to $L_s$. Then there exists a Lipschitz isotopy $I : X \times \Lambda \times [0,1] \to X \times \Lambda$ such that
$$I_s(F_{0, \Gamma}^-, F_{0, \partial \Gamma}^-) = (F_{s, \Gamma}^-, F_{s, \partial \Gamma}^-),$$
where we have denoted by $F_{s, \Gamma}^-, F_{s, \partial \Gamma}^-$ the sublevel sets defined for $F_s$ as in equation (\ref{eq5}). If moreover each $F_s$ is invariant under the action of a compact Lie group $G$ on $X$, the isotopy can be made $G$-equivariant.
\end{propsubsec}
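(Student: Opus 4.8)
The statement asserts that, provided the sublevel sets $F^\pm_{\Gamma,s}$ never degenerate along the deformation parameter $s$ (which is what the transversality of $\Gamma$ and $\partial\Gamma$ to the front $L_s$ guarantees, by Proposition \ref{prop2.1.1}), one can track the sublevel sets by an ambient isotopy. The natural tool is a gradient-flow (or rather, a modified gradient-flow) argument, in the same spirit as the deformation lemmas in Morse theory and in the earlier work on generating functions \cite{theret1995utilisation}, and exactly as in \cite[Proposition $3.3$]{Giv1995}. The hypothesis $F \in C^{1,1}$ is precisely what is needed so that the relevant vector fields are Lipschitz and hence have well-defined, continuously-depending flows (even though $F$ itself need not be $C^2$).

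\textbf{Step 1: set up the vector field.} Consider the function $\mathbb{F} : X \times \Lambda \times [0,1] \to \R$, $\mathbb{F}(x,\lambda,s) = F_s(x,\lambda)$. Fix a $G$-invariant Riemannian metric on $X \times \Lambda$ (averaging over the compact group $G$; note $G$ acts only on $X$, so this is harmless). I would first restrict attention to a neighborhood of $\mathbb{F}^{-1}(0)$, where by hypothesis $F$ is smooth; away from this zero set the isotopy can be taken to preserve each of $F^-_{\Gamma,s}$ and $F^+_{\Gamma,s}$ locally since the sign of $\mathbb{F}$ is locally constant there, so there is nothing to move. Near the zero set, the key point is that $0$ is a regular value of each $F_s$ on the interior, but at the boundary one must be careful with the corner $X \times \partial\Gamma$. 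The transversality hypothesis ($\Gamma \pitchfork L_s$ and $\partial\Gamma \pitchfork L_s$) ensures, via Proposition \ref{prop2.1.1}, that $\mathbb{F}^{-1}(0) \pitchfork (X \times \Gamma \times [0,1])$ and $\mathbb{F}^{-1}(0) \pitchfork (X \times \partial\Gamma \times [0,1])$; this is what makes the construction uniform in $s$.

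\textbf{Step 2: build the isotopy.} On a neighborhood of $\mathbb{F}^{-1}(0)$ inside $X \times \Gamma \times [0,1]$, I would construct a Lipschitz vector field $W_s$ on $X \times \Lambda$, tangent to $\Gamma$ and to $\partial\Gamma$ (so that the isotopy preserves the decomposition of the pair), such that along $F_s^{-1}(0)$ one has $dF_s(W_s) = -\partial_s F_s$. Such a $W_s$ exists: by the transversality, the restriction of $dF_s$ to $T\Gamma$ is surjective at points of $F_s^{-1}(0) \cap (X\times\Gamma)$, and likewise on the boundary stratum, so one can solve the linear equation $dF_s(W_s) = -\partial_s F_s$ with $W_s$ tangent to both $\Gamma$ and $\partial\Gamma$; choosing $W_s$ of the form $-(\partial_s F_s)\, \nabla^{\Gamma} F_s / |\nabla^{\Gamma} F_s|^2$ (with $\nabla^\Gamma$ the gradient of $F_s|_\Gamma$, extended to the boundary stratum compatibly) does the job, and is $G$-equivariant if the metric is $G$-invariant, and Lipschitz because $F\in C^{1,1}$. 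Cut off $W_s$ with a $G$-invariant bump function supported near $\mathbb{F}^{-1}(0)$ and equal to $1$ on a smaller neighborhood, and outside that neighborhood extend by $0$. Integrating the time-dependent Lipschitz vector field $W_s$ produces a Lipschitz isotopy $I : X \times \Lambda \times [0,1] \to X \times \Lambda$ with $I_0 = \mathrm{id}$, preserving $X \times \Gamma$ and $X \times \partial\Gamma$, and $G$-equivariant. A standard computation ($\frac{d}{ds} F_s(I_s(x,\lambda)) = 0$ whenever $F_s(I_s(x,\lambda)) = 0$, using $dF_s(W_s) = -\partial_s F_s$ on the zero set and the cutoff being $1$ there) shows that $I_s$ maps $F_{\Gamma,0}^{\pm}$ onto $F_{\Gamma,s}^{\pm}$ and $\partial F_{\partial\Gamma,0}^{\pm}$ onto $\partial F_{\partial\Gamma,s}^{\pm}$, which is the claim.

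\textbf{The main obstacle.} The delicate part is the behaviour at the corner $X \times \partial\Gamma$: one needs the single vector field $W_s$ to be simultaneously tangent to $X\times\Gamma$, tangent to $X\times\partial\Gamma$, and to satisfy the infinitesimal level-preservation $dF_s(W_s) = -\partial_s F_s$ on $F_s^{-1}(0)$ — and this must work uniformly in $s$. This is exactly where the two separate transversality assumptions ($\Gamma \pitchfork L_s$ and $\partial\Gamma \pitchfork L_s$) are both used, and one must check that the linear-algebra solvability at boundary points is compatible with that in the interior so the resulting $W_s$ is globally Lipschitz. Everything else — $G$-equivariance, the $C^{1,1}$ regularity giving Lipschitz flows, the cutoff away from the zero set — is routine and is handled exactly as in \cite[section $3$]{Giv1995}.
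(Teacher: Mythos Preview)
Your proposal is correct and is precisely the standard gradient-flow deformation argument that the paper has in mind: immediately before stating Propositions~\ref{prop2.1.2} and~\ref{prop2.1.3}, the paper simply writes ``The two following propositions can be proved using standard gradient flow deformations'' and gives no further details. What you have written is exactly that argument spelled out --- the time-dependent vector field $W_s$ solving $dF_s(W_s)=-\partial_s F_s$ on the zero level, made tangent to the strata $X\times\Gamma$ and $X\times\partial\Gamma$ via the two transversality hypotheses, cut off away from the zero set, and averaged to be $G$-equivariant --- so there is nothing to compare.
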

\begin{propsubsec}{\label{prop2.1.3}}
Let $\lbrace \Gamma_s \rbrace_{s \in [0,1]}$ be a smooth family of regular levels $\Gamma_s = \rho^{-1}(s) \subset \Lambda$ of some smooth fuction $\rho : \Lambda \to \R$. Suppose that for any $s \in [0,1]$, $\Gamma_s$ and $\partial \Gamma_s$ are transversal to $L$, and that $F$ is smooth at all critical points $(x, \lambda)$ such that $x$ is a critical point of $F_{\lambda}$ with critical value $0$. Then there exists a Lipschitz isotopy $I : X \times \Lambda \times [0,1] \to X \times \Lambda$ such that
$$I_s(F_{\Gamma_0}^-, F_{\partial \Gamma_0}^-) = (F_{\Gamma_s}^-, F_{\partial \Gamma_s}^-).$$
If moreover $F$ is invariant under the action of a compact Lie group $G$ on $X$, the isotopy can be made $G$-equivariant.
\end{propsubsec}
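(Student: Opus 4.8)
The plan is to reduce Proposition \ref{prop2.1.3} to Proposition \ref{prop2.1.2} by absorbing the motion of the submanifolds $\Gamma_s$ into an ambient isotopy of the parameter manifold $\Lambda$, thereby trading a moving family of submanifolds for a moving family of functions over the \emph{fixed} submanifold $\Gamma_0$. Concretely, I would first produce a $C^{1,1}$ ambient isotopy $\{\psi_s\}_{s\in[0,1]}$ of $\Lambda$ with $\psi_0=\mathrm{id}_\Lambda$, $\psi_s(\Gamma_0)=\Gamma_s$ and $\psi_s(\partial\Gamma_0)=\partial\Gamma_s$; this exists by the isotopy extension theorem applied to the $C^{1,1}$ family of embeddings $e_s:\Gamma_0\hookrightarrow\Lambda$ whose images are the $\Gamma_s$ (if $\Lambda$ is non-compact one first restricts to a relatively compact neighbourhood of $\bigcup_s\Gamma_s$, which contains all the sublevel sets $F_{\Gamma_s}^{\pm},F_{\partial\Gamma_s}^{\pm}$ in question, so that the extension can be chosen with compact support). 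In every situation where Proposition \ref{prop2.1.3} is actually invoked in this paper the family $\{\Gamma_s\}$ is smooth — the $\Gamma_s$ being affine slices $\Lambda_N\cap p^{-1}(\nu_s)$ and $\psi_s$ a translation — so $\psi_s$ may then be taken smooth.

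Next I would set $\bar F:X\times\Lambda\times[0,1]\to\R$, $\bar F_s(x,\lambda):=F\bigl(x,\psi_s(\lambda)\bigr)$. This is a $C^{1,1}$ family of functions on $X$; when $\psi_s$ is smooth it inherits from $F$ the property of being smooth at every point where $x$ is a critical point of $\bar F_s(\,\cdot\,,\lambda)$ with critical value zero (for a merely $C^{1,1}$ family $\{\Gamma_s\}$ one checks directly that the gradient-flow deformation underlying Proposition \ref{prop2.1.2} still applies to $\bar F$). The front of $\bar F_s$ is $\bar L_s=\psi_s^{-1}(L)$, so — since diffeomorphisms preserve transversality, cf.\ Proposition \ref{prop2.1.1} — the hypothesis ``$\Gamma_0$ and $\partial\Gamma_0$ transversal to $\bar L_s$ for all $s$'' is equivalent to ``$\Gamma_s=\psi_s(\Gamma_0)$ and $\partial\Gamma_s$ transversal to $L$ for all $s$'', which is what we assumed. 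Proposition \ref{prop2.1.2}, applied to $\bar F$ with the fixed submanifold $\Gamma_0$, then yields a Lipschitz isotopy $\{J_s\}$ of $X\times\Lambda$ with $J_s\bigl(\bar F_{\Gamma_0,0}^{\pm},\bar F_{\partial\Gamma_0,0}^{\pm}\bigr)=\bigl(\bar F_{\Gamma_0,s}^{\pm},\bar F_{\partial\Gamma_0,s}^{\pm}\bigr)$, and if $F$ is $G$-invariant then so is $\bar F$ (the $G$-action lives on the $X$-factor and commutes with $\mathrm{id}_X\times\psi_s$), so $J_s$ can be taken $G$-equivariant.

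Finally I would unwind the substitution. Since $\psi_0=\mathrm{id}$ we have $\bar F_0=F$, hence $\bar F_{\Gamma_0,0}^{\pm}=F_{\Gamma_0}^{\pm}$ and $\bar F_{\partial\Gamma_0,0}^{\pm}=F_{\partial\Gamma_0}^{\pm}$; and from $\bar F_s=F\circ(\mathrm{id}_X\times\psi_s)$ one gets $(\mathrm{id}_X\times\psi_s)\bigl(\bar F_{\Gamma_0,s}^{\pm}\bigr)=F_{\Gamma_s}^{\pm}$ and likewise $(\mathrm{id}_X\times\psi_s)\bigl(\bar F_{\partial\Gamma_0,s}^{\pm}\bigr)=F_{\partial\Gamma_s}^{\pm}$. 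Therefore $I_s:=(\mathrm{id}_X\times\psi_s)\circ J_s$ does the job: it is a Lipschitz isotopy (a composition of a Lipschitz map with a $C^{1,1}$ one) with $I_0=\mathrm{id}$, it satisfies $I_s\bigl(F_{\Gamma_0}^{\pm},F_{\partial\Gamma_0}^{\pm}\bigr)=\bigl(F_{\Gamma_s}^{\pm},F_{\partial\Gamma_s}^{\pm}\bigr)$, and it is $G$-equivariant whenever $F$ is $G$-invariant, since both factors in the composition are.

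The analytic heart of the statement is already packaged in Proposition \ref{prop2.1.2} (the standard gradient-flow deformation), so the only genuine obstacle is the bookkeeping in the first two steps: producing the ambient isotopy $\psi_s$ and making sure the pulled-back family $\bar F$ still meets the precise regularity requirements of Proposition \ref{prop2.1.2} — in particular smoothness at critical points of critical value zero — together with the compact-support issue when $\Lambda$ is open. I expect this to be routine, as the families arising in the paper are smooth, but it is the place where the $C^{1,1}$ hypothesis must be handled with some care.
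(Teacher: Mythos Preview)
Your reduction of Proposition~\ref{prop2.1.3} to Proposition~\ref{prop2.1.2} via an ambient isotopy of $\Lambda$ is correct, and the bookkeeping you carry out (front of $\bar F_s$ equals $\psi_s^{-1}(L)$, the identity $(\mathrm{id}_X\times\psi_s)(\bar F_{\Gamma_0,s}^{\pm})=F_{\Gamma_s}^{\pm}$, $G$-equivariance) is sound. The paper, however, does not actually prove Proposition~\ref{prop2.1.3} separately: it simply states that both Propositions~\ref{prop2.1.2} and~\ref{prop2.1.3} ``can be proved using standard gradient flow deformations'' and moves on. So there is no detailed argument in the paper to compare against; your approach is more explicit than what the paper provides.

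The difference in spirit is that the paper treats both propositions as parallel instances of the same gradient-flow mechanism, whereas you make Proposition~\ref{prop2.1.3} a formal corollary of Proposition~\ref{prop2.1.2} at the cost of invoking the isotopy extension theorem. Your route is more economical conceptually (one deformation argument instead of two), and as you note, in every application in the paper the $\Gamma_s$ are affine slices moved by translations, so $\psi_s$ is explicit and smooth and the regularity caveats you flag evaporate. A direct gradient-flow proof would avoid the isotopy extension theorem altogether but would essentially repeat the argument for Proposition~\ref{prop2.1.2}; neither approach has a real advantage over the other here.
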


\begin{remarksubsec}
\normalfont In the sequel $\Lambda$ and $\Gamma$ will be a \textit{stratified} manifolds. In this setting, one must improve the notion of transversality: $\Gamma$ is transversal to $L$ if each of its strata is. The results above remain valid when replacing manifolds by stratified manifolds, for this adapted notion of transversality (we refer to \cite{GM88} for a detailed treatment of stratified Morse theory).
\end{remarksubsec}

\subsection{Equivariant cohomology and conical spaces}{\label{sec2.2}}

In the sequel we will study the equivariant homotopy type of conical sublevel sets of generating families. This section is meant to recall several facts from equivariant cohomology, fix some notations, and describe a simple identification in equivariant cohomology which holds for conical spaces. The latter will be convenient for defining homomorphisms of equivariant cohomology groups (see section \ref{sec3.6}). We refer to \cite{BGS13} for a complete treatment of equivariant cohomology theory. Let $X$ be a topological space provided with the action of a compact Lie group $G$. Equivariant cohomology $H^*_G(X;\C)$ with complex coefficients is defined as the singular cohomology $H^*(X_G;\C)$ of the quotient
$$X_G := (X \times EG) / G,$$
where $EG \to BG$ is the universal principal $G$-bundle over the classifying space $BG$ of $G$. The canonical projection
$$(X \times EG)/G \to EG/G$$
provides $H^*_G(X;\C)$ with the structure of a module over $H^*(BG;\C)$, which plays the role of the coefficient ring $H^*_G(\pt;\C)$ in equivariant cohomology theory. If $G$ is the $n$-dimensional torus $\mathbb{T}^n = \R^n / \Z^n$, the coefficient ring $H^*_{\mathbb{T}^n}(\pt;\C)$ is naturally isomorphic to a polynomial algebra in $n$ variables $u = (u_1,\hdots ,u_n)$ of degree $2$. More precisely, there is a natural algebra isomorphism, called the \textit{Chern-Weil isomorphism}
$$\chi : H_{\mathbb{T}^n}^*(\pt;\C) \simeq \C[\R^{n*}],$$
between the cohomology of the classifying space $B \mathbb{T}^n$ and the polynomial algebra on $\R^{n*}$ over $\C$. 

Throughout this paper we will mainly deal with singular cohomology with complex coefficients, and will therefore use the notation $H^*(Y) := H^*(Y;\C)$. If $(u_1,\hdots ,u_n)$ denotes the standard basis of $\R^{n*}$, the Chern-Weil isomorphism writes
$$
H^*_{\mathbb{T}^n}(\pt) \simeq \C[u_1,\hdots ,u_n].
$$

Let now $A \subset X$ be a $G$-invariant subspace of $X$, and $pr : X \times EG \to X_G$ denote the canonical projection. We introduce the following notations for $G$-equivariant cochain complexes:
\begin{itemize}
\item $C_G^*(X) := C^*(X_G)$ and $C_G^*(A) := C^*(A_G)$;
\item $C_G^*(X, A) := \ker (C_G^*(X) \to C_G^*(A))$, where the map is induced by the inclusion $A \subset X$;
\item $C_{G,c}^*(X) := \lbrace \sigma \in C_G^*(X) : \exists K \subset X \text{ compact such that } pr^* \sigma \text{ vanishes on } (X \setminus K) \times EG\rbrace$, the complex of equivariant cocycles with compact support;
\item $C_{G,c}^*(A) := \lbrace \sigma \in C_G^*(A) : \exists K \subset A \text{ compact such that } pr^*_{| A \times EG} \sigma \text{ vanishes on } (A \setminus K) \times EG\rbrace$;
\item $C_{G,c}^*(X,A) := \ker (C^*_{G,c}(X) \to C^*_{G,c}(A))$;
\item if $X \to M$ is a fiber bundle of $G$-spaces, $C_{G,cv}^*(X)$, $C_{G,cv}^*(A)$, $C_{G,cv}^*(X,A)$, the complexes of equivariant (relative) cocycles with vertical compact support, that is we replace the term compact above by compact in the fibers direction. We will denote by $H_{G,cv}^*$ the corresponding equivariant cohomology groups.  
\end{itemize}
Suppose that $X$ and $A$ are closed conical subspaces of the standard Euclidean space $\R^{2m} = \C^m$, that is $tX \subset X$ and $tA \subset A$ for all $t > 0$, and that $G$ is a subtorus of the maximal torus $\mathbb{T}^m := (S^1)^m$, acting on $\C^m$ by rotation on each coordinate. 
Then the natural retraction of $\R^{2m}$ onto the closure $\overline{B}$ of its unit ball $B$ induces a $G$-equivariant retraction of $X$ (resp. $A$) onto $X \cap \overline{B}$ (resp. $A \cap \overline{B}$), and of $X \setminus (X \cap B)$ (resp. $A \setminus (A \cap B)$) onto $X \cap S$ (resp. $A \cap S$), where $S = \partial \overline{B}$. Moreover, the inclusion $C_G^*(X, X \setminus X \cap B) \hookrightarrow C_{G,c}^*(X)$ induces an isomorphism in equivariant cohomology, since any compact subset $K \subset X$ is included in an intersection $X \cap \widetilde{B}$, where $\widetilde{B}$ is a ball centered at $0$, and $X \cap \widetilde{B}$ is $G$-equivariantly homotopic to $X \cap B$ ($X$ is conical and $G$ commutes with $\R_{>0}$). The same argument applies when replacing $X$ with $A$. Putting all these simple facts together yields a natural isomorphism of cohomology groups
$$
H^*(C_G^*(X \cap S, A \cap S)) \simeq H^*(C_G^*(X,A) / C_{G,c}^*(X,A)).
$$
\qed

\section{The constructions}{\label{sec3}}

\subsection{The prequantization space}{\label{sec3.1}}

In this section, we describe a natural construction of a prequantization space over an integral symplectic toric manifold $(M^{2d},\omega, \mathbb{T})$, mainly following \cite{BZ15}. The action of $\mathbb{T}$ is induced by a momentum map $M \to \liet^*$, where $\liet^*$ is the dual to the Lie algebra $\liet$ of $\mathbb{T}$. The image $\Delta$ of the momentum map is called the moment polytope. If $\Delta$ has $n$ facets, it is given by
$$
\Delta = \lbrace x \in \liet^* \ |\  \langle x,v_j \rangle + a_j \geq 0 \text{ for } j=1,\hdots ,n \rbrace,
$$
where the conormals $v_j$ are primitive vectors in the integer lattice $\liet_{\Z} := \ker(\exp : \liet \to \mathbb{T})$, and $a := (a_1,\hdots ,a_n) \in \R_{\geq 0}^{n*} \setminus \{0\}$. The polytope $\Delta$ is compact and smooth, that is each $k$-codimensional face of $\Delta$ is the intersection of exactly $k$ facets and the $k$ associated conormals $\lbrace v_{l_1}, \hdots , v_{l_k} \rbrace$ can be extended to an integer basis for the lattice $\liet_{\Z}$. 

\subsubsection{Delzant's construction of symplectic toric manifolds}{\label{sec3.1.1}}

Let us first recall Delzant's construction of symplectic toric manifolds \cite{Del88}. The standard Hamiltonian action of the torus $\mathbb{T}^n := \R^n / \Z^n$ on $(\C^n, \omega_{\text{std}} := \underset{j=1}{\overset{n} \sum} \droit x_j \wedge \droit y_j)$ by rotation in each coordinate is induced by the momentum map
\begin{equation}{\label{eq6}}
P : \C^n \to \R^{n*}, \quad \text{where} \quad \langle P(z), \lambda \rangle = \pi \underset{j=1}{\overset{n} \sum} \lambda_j |z_j|^2, \quad \text{and} \quad \lambda = (\lambda_1,\hdots ,\lambda_n) \in \R^n. 
\end{equation}
Consider the following surjective linear map:
$$\beta_{\Delta} : \R^n \to \liet, \quad \epsilon_j \mapsto v_j, \quad \text{for all} \quad j=1,\hdots ,n,$$
where $(\epsilon_1,\hdots ,\epsilon_n)$ is the standard basis of $\R^n$, and $v_j \in \liet_{\Z}$ are the conormals. Since $\Delta$ is compact and smooth, the map $\beta_{\Delta}$ satisfies $\beta_{\Delta} (\Z^n) = \liet_{\Z}$, and therefore it induces a homomorphism $[ \beta_{\Delta} ] : \mathbb{T}^n \to \mathbb{T}$. We define the connected subtorus 
$$\K \subset \mathbb{T}^n$$
as the kernel of $[\beta_{\Delta}]$. It has Lie algebra
$$\liek := \ker(\beta_{\Delta} : \R^n \to \liet),$$
and if $\iota : \mathfrak{k} \hookrightarrow \R^n$ denotes the inclusion, the momentum map for the action of $\K$ on $\C^n$ is given by
$$
P_{\K} := \iota^* \circ P : \C^n \to \liek^*.
$$
The torus $\K$ acts freely on the regular level set
$$
P_{\K}^{-1}(p), \quad \text{where} \quad p := \iota^*(a) \in \liek^* \setminus \{0\},\footnote{We will see below that compactness of $M$ is equivalent to $\ker \iota^* \cap \R_{\geq 0}^{n*} = \{0\}$. If $p = 0$, then $a \in \ker \iota^*$. This cannot happen, since $\ker \iota^* \cap \R_{\geq 0}^{n*} = \{0\}$, and $a \in \R_{\geq 0}^{n*} \setminus \{0\}$.}
$$
and if $X_{\lambda}(z) = 2 i \pi (\lambda_1 z_1,\hdots ,\lambda_n z_n) \in \C^n = T_z \C^n$ denotes the Hamiltonian vector field for the function $\langle P, \lambda \rangle : \C^n \to \R$, and $\alpha_{\text{std}} = \frac{1}{2} \underset{j=1}{\overset{n}{\sum}}(x_j \droit y_j - y_j \droit x_j)$ is the standard $1$-form on $\C^n$ so that $\droit \alpha_{\text{std}} = \omega_{\text{std}}$, we have
\begin{equation}{\label{eq7}}
\alpha_{\text{std}}(X_{\lambda}) = \langle P, \lambda \rangle \quad \text{and} \quad \iota_{X_{\lambda}} \droit \alpha_{\text{std}} = \iota_{X_{\lambda}} \omega_{\text{std}} = - \droit \langle P, \lambda \rangle.
\end{equation}
In particular, $(\mathcal{L}_{X_{\lambda}} \omega_{\text{std}})|_{P_{\K}^{-1}(p)} = 0$. Therefore, symplectic reduction gives rise to a symplectic manifold $(M_{\Delta}, \omega_{\Delta})$, where
$$
M_{\Delta} := P_{\K}^{-1}(p) / \K, \quad \text{and the symplectic form $\omega_{\Delta}$ is induced by} \quad \omega_{\text{std}}|_{P_{\K}^{-1}(p)}.
$$
Finally, Delzant's theorem \cite{Del88} shows that $(M_{\Delta},\omega_{\Delta}, \mathbb{T}^n / \K)$ and $(M,\omega, \mathbb{T})$ are equivariantly symplectomorphic as symplectic toric manifolds.

\subsubsection{The contact sphere}{\label{sec3.1.2}}

The generating families of our paper differ from those defined in \cite{Giv95} for they are related to a contactomorphism of a prequantization space over $M$, rather than to a symplectomorphism of $M$. Yet, we will see now that the regular level set $P_{\K}^{-1}(p)$ lies in a contact sphere in $\C^n \setminus \{0\}$ (in fact it lies in many). Since the latter is the symplectization of the former, this will allow us to lift the contactomorphism of the prequantization space to a symplectomorphism of $(\C^n, \omega_{\text{std}})$, (see section \ref{sec3.2.2}), and thus we will be able to define the generating families as in the aforementioned paper. The existence of such a sphere is ensured by compactness of the toric manifold $(M, \omega, \mathbb{T})$. There is no canonical way of choosing it, however the prequantization space won't depend on such a choice (in fact it does not depend on the sphere at all, see Remark \ref{rem3.1.1}). 

The image of the momentum map $P$ from equation $(\ref{eq6})$ is the first orthant $\R^{n*}_{\geq 0}$, and the polytope $\Delta$ can be identified with 
$$
(\iota^*)^{-1}(p) \cap \R^{n*}_{\geq 0}.
$$
Indeed, there is a commutative diagram
$$
\begin{tikzcd}
\C^n \arrow[r, "P"] & \R^{n*} \arrow[r, "\iota^*"] & \liek^*\\
P_{\K}^{-1}(p) \arrow[u, hook] \arrow[r, "\widetilde{\mu}"] &  \liet^* \arrow[u, "\beta_{\Delta}^*"]\\
& 0 \ar[u],
\end{tikzcd}
$$
where $\widetilde{\mu}$ is the composition $\mu \circ \pi$ of the momentum map of the $\mathbb{T}$-action on $M \simeq P_{\K}^{-1}(p) / \K$ with the natural projection $\pi : P_{\K}^{-1}(p) \to P_{\K}^{-1}(p) / \K$. The image of $\widetilde{\mu}$ is the moment polytope $\Delta$, so that
$$
\Delta \simeq \beta_{\Delta}^*(\Delta) = \beta_{\Delta}^* \circ \widetilde{\mu}(P_{\K}^{-1}(p)) = P(P_{\K}^{-1}(p)) = (\iota^*)^{-1}(p) \cap P(\C^n) = (\iota^*)^{-1}(p) \cap \R_{\geq 0}^{n*}.
$$
Moreover, compactness of $M_{\Delta}$ is equivalent to that of $\Delta$, which is ensured by the condition 
$$
\ker \iota^* \cap \R^{n*}_{\geq 0} = \lbrace 0 \rbrace.
$$
Note that $\ker \iota^*$ is equal to the annihilator $\iota(\liek)^{\perp}$ of $\iota(\liek)$ in $\R^{n*}$. We claim that $\iota(\liek) \cap \R_{>0}^n \neq \emptyset$. In such a case, the contact sphere can be defined as follows: for any $b \in \liek$ such that $\iota(b) = (b_1, \hdots , b_n) \in \R_{>0}^n$, we have
$$
p(b) = \langle \iota^*(a), b \rangle = \langle a, \iota(b) \rangle > 0,
$$
since $a \in \R_{\geq 0}^{n*} \setminus \{0\}$. Then the following contact sphere obviously contains $P_{\K}^{-1}(p)$:
\begin{equation}{\label{eq8}}
S_p := \lbrace z \in \C^n \ | \ \underset{j=1}{\overset{n} \sum} b_j \pi |z_j|^2 = p(b) \rbrace \quad \text{with contact form} \quad \alpha_p := \alpha_{\text{std} | S_p}.
\end{equation}
We are thus led to prove that
$$
\iota(\liek) \cap \R_{>0}^n \neq \emptyset.
$$
This is a consequence of the hyperplane separation theorem (see for instance \cite{BV04}): if $\iota(\liek) \cap \R_{>0}^n = \emptyset$, there exists a \textit{non-zero} vector $v$ and a real number $c \in \R$ such that 
$$
\langle v, \iota(\lambda) \rangle \leq c \quad \text{and} \quad \langle v, y \rangle \geq c,
$$
for all $\lambda \in \liek$, and $y \in \R_{>0}^n$. Since $\iota(\liek)$ is a vector space, we have necessarily $\langle v, \iota(\lambda) \rangle = 0$, for all $\lambda \in \liek$. In particular, $c \geq 0$, whence $\langle v, y \rangle \geq 0$, for all $y \in \R_{>0}^n$. For any $\epsilon$, we then have 
$$
\langle v, (1, \epsilon, \hdots , \epsilon) \rangle \geq 0.
$$
In the limit $\epsilon \to 0$, this yields $v_1 \geq 0$. Arguing similarly for the other coordinates of $v$, we obtain that $v \in \iota(\liek)^{\perp} \cap \R_{\geq 0}^{n*}$, which is a contradiction with $M_{\Delta}$ being compact. This proves that $\iota(\liek) \cap \R_{>0}^n \neq \emptyset$, as well as the existence of the above contact sphere.

\subsubsection{The prequantization space}{\label{sec3.1.3}}

We now construct our prequantization space. From now on we assume that the symplectic toric manifold $(M,\mathbb{T},\omega)$ is integral. Notice that since $\beta_{\Delta}(\Z^n) = \liet_{\Z}$, the inclusion $\iota$ satisfies
$$\iota(\liek_{\Z}) \subset \Z^n.$$
Thus, the integrality assumption is equivalent to
$$
p \in \liek_{\Z}^*.
$$
Let $\liek_0$ denote the kernel of $p : \liek \to \R$. Then $\liek_{0,\Z} := \ker (p : \liek_{\Z} \to \Z) \subset \liek_0$ is a sublattice of $\liek_{\Z}$. This means that $\K_0 := \exp (\liek_0) \subset \K$ is a subtorus of codimension $1$ of $\K$ which, in particular, acts freely on the regular level set $P_{\K}^{-1}(p)$. Let $j : \liek_0 \hookrightarrow \liek$ denote the inclusion of Lie algebras. Then $P_{\K}^{-1}(p)$ is the zero level of the contact momentum map
$$
\mu : S_p \to \liek_0^*, \quad \mu(z) := j^* \circ P_{\K}(z)
$$
associated with the action of $\K_0$ on the contact sphere $(S_p, \alpha_p)$. Therefore, contact reduction yields a contact manifold
$$
(V := P_{\K}^{-1}(p) / \K_0, \xi := \ker \alpha), \quad \rho^* \alpha = \alpha_{p | P_{\K}^{-1}(p)},
$$
where $\rho : P_{\K}^{-1}(p) \to V$ denotes the canonical projection (see \cite{Gei08}  
for more on contact reductions). For the circle $S^1 := \K / \K_0$, the projection
$$
\pi : (V, \alpha) \to (M_{\Delta},\omega_{\Delta})
$$
defines a principal $S^1$-bundle, and satisfies $\pi^* \omega_{\Delta} = \droit \alpha$, since $\omega_{\text{std}} = \droit \alpha_{\text{std}}$. Finally, one can choose an equivariant symplectomorphism $(M_{\Delta},\omega_{\Delta},\mathbb{T}^n / \K) \simeq (M,\omega,\mathbb{T})$, and obtain a prequantization space over the symplectic toric manifold $(M,\omega,\mathbb{T})$.

\begin{remarksubsec}{\label{rem3.1.1}}
\normalfont In \cite{BZ15}, the construction of $V$ does not involve a contact sphere. Here as well, it is enough to remark that the infinitesimal action of $\K_0$ is tangent to $\ker \alpha_{\text{std}}$ along $P_{\K}^{-1}(p)$, which follows from (\ref{eq7}). This implies that the contact form $\alpha$ is well-defined. We view $V$ as the quotient of the $S_p$ only for the lifting procedure of the following section.
\end{remarksubsec}

\subsection{Lifting contact isotopies}{\label{sec3.2}}

In this section we explain the procedure for lifting a contact isotopy of $V$ to a Hamiltonian isotopy of $\C^n$. Recall that for any contact manifold $(V,\xi)$ and any choice of contact form $\alpha$, any \textbf{contact Hamiltonian} $h : V \times [0,1] \to \R$ gives rise to a unique time-dependent vector field $X_h$ satisfying
$$
\alpha(X_h^t) = h_t \quad \text{and} \quad \droit \alpha(X_h^t,.) = - \droit h_t + \droit h_t(R_{\alpha}) \alpha, \quad h_t := h(.,t).
$$
The vector field $\{ X_h^t \}_{t \in [0,1]}$ preserves $\xi$ and, if $V$ is compact, it integrates into a contact isotopy defined for all $t \in [0,1]$, and denoted by $\{ \phi_h^t \}_{t \in [0,1]}$. This establishes a bijection, depending on the contact form $\alpha$, between smooth time-dependent functions $h : V \times [0,1] \to \R$ and contact isotopies of $V$.

\subsubsection{Lift to the contact sphere}{\label{sec3.2.1}}

Following \cite[Definition $1.6$]{BZ15}, we say that a closed submanifold $Y \subset V$ transverse to $\xi$ is \textit{strictly coisotropic with respect to $\alpha$} if it is coisotropic, that is the subbundle $TY \cap \xi$ of the symplectic vector bundle $(\xi,\droit \alpha_{| \xi})$ is coisotropic:
$$
\lbrace X \in \xi_y \ | \ \iota_X \droit \alpha = 0 \text{ on } T_yY \cap \xi_y \rbrace \subset T_yY \cap \xi_y, \quad \text{for all} \quad y \in Y,
$$
and additionally $R_{\alpha} \in T_yY$ for all $y \in Y$, that is the Reeb vector field is tangent to $Y$. 

Consider the setting
$$
(S_p, \alpha_p) \supset (P_{\K}^{-1}(p),\alpha_{p | P_{\K}^{-1}(p)}) \overset{\rho}{\to} (V,\alpha).
$$
Then $P_{\K}^{-1}(p)$ is strictly coisotropic with respect to $\alpha_p$, since it is the zero level of the contact momentum map $\mu : S_p \to \liek_0^*$ associated with the action of $\K_0$ on the contact sphere $(S_p, \alpha_p)$ (see for instance \cite[Lemma $7.7.4$]{Gei08}). Let $h : V \times [0,1] \to \R$ be a time-dependent contact Hamiltonian of $V$, and let $\phi_h := \phi_h^1$ denote the time-$1$ map of the contact isotopy $\lbrace \phi_h^t \rbrace_{t \in [0,1]}$ generated by $h$. We first lift $h_t := h(.,t)$ to a $\K_0$-invariant function 
$$
\tilde{h}_t : P_{\K}^{-1}(p) \to \R, \quad \tilde{h}_t := \rho^* h_t,
$$
and then extend $\tilde{h}$ to a $\K_0$-invariant contact Hamiltonian $\overline{h} : S_p \times [0,1] \to \R$, so that $\overline{h}_{t | P_{\K}^{-1}(p)} = \tilde{h}_t$. By \cite[Lemma $3.1$ and $3.2$]{BZ15}, the contact isotopy $\lbrace \phi_{\overline{h}}^t \rbrace_{t \in [0,1]}$ generated by $\overline{h}$ and the Reeb flow $\lbrace \phi_{\alpha_p}^t \rbrace_{t \in \R}$ of $\alpha_p$ preserve $P_{\K}^{-1}(p)$, and project to the contact isotopy $\lbrace \phi_h^t \rbrace_{t \in [0,1]}$ and the Reeb flow $\lbrace \phi_{\alpha}^t \rbrace_{t \in \R}$ of $\alpha$ respectively. More precisely, the following equalities hold:
$$
\begin{array}{lcl}
\rho \circ \phi_{{\alpha_p} | P_{\K}^{-1}(p)}^t = \phi_{\alpha}^t \circ \rho : P_{\K}^{-1}(p) \to V \quad \text{for all $t \in \R$};\\
\rho \circ \phi_{\overline{h} | P_{\K}^{-1}(p)}^t = \phi_h^t \circ \rho : P_{\K}^{-1}(p) \to V \quad \text{for all $t \in [0,1]$}.\\
\end{array}
$$
	
Let $q \in V$ be an $\alpha$-translated point of $\phi_h$, that is
$$
\phi_h(q) = \phi_{\alpha}^s(q) \quad \text{for some $s \in \R$, and} \quad (\phi_h^* \alpha)_q = \alpha_q.
$$
Then $q$ is a \textbf{discriminant point} of $\phi_{\alpha}^{-s} \circ \phi_h$, that is an $\alpha$-translated point which is also a fixed point of $\phi_{\alpha}^{-s} \circ \phi_h$. %In other words, on can say that $q$ is a discriminant point of $\phi_h$, up to the $\K / \K_0$-action on $V$. Now, f
For any $z \in P_{\K}^{-1}(p)$ such that $\rho(z) = q$, the equations above show moreover that
$$
\rho (\phi_{\alpha_p}^{-s} \circ \phi_{\overline{h}}(z)) = \rho(z).
$$
In other words, $\phi_{\alpha_p}^{-s} \circ \phi_{\overline{h}}(z)$ and $z$ lie in the same $\K_0$-orbit in $P_{\K}^{-1}(p)$. Since the Reeb orbits of $\alpha$ generate the circle $\K / \K_0$, there exists $\lambda \in \liek$ such that $p(\lambda) = -s$ and $\exp(\lambda) \circ \phi_{\overline{h}}(z) = z$. Moreover,
$$
(\phi_{\overline{h} | P_{\K}^{-1}(p)}^* \rho^* \alpha)_z = ((\rho \circ \phi_{\overline{h} | P_{\K}^{-1}(p)})^* \alpha)_z = ((\phi_h \circ \rho)^* \alpha)_z = (\rho^* \phi_h^* \alpha)_z = (\rho^* \alpha)_z.
$$
On the other hand, $\phi_{\overline{h}}$ is a contactomorphism, so that $\phi_{\overline{h}}^*\alpha_p = e^g \alpha_p$, for a function $g : S_p \to \R$. Therefore, we have
$$
(\phi_{\overline{h} | P_{\K}^{-1}(p)}^* \rho^* \alpha)_z = ((\phi_{\overline{h}}^* \alpha_p)_ {| P_{\K}^{-1}(p)})_z = e^{g_{|P_{\K}^{-1}(p)}(z)} (\alpha_{p | P_{\K}^{-1}(p)})_z = e^{g(z)} (\rho^* \alpha)_z.
$$
In particular, $g(z) = 0$, and therefore $(\phi_{\overline{h}}^* \alpha_p)_z = \alpha_{p,z}$. Finally, recall that $\K$ acts by $\alpha_p$-preserving transformations (see equation (\ref{eq7})). Putting everything together, we have shown that \textit{$z$ is a discriminant point of $\exp(\lambda) \circ \phi_{\overline{h}}$}.

\subsubsection{Lift to a symplectic vector space}{\label{sec3.2.2}}

A convenient way to pass from the contact to the symplectic setting consists of associating to a contact manifold its symplectization. We briefly recall this procedure, and apply it to lift contactomorphisms of the contact sphere $(S_p, \alpha_p)$ to symplectomorphisms of $\C^n$.

Let $(N,\xi = \ker \alpha)$ be a contact manifold. Its \textbf{symplectization} is the symplectic manifold
$$
SN := N \times \R \quad \text{with symplectic form} \quad \droit (e^r \alpha),
$$
where $r$ is a coordinate on $\R$. Let $\phi$ be a contactomorphism of $N$. Then $\phi$ lifts up to a symplectomorphism 
$$
\begin{array}{lcl}
\Phi : SN \to SN \\
\quad (x,r) \to (\phi(x),r - g(x)),
\end{array}
$$
where $g : N \to \R$ is the function satisfying $\phi^* \alpha = e^g \alpha$. In particular, a discriminant point $q$ of $\phi$ corresponds to an $\R$-line $\lbrace (q,r) \in SN \ |\ r \in \R \rbrace$ of fixed points of $\Phi$. If $\phi := \phi_h$ is the time-$1$ map of a contact isotopy $\lbrace \phi_h^t \rbrace_{t \in [0,1]}$ generated by a contact Hamiltonian $h : N \times [0,1] \to \R$, $\Phi$ is the time-$1$ map $\Phi = \Phi_H := \Phi_H^1$ of the Hamiltonian isotopy $\lbrace \Phi_H^t \rbrace_{t \in [0,1]}$ generated by the Hamiltonian
$$
H_t(x,r) := e^r h_t(x), \quad t \in [0,1].
$$

Suppose now that $N \subset \C^n$ is a star-shaped hypersurface, that is the image of a map 
$$
\begin{array}{lcl}
S^{2n-1} \to \C^n\\
\quad z \to f(z)z,
\end{array}
$$
where $f$ is a smooth positive function on $S^{2n-1}$. One can show that the standard Liouville form $\alpha_{\text{std}}$ on $\C^n$ restricts to a contact form on $N$, and the symplectization $(SN, \droit(e^r \alpha_{\text{std}|N}))$ of $(N, \alpha_{\text{std}|N})$ is symplectomorphic to $(\C^n \setminus \lbrace 0 \rbrace, \omega_{\text{std}|\C^n \setminus \lbrace 0 \rbrace})$, via the symplectomorphism
$$
\begin{array}{lcl}
\Psi : N \times \R \to \C^n \setminus \lbrace 0 \rbrace. \\
\quad \quad (x,r) \to e^{\frac{r}{2}} x
\end{array}
$$
We then have
$$
\begin{array}{lcl}
\Psi \Phi \Psi^{-1} : \C^n \setminus \lbrace 0 \rbrace \to \C^n \setminus \lbrace 0 \rbrace,\\
\quad \quad \quad \quad \quad \ \  z \to \frac{|z|}{|pr(z)|e^{ \frac{1}{2} g(pr(z))}} \phi(pr(z))
\end{array}
$$
where 
$$
\begin{array}{lcl}
pr : \C^n \setminus \lbrace 0 \rbrace \to N \\
\quad \quad \quad \ z \to f(\frac{z}{|z|})\frac{z}{|z|}
\end{array}
$$
is the radial projection, and $|z| := \sqrt{\underset{j=1}{\overset{n} \sum} |z_j|^2}$. For the time-$1$ map $\phi_h$ of a contact isotopy $\{ \phi_h^t \}_{t \in [0,1]}$, the Hamiltonian of $\C^n \setminus \lbrace 0 \rbrace$ generating the Hamiltonian isotopy $\lbrace \Psi \Phi_H^t \Psi^{-1} \rbrace_{t \in [0,1]}$ is of the form
$$
\widetilde{H}_t(z) := \frac{|z|^2}{|pr(z)|^2} h_t(pr(z)). 
$$
It is homogeneous of degree $2$ with respect to the $\R_{>0}$-action on $\C^n \setminus \lbrace 0 \rbrace$, that is
$$
\widetilde{H}_t(r z) = r^2 \widetilde{H}_t(z), \quad \text{for all} \quad r>0.
$$
The Hamiltonian isotopy $\lbrace \Phi_{\widetilde{H}}^t \rbrace_{t \in [0,1]} = \lbrace \Psi \Phi^t_H \Psi^{-1} \rbrace_{t \in [0,1]}$ is therefore $\R_{>0}$-equivariant, and we can extend $\widetilde{H}_t$ and $\Phi^t_{\widetilde{H}}$  continuously to $\C^n$ by 
\begin{align*}
\begin{array}{lcl}
\widetilde{H}_t(0)=0 \\
\Phi_{\widetilde{H}}^t(0)=0.
\end{array}
\end{align*}

Back to our setting, the contact sphere $(S_p,\alpha_p)$ is a star-shaped hypersurface of $\C^n$, with
$$
f(z) := \sqrt{\frac{p(b)}{\underset{j=1}{\overset{n}\sum} \pi b_j |z_j|^2}}.
$$
The action of $\K$ on $S_p$ lifts to the linear action of $\K$ on $\C^n$, and $pr$ is $\K$-equivariant. Consider a contact Hamiltonian $h : V \times [0,1] \to \R$, and its $\K_0$-invariant lift $\overline{h} : S_p \times [0,1] \to \R$. The Hamiltonian $\widetilde{H}$ extending $\overline{h}$ to $\C^n$ is $\K_0$-invariant, and therefore the Hamiltonian isotopy $\lbrace \Phi_{\widetilde{H}}^t \rbrace_{t \in [0,1]}$ is $\K_0$-equivariant. We call $\widetilde{H}$ \textbf{a Hamiltonian lift of $h$}.

Let $q \in V$ be an $\alpha$-translated point of $\phi_h$. Any $z \in P_{\K}^{-1}(p)$ such that $\rho(z) = q$ is a discriminant point of $\exp(\lambda) \circ \phi_{\overline{h}}$, for some $\lambda \in \liek$. On $\C^n$, the latter becomes an $\R_{>0}$-line of fixed points of $\exp(\lambda) \circ \Phi_{\widetilde{H}}$:
$$
\exp(\lambda) \circ \Phi_{\widetilde{H}}(rz)=rz, \quad \text{for all} \quad r > 0.
$$

\begin{remarksubsec}{\label{rem3.2.1}}
\normalfont Notice that $\lambda$ is not unique, since $\exp(\lambda + \lambda_0) = \exp(\lambda)$, for all $\lambda_0 \in \liek_{\Z}$.
\end{remarksubsec}

\noindent Conversely, let $z \in \C^n$ be a fixed point of $\exp(\lambda) \circ \Phi_{\widetilde{H}}$ such that $r z \in P_{\K}^{-1}(p)$, for some $r > 0$. Then $r z \in S_p$ is a discriminant point of $\exp(\lambda) \circ \phi_{\overline{h}}$, and therefore $\rho(r z)$ is a discriminant point of 
$$
\phi_{\alpha}^{p(\lambda)} \circ \phi_h.
$$
In other words, \textit{$\rho(pr(z))$ is an $\alpha$-translated point of $\phi_h$} (see figure \ref{fig1}). 

\begin{figure}[H]
\begin{center}
\def\svgwidth{0.55\textwidth}
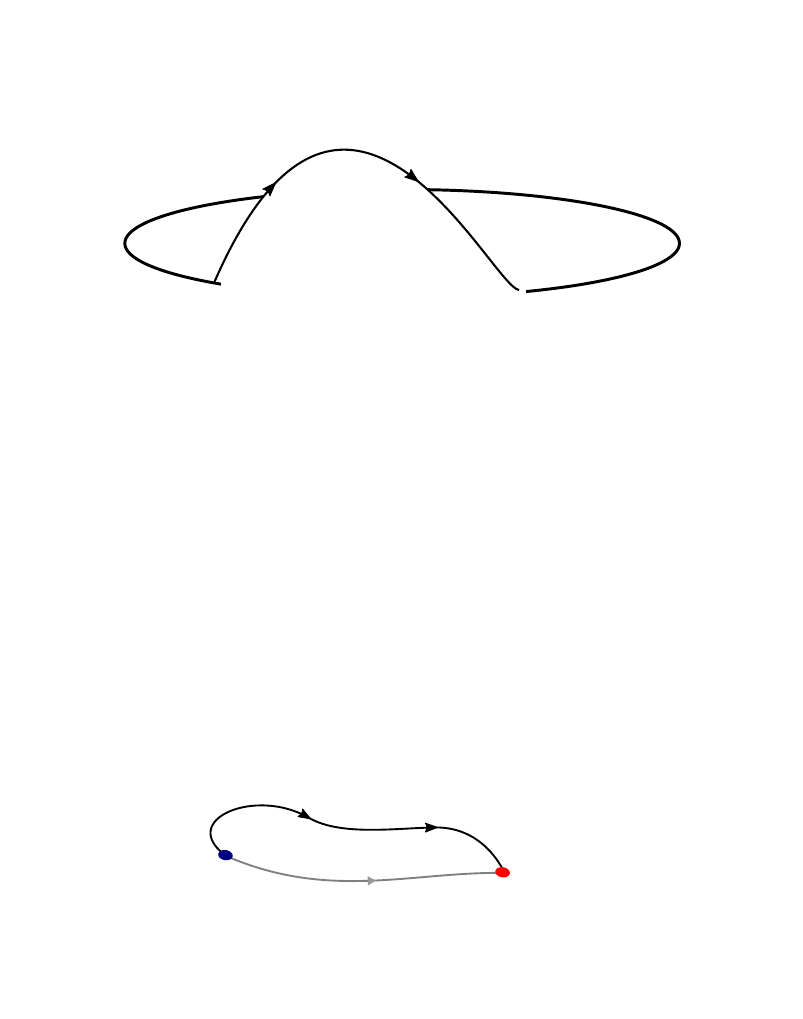
\caption{The translated point $\rho(pr(z))$ of $\phi_h$ corresponds to an $(\R_{>0} \times \K_0)$-family of fixed points of $\exp(\lambda) \circ \Phi_{\widetilde{H}}$ - in blue. The Hamiltonian trajectory $\Phi_{\widetilde{H}}^t(pr(z))$ lifting the contact trajectory $\phi_h^t((\rho(pr(z)))$ does not necessarily preserve the level set $P_{\K}^{-1}(p)$, but rather lies in the cone of $P_{\K}^{-1}(p)$ - in black. The Reeb flow of $\alpha$ lifts to the $\K$-action on $P_{\K}^{-1}(p)$ - in gray.}
\label{fig1}
\end{center}
\end{figure}

\subsection{The generating families}{\label{sec3.3}}

In this section, we introduce the generating families from which we will derive our cohomology groups. We first recall the general construction of a generating function from a Hamiltonian symplectomorphism of $\C^n$, following \cite{Giv95}. We then apply it to the lifts of the previous section. Finally, we add the torus action to this construction and come to the notion of generating family. In contrast to the aforementioned paper, our generating functions are $\K_0$-invariant, and critical points are related to $\alpha$-translated points on $V$, rather than fixed points on $M$.

\subsubsection{General construction}{\label{sec3.3.1}}

Let $H : \C^n \times [0,1] \to \R$ be a time-dependent Hamiltonian, and $\lbrace \phi_H^t \rbrace_{t \in [0,1]}$ be the Hamiltonian isotopy generated by $H$. Dividing the interval $[0,1]$ into an even number of parts, say $2N$, we decompose the time-$1$ map $\phi_H := \phi_H^1$ as follows:
$$
\phi_H = \phi_{2N} \circ \cdots  \circ \phi_1,
$$
where $\phi_j := \phi_H^{\frac{j}{2N}} \circ (\phi_H^{\frac{j-1}{2N}})^{-1}$. If $N$ is big enough so that, for any $z$, $-1$ is not an eigenvalue of $\droit_z \phi_j$, the graph 
$$
Gr_{\phi_j} := \lbrace (z,\phi_j(z)) \ | \ z \in \C^n \rbrace \subset \overline{\C^n} \times \C^n,
$$
projects diffeomorphically onto the diagonal
$$
\Delta := \lbrace (z,z) \ | \ z \in \C^n \rbrace \subset \overline{\C^n} \times \C^n,
$$
where $\overline{\C^n}$ denotes the symplectic vector space $(\C^n, - \omega_{\text{std}})$. The linear symplectomorphism
$$
\begin{array}{lcl}
\Psi : \overline{\C^n} \times \C^n \to (T^* \C^n, - \droit (p\droit q)) \\
\quad \quad (z,w) \mapsto (\frac{z+w}{2}, i(z-w))
\end{array}
$$
sends the diagonal $\Delta$ to the zero-section $0_{\C^n} \subset T^* \C^n$, and therefore $\Psi(Gr_{\phi_j})$ is the graph of a closed $1$-form. This form is exact (either because $H^1(\C^n;\R) = \lbrace 0 \rbrace$ or because $\phi_j$ is Hamiltonian): 
$$
\Psi(Gr_{\phi_j}) = Gr_{\droit \mathcal{H}_j}.
$$
The function $\mathcal{H}_j$ is a \textit{generating function} for the Lagrangian submanifold $\Psi(Gr_{\phi_j})$. In particular, the critical points of $\mathcal{H}_j$ are, through $\Psi$, in one-to-one correspondence with the points of the intersection $Gr_{\phi_j} \cap \Delta$, that is with the fixed points of $\phi_j$.

Consider now the Lagrangian product
$$
Gr_{\phi_1} \times \cdots  \times Gr_{\phi_{2N}} \subset (\overline{\C^n} \times \C^n)^{2N}.
$$
Applying the above to each component and the identification $(T^* \C^n)^{2N} = T^* \C^{2nN}$, we can write
$$
\underset{j=1}{\overset{2N} \prod} \Psi(Gr_{\phi_j}) := \Psi(Gr_{\phi_1}) \times \cdots  \times \Psi(Gr_{\phi_{2N}}) = Gr_{\droit \mathcal{H}},
$$
were 
$$
\begin{array}{lcl}
\mathcal{H} : \C^{2nN} \to \R\\
\ \  (x_1,\hdots ,x_{2N}) \mapsto \underset{j=1}{\overset{2N} \sum} \mathcal{H}_j(x_j).
\end{array}
$$
The critical points of $\mathcal{H}$ are in one-to-one correspondence with the fixed points of the product
$$
\underset{j=1}{\overset{2N} \prod} \phi_j := \phi_1 \times \cdots  \times \phi_{2N} : (\C^n)^{2N} \to (\C^n)^{2N}.
$$
Yet, they do not correspond to fixed points of $\phi_H$, which are rather in one-to-one correspondence with the solutions of the equation
$$
(z_2,\hdots ,z_{2N},z_1) = (\phi_1(z_1),\hdots ,\phi_{2N}(z_{2N})).
$$
The graph $Gr_q \subset (\overline{\C^n})^{2N} \times (\C^n)^{2N} \simeq (\overline{\C^n} \times \C^n)^{2N}$ of the "twisted" cyclic shift \footnote{This twist will be convenient in the sequel in order to ensure the non-degeneracy of a certain quadratic form.} 
$$
\begin{array}{lcl}
q : (\C^n)^{2N} \to (\C^n)^{2N} \\
(z_1,\hdots ,z_{2N}) \mapsto (z_2,\hdots ,z_{2N},-z_1)
\end{array}
$$
corresponds, through $\underset{j=1}{\overset{2N} \prod} \Psi$, to a Lagrangian subvector space of $T^* \C^{2nN}$, which has a generating quadratic form. Since we have decomposed $\phi_H$ into an even number of parts and have a factor $-1$ in the map $q$ above, the graph $Gr_q$ intersects both the multi-diagonal and multi-antidiagonal
$$
(\pm \Delta)^{2N} := \lbrace (z_1,\pm z_1,\hdots ,z_{2N}, \pm z_{2N}) \ | \ z_j \in \C^n \rbrace
$$
only at the origin. The latter are sent, through $\underset{j=1}{\overset{2N} \prod} \Psi$, to the zero-section $0_{\C^{2nN}}$ and the fiber $\lbrace 0 \rbrace \times \C^{2nN} \subset T^* \C^{2nN}$ respectively. Therefore, in $T^* \C^{2nN}$, we can write
$$
\underset{j=1}{\overset{2N} \prod} \Psi (Gr_q) = Gr_{\droit \mathcal{Q}},
$$
where $\mathcal{Q} : \C^{2nN} \to \R$ is a \textit{non-degenerate} quadratic form. The intersection points of $Gr_q$ with $\underset{j=1}{\overset{2N} \prod} Gr_{\phi_j}$ are in one-to-one correspondence with the fixed points of the Hamiltonian symplectomorphism
$$
-Id_{\C^n} \circ \phi_H.
$$
Consider the function
$$
\F^{(N)} : \C^{2nN} \to \R, \quad \F^{(N)} := \mathcal{Q} - \mathcal{H}.
$$
The critical points of $\F^{(N)}$ are in one-to-one correspondence with the fixed points of $- Id_{\C^n} \circ \phi_H$. The function $\F^{(N)}$ is called the \textbf{generating function} associated with the decomposition $\phi_H = \phi_{2N} \circ \cdots  \circ \phi_1$.\\

\subsubsection{Generating functions for the Hamiltonian lifts}{\label{sec3.3.2}}

Let $h : V \times [0,1] \to \R$ be a contact Hamiltonian, and $\widetilde{H} : \C^n \times [0,1] \to \R$ a Hamiltonian lift of $h$. We apply the construction of the previous section to the Hamiltonian symplectomorphism
$$
\exp(\lambda) \circ \Phi_{\widetilde{H}}, \quad \lambda \in \liek
$$
from section \ref{sec3.2.2}. Consider a decomposition
$$
\Phi_{\widetilde{H}} = \Phi_{2N_1} \circ \cdots  \circ \Phi_1
$$
of the Hamiltonian symplectomorphism $\Phi_{\widetilde{H}}$ into $2N_1$ small parts $\Phi_j := \Phi_{\widetilde{H}}^{\frac{j}{2N_1}} \circ (\Phi_{\widetilde{H}}^{\frac{j-1}{2N_1}})^{-1}$, and similarly, a decomposition
$$
\exp(\lambda) = \underbrace{\exp(\frac{\lambda}{2N_2}) \circ \cdots  \circ \exp(\frac{\lambda}{2N_2})}_{2N_2-\text{times}}
$$
of the Hamiltonian symplectomorphism $\exp(\lambda)$. We denote by
$$
\F_{\lambda}^{(N)} := \mathcal{Q} - \mathcal{H}_{\lambda}: \C^{2nN} \to \R, \quad N = N_1+N_2, 
$$
the generating function associated with the decomposition 
$$
\exp(\lambda) \circ \Phi_{\widetilde{H}} = \underbrace{\exp(\frac{\lambda}{2N_2}) \circ \cdots  \circ \exp(\frac{\lambda}{2N_2})}_{2N_2-\text{times}} \circ \Phi_{2N_1} \circ \cdots  \circ \Phi_1.
$$
The function $\mathcal{H}_{\lambda}$ is of the form
$$
\mathcal{H}_{\lambda}(x_1,\hdots ,x_{2N}) := \underset{j=1}{\overset{2N_1} \sum} \mathcal{H}_j(x_j) + \underset{j=2N_1+1}{\overset{2N} \sum} \mathcal{T}_{\lambda}(x_j), \quad x_j \in \C^n,
$$
where $\mathcal{H}_j$ and $\mathcal{T}_{\lambda}$ are the generating functions of $\Phi_j$ and $\exp(\frac{\lambda}{2N_2})$ respectively. Moreover, a direct computation shows that
$$
\mathcal{T}_{\lambda}(x_j) = \underset{k=1}{\overset{n} \sum} \tan (\frac{\pi \lambda_k}{2N_2})|q_j^k|^2, \quad \text{where} \quad x_j = (q_j^1,\hdots ,q_j^n) \in \C^n \quad \text{and} \quad \iota(\lambda) = (\lambda_1,\hdots ,\lambda_n) \in \R^n.
$$

Let us list several properties of the function $\F_{\lambda}^{(N)}$:
\begin{enumerate}
\item the Hamiltonian lift $\widetilde{H}$ is smooth on $\C^n \setminus \lbrace 0 \rbrace$, and is $C^2$ at $0$ only if it is quadratic. However, it is homogeneous of degree $2$, and therefore it is $C^1$ on $\C^n$ with Lipschitz derivative near $0$. Hence, for any $\lambda \in \liek$, $\F_{\lambda}^{(N)}$ is $C^{1,1}$ on $\C^{2nN}$, and smooth on $(\C^n \setminus \lbrace 0 \rbrace)^{2N}$;

\item for any $\lambda \in \liek$, $\F_{\lambda}^{(N)}$ is homogeneous of degree $2$ and $\K_0$-invariant. In particular, the critical points of $\F_{\lambda}^{(N)}$ appear as \textit{$\R_{>0}$-lines of $\K_0$-orbits (or $(\R_{>0} \times \K_0)$-families) in $\C^{2nN}$}, and have \textit{critical value $0$};

\item the function $\F_{\lambda}^{(N)}$ is well-defined as long as $\iota(\lambda) \in (-N_2, N_2)^n \subset \R^n$;

\item{\label{p1}} the family $\lambda \mapsto \F_{\lambda}^{(N)}$ decreases in positive directions: for any $s \in \liek$ such that $\iota(\lambda + s) \in (-N_2, N_2)^n$ and $\iota(s) \in \R_{>0}^n$, we have
$$
\F_{\lambda + s}^{(N)} \leq  \F_{\lambda}^{(N)};
$$

\item for any $\lambda \in \liek$, the critical points of $\F_{\lambda}^{(N)}$ are in one-to-one correspondence with the fixed points of the Hamiltonian symplectomorphism
$$
-Id_{\C^n} \circ \exp(\lambda) \circ \Phi_{\widetilde{H}}.
$$
\end{enumerate}
The symplectomorphism $- Id_{\C^n}$ is Hamiltonian and $\K$-equivariant. It preserves the sphere $S_p$ and the contact form $\alpha_p$, as well as the level set $P_{\K}^{-1}(p)$. Therefore, it projects to a contactomorphism $g \in \text{Cont}_0(V,\xi)$. In particular, any estimate for the number of $\alpha$-translated points of all the compositions of $g$ with contactomorphisms in $\text{Cont}_0(V,\xi)$ will give rise to the same estimate for the contactomorphisms in $\text{Cont}_0(V,\xi)$ themselves. In other words, the twist by $-Id_{\C^n}$, added so that the quadratic form $\mathcal{Q}$ becomes non-degenerate, won't affect the estimation on the number of $\alpha$-translated points.

\subsubsection{Adding the torus action}{\label{sec3.3.3}}

We are looking to count the number of $\alpha$-translated points of the time $1$-map $\phi_h$ on $V$. By the discussions of sections \ref{sec3.2.2} and \ref{sec3.3.2}, we can look for fixed points of the composition
$$
-Id_{\C^n} \circ \exp(\lambda) \circ \Phi_{\widetilde{H}},
$$
for all values of $\lambda$. To that aim, we shall consider $\liek$ as the space of Lagrange multipliers. For any subset $\Lambda_N \subset \liek$ such that $\iota(\Lambda_N) \subset (-N_2,N_2)^n$, we can consider the following function
$$
\F_N : \C^{2nN} \times \Lambda_N \to \R, \quad \F_N(x,\lambda) := \F_{\lambda}^{(N)}(x).
$$
Let $S_N$ denote the unit sphere $S^{4nN-1} \subset \C^{2nN}$, and $F_N$ be the restriction of $\F_N$ to $S_N \times \Lambda_N$. We call $F_N$ (resp. $\F_N$) the \textbf{generating family} (resp. \textbf{homogeneous generating family}) associated with the decomposition $\Phi_{\widetilde{H}} = \Phi_{2N_1} \circ \cdots  \circ \Phi_1$. In the sequel, \textit{we fix this decomposition, and for any $N > N_1$, we take $\Lambda_N$ to be a cube in $\liek$ with boundary $\partial \Lambda_N$, centered at the origin, of fixed size growing linearly with $N$, and such that} 
$$
\underset{N}{\cup} \Lambda_N = \liek.
$$

\begin{remarksubsec} 
\normalfont In the sequel we will study the equivariant homotopy type of sublevel sets of the generating family relatively to the boundary $\partial \Lambda_N$. When studying the regularity of $\F_N$ and the critical point sets of the functions $\F_{\lambda}^{(N)}$, we will keep in mind that they are actually defined for all $\lambda \in \liek$ such that $\iota(\lambda) \in (-N_2,N_2)^n$.
\end{remarksubsec}

We are looking for critical points of the functions $\F_{\lambda}^{(N)}$ that lie in $\R_{>0}$-lines that, through $\Psi$, intersect the level set $P_{\K}^{-1}(p)$. By homogeneity, any critical point of $\F_{\lambda}^{(N)}$ has critical value $0$, and moreover the zero-set $F_N^{-1}(0) \subset S_N \times \Lambda_N$ is $\K_0$-invariant. Consider the function
$$
\begin{array}{lcl}
\hat{p}_N : F_N^{-1}(0) \to \R.\\
\quad \quad \ (x,\lambda) \mapsto p(\lambda)
\end{array}
$$
It is $\K_0$-invariant. Recall that we have denoted by $g$ the contactomorphism of $\text{Cont}_0(V,\xi)$ induced by the restriction $-Id_{\C^n | P_{\K}^{-1}(p)}$. We have the following contact analogue of \cite[Proposition $4.3$]{Giv95}.
\begin{propsubsec}{\label{prop3.3.1}}
$0$ is a regular value of $F_N$, and to any critical $\K_0$-orbit of $\hat{p}_N$, there corresponds an $\alpha$-translated point of $g \circ \phi_h$.
\begin{proof}
Notice first that by homogeneity of $\F_N$, we have $\droit_x \F_{\lambda}^{(N)} = \droit_x F_{\lambda}^{(N)}$ for all $x \in (F_{\lambda}^{(N)})^{-1}(0)$. Let $(x,\lambda) \in F_N^{-1}(0)$ be a critical point of $\F_N$. Then $x \in S_N$ is a critical point of $\F_{\lambda}^{(N)}$. On $\C^n$, it corresponds to a fixed point $z$ of the decomposition
$$
- Id_{\C^n} \circ \underbrace{\exp(\frac{\lambda}{2N_2}) \circ \cdots  \circ \exp(\frac{\lambda}{2N_2})}_{2N_2 \text{ times}} \circ \Phi_{2N_1} \circ \cdots  \circ \Phi_1.
$$
Let us denote by $(z_1,\hdots ,z_{2N} = -z_1)$ the corresponding discrete trajectory in $\C^{2nN}$, that is $z_1=z$, and $z_j$ is obtained by applying the $(j-1)$-th symplectomorphism of the above decomposition to $z_{j-1}$. Choose coordinates on $\liek \simeq \R^k$, and write $\lambda = (\lambda_1,\hdots ,\lambda_k)$. By the Hamilton-Jacobi equation, the derivative of $\F_N$ in $\lambda$ is given by minus the Hamiltonian associated with the infinitesimal action of $\exp(\lambda)$ on the $2N_2$ last coordinates of $x$. Through $\Psi$, this means that
$$
\frac{\partial \F_N}{\partial \lambda_j} (x,\lambda) = - P_{\K}^j(z_{2N_1+1}) - \cdots  - P_{\K}^j(z_{2N}),
$$ 
where $P_{\K} = (P_{\K}^1,\hdots ,P_{\K}^k) : \C^n \to \R^{k*}$. On the other hand, we have $z_{2N_1+i} = \exp(\frac{\lambda}{2N_2})z_{2N_1+i-1}$, for all $i = 1,\hdots ,2N_2$. Since $P_{\K}^j$ is $\K$-invariant, we obtain
$$
\frac{\partial \F_N}{\partial \lambda_j} (x,\lambda) = - N P_{\K}^j(z_{2N_1+1}).
$$
If $(x,\lambda)$ is a critical point of $\F_N$, then $P_{\K}^j(z_{2N_1+1}) = 0$, for all $j=1,\hdots ,k$. In particular, this means that $P(z_{2N_1+1}) \in \ker \iota^*$, and by compactness of the toric manifold $(M, \omega, \mathbb{T})$, this is possible only if $z_{2N_1+1} = 0$. By homogeneity of the symplectomorphisms in the decomposition above, this implies that $z = 0$, and thus $x=0$, which is impossible on $S_N$. Thus $0$ is a regular value of $F_N$.

By the method of Lagrange multipliers, the critical points of $\hat{p}_N$ are those points $(x,\lambda) \in F_N^{-1}(0)$ such that $x$ is a critical point of $\F_{\lambda}^{(N)}$ and $\frac{\partial \F_N}{\partial \lambda_j} (x,\lambda)$ is proportional to $p_j$, where $p = (p_1,\hdots ,p_k) \in \R^{k*}$. By the discussion above, this means that $x$ corresponds to a discrete trajectory $(z_1,\hdots ,z_{2N} = -z_1)$ satisfying 
$$
P_{\K}^j(z_{2N_1+1}) \sim p_j, \text{ for all $j=1,\hdots ,k$}.
$$
But then, $P_{\K}^j(z_{2N}) = P_{\K}^j(z) \sim p_j$, that is $z$ lies in an $\R_{>0}$-line that intersects $P_{\K}^{-1}(p)$.
\end{proof}
\end{propsubsec}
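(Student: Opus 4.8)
The plan is to reduce both assertions to statements about discrete trajectories of the decomposition of $-\mathrm{Id}_{\C^n}\circ\exp(\lambda)\circ\Phi_{\widetilde H}$, and then to invoke compactness of $M$ through the condition $\ker\iota^{*}\cap\R^{n*}_{\geq 0}=\{0\}$ together with the converse part of the lifting discussion of section \ref{sec3.2.2}.

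First, for regularity of the level $0$, I would argue by contradiction, assuming $(x,\lambda)\in F_N^{-1}(0)$ is a critical point of $F_N$. Since $\F_N$ is homogeneous of degree $2$ and vanishes there, its radial derivative vanishes as well, so $(x,\lambda)$ is a critical point of the homogeneous family $\F_N$ and $x\in S_N$ is a critical point of $\F_\lambda^{(N)}$. By the construction of section \ref{sec3.3.2}, $x$ corresponds to a discrete trajectory $(z_1,\dots,z_{2N}=-z_1)$ realizing a fixed point of $-\mathrm{Id}_{\C^n}\circ\exp(\lambda)\circ\Phi_{\widetilde H}$ along the chosen decomposition, with $z_{2N_1+1}=\Phi_{2N_1}\circ\cdots\circ\Phi_1(z_1)$. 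The crucial step is to evaluate $\partial\F_N/\partial\lambda_j$ via the Hamilton--Jacobi identity: this derivative is minus the Hamiltonian generating the infinitesimal $\exp(\lambda)$-action, summed over the last $2N_2$ nodes of the trajectory; because those nodes are related by the linear, $\K$-equivariant maps $\exp(\lambda/2N_2)$ and $P_\K$ is $\K$-invariant, the sum collapses to $\partial\F_N/\partial\lambda_j(x,\lambda)=-N\,P_\K^j(z_{2N_1+1})$, with the explicit form of $\mathcal T_\lambda$ pinning down the constant. Vanishing of all these derivatives forces $P(z_{2N_1+1})\in\ker\iota^{*}$; since $P$ takes values in $\R^{n*}_{\geq 0}$, compactness of $M$ yields $P(z_{2N_1+1})=0$, hence $z_{2N_1+1}=0$. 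As every map in the decomposition is a diffeomorphism fixing the origin, $z_1=0$, so $x=0$, contradicting $x\in S_N$.

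Second, once $F_N^{-1}(0)$ is known to be a manifold, I would apply the method of Lagrange multipliers to $\hat p_N=p(\lambda)$ on it: a critical $\K_0$-orbit is represented by $(x,\lambda)$ with $x$ a critical point of $\F_\lambda^{(N)}$ and $d_\lambda\F_N(x,\lambda)$ proportional to $p$. Feeding in the identity $d_\lambda\F_N(x,\lambda)=-N\,P_\K(z_{2N_1+1})$ from the previous step, this says $P_\K(z_{2N_1+1})$ is proportional to $p$; by $\K$-invariance of $P_\K$ and homogeneity of the decomposition the same holds for $z_1=z$, i.e. $z$ lies on a ray meeting $P_\K^{-1}(p)$. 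The converse discussion at the end of section \ref{sec3.2.2} then identifies $\rho(pr(z))$ as an $\alpha$-translated point of $\phi_h$, and every $\alpha$-translated point arises this way; since both sides are $\K_0$-invariant the correspondence descends to $\K_0$-orbits.

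The main obstacle I expect is the Hamilton--Jacobi computation of the $\lambda$-derivative: one must track carefully which block of the $2N$ nodes carries the $\exp(\lambda)$-steps, handle the homogeneity and $\K_0$-invariance bookkeeping that lets $F_N$ and $\F_N$ be used interchangeably at the zero set, and keep in mind the limited regularity of $\F_N$ ($C^{1,1}$ only, smooth off the coordinate cross) --- this last point is harmless here because the critical points located all sit at the origin. Getting the precise sign and constant in $\partial\F_N/\partial\lambda_j=-N\,P_\K^j(z_{2N_1+1})$ is the one genuinely computational ingredient.
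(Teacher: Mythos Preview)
Your proposal is correct and follows essentially the same route as the paper: homogeneity to pass from $F_N$ to $\F_N$ on the zero set, the Hamilton--Jacobi identity collapsing $\partial\F_N/\partial\lambda_j$ to a multiple of $P_\K^j(z_{2N_1+1})$ via $\K$-invariance, the compactness condition $\ker\iota^*\cap\R^{n*}_{\geq 0}=\{0\}$ to force $z_{2N_1+1}=0$, and Lagrange multipliers for the second assertion. The anticipated obstacles (the Hamilton--Jacobi bookkeeping and the $C^{1,1}$ regularity) are exactly the ones the paper navigates, and your treatment of them matches.
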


\subsection{From a decomposition to another}{\label{sec3.4}}

Our cohomology group is defined as a limit in $N \to \infty$ of equivariant cohomology groups associated with the generating families $F_N$. Therefore we must describe how $F_N$ changes when $N$ grows. The key ingredient is that we can deform the generating functions in a controllable way, as long as the front of the associated generating family remains unchanged during the deformation. Recall that the front of $F_N$ is given by: 
$$
L_N := \lbrace \lambda \in \Lambda_N \ | \ 0 \text{ is a singular value of } F_{\lambda}^{(N)} \rbrace.
$$
By construction of $F_N$, it will remain unchanged as long as the time-$1$ map $\Phi_{\widetilde{H}}$ remains unchanged as well. We closely follow \cite{Giv95}, and begin with an observation. Let $\Phi_H = \Phi_{2N'} \circ \hdots  \circ \Phi_1$ be the decomposition of a Hamiltonian isotopy $\lbrace \Phi_H^t \rbrace_{t \in [0,1]}$, such that the first $2(N'-N)$ parts consist of a loop
$$
Id_{\C^n} = \Phi_{2(N'-N)} \circ \cdots  \circ \Phi_1.
$$
We relate the generating function $\F^{(N')}$ associated with the whole decomposition
$$
\Phi_H = \Phi_{2N'} \circ \cdots  \circ \Phi_1,
$$
to the generating functions $\F^{(N)}$ and $\G^{(N'-N)}$ associated with the parts
$$
\Phi_{2N'} \circ \cdots  \circ \Phi_{2(N'-N)+1} \quad \text{and} \quad \Phi_{2(N'-N)} \circ \cdots  \circ \Phi_1
$$
respectively. Consider the following deformation of the graph $Gr_q$ from section \ref{sec3.3.1}:
$$
\begin{array}{lcl}
Q_{\epsilon} := \bigg \lbrace (z_1,w_1,\hdots ,z_{2N'},w_{2N'}) \ |\  (z_j,w_j) \in \overline{\C^n} \times \C^n, & \quad w_j = z_{j+1} \text{ for } j \notin \{ 2(N'-N), 2N' \} \\ &  w_{2(N'-N)} = \epsilon(z_{2(N'-N)+1} - w_{2N'}) - z_1 \\ & w_{2N'} = \epsilon (w_{2(N'-N)} - z_1) - z_{2(N'-N)+1} \bigg \rbrace,
\end{array}
$$
where $\epsilon \in [0,1]$. Then $Q_{\epsilon}$ is a Lagrangian subspace of $(\overline{\C^n} \times \C^n)^{2N'}$, which is transversal both to the multi-diagonal $\Delta^{2N'}$ and the multi-antidiagonal $(-\Delta)^{2N'}$. Thus, it corresponds in $T^* \C^{2N'}$ to the graph of a non-degenerate quadratic form $\mathcal{Q}_{\epsilon}$. Consider the generating function
$$
\F^{(N')}_{\epsilon} := \mathcal{Q}_{\epsilon} - \mathcal{H}.
$$
Its critical points are in one-to-one correspondence with the points of the intersection
$$
\underset{j=1}{\overset{2N'} \prod} \Psi(Gr_{\phi_j}) \cap Q_{\epsilon}.
$$
For $\epsilon = 0$, $Q_{\epsilon}$ is the product $Q^1 \times Q^2$, where $Q^1$ and $Q^2$ correspond to the twisted cyclic shifts in  $(\C^n)^{2N}$ and $(\C^n)^{2(N'-N)}$ respectively. Since $\mathcal{H}$ is the direct sum of the generating functions associated with the small Hamiltonian symplectomorphisms $\Phi_j$ for $j = 1,\hdots ,2N'$, we get
$$
\F^{(N')}_0 = \F^{(N)} \oplus \G^{(N'-N)} : \C^{2nN} \times \C^{2n(N'-N)} \to \R.
$$
Moreover, $\F^{(N')}_0$ admits a critical point if and only if $\F^{(N)}$ does, if and only if $\F^{(N')}$ does. For $\epsilon = 1$, we have $Q_{\epsilon} = Gr_q$, so that
$$
\F^{(N')}_1 = \F^{(N')}.
$$
Finally, for $\epsilon \in (0,1)$, notice that
$$
\begin{array}{lcl}
(z_1,w_1,\hdots ,z_{2N'},w_{2N'}) \in \underset{j=1}{\overset{2N'} \prod} \Psi(Gr_{\phi_j}) \cap Q_{\epsilon}  \iff \\  (\frac{1}{\epsilon}z_1,\frac{1}{\epsilon}w_1,\hdots ,\frac{1}{\epsilon}z_{2(N'-N)},\frac{1}{\epsilon}w_{2(N'-N)},z_{2(N'-N)+1},w_{2(N'-N)+1},\hdots ,z_{2N'},w_{2N'}) \in \underset{j=1}{\overset{2N'} \prod} \Psi(Gr_{\phi_j}) \cap Gr_q,
\end{array}
$$
where $q$ is the twisted cyclic shift in $\C^{2nN'}$. In other words, the critical points of $\F_{\epsilon}^{(N')}$ are in one-to-one correspondence with the critical points of $\F^{(N')}$, for all $\epsilon \in [0,1]$.

Observe now that given two decompositions 
\begin{align*}
\Phi_H^1 & = \Phi_{2N} \circ \cdots  \circ \Phi_1,\\
& = \Phi'_{2N'} \circ \cdots  \circ \Phi'_1
\end{align*}
of the same Hamiltonian isotopy $\lbrace \Phi_H^t \rbrace_{t \in [0,1]}$ of $\C^n$ with, say, $N' > N$, one can always use a reparametrization $H_t^s$ of $H_t$, so that $H_t^0 = H_t$, and $H_t^1$ generates the Hamiltonian symplectomorphism
$$
\Phi_{H^1}^1 = \Phi_{2N} \circ \cdots  \circ \Phi_1 \circ \underbrace{Id_{\C^n} \circ \cdots  \circ Id_{\C^n}}_{2(N'-N)-\text{times}}.
$$
In particular, consider the homogeneous generating family $\F_N$ associated with the decomposition 
$$
\Phi_{\widetilde{H}} = \Phi_{2N_1} \circ \cdots  \circ \Phi_1,
$$
where $\widetilde{H}$ is a Hamiltonian lift of the contact Hamiltonian $h$. We will denote by
$$\G_{\lambda}^{(K)} : \C^{2nK} \to \R$$ 
the generating function associated with the decomposition
$$
\underbrace{\exp(\frac{\lambda}{2K}) \circ \cdots  \circ \exp(\frac{\lambda}{2K})}_{2K-\text{times}},
$$
as defined in section \ref{sec3.3.1}. Let us also denote by $F_{N+K | \Lambda_N}$ (resp. $\F_{N+K | \Lambda_N}$) the restriction of the generating family $F_{N + K} : S_{N+K} \times \Lambda_{N+K} \to \R$ (resp. the homogeneous generating family $\F_{N + K} : \C^{2n(N+K)} \times \Lambda_{N+K} \to \R$) to $S_{N+K} \times \Lambda_N$ (resp. $\C^{2n(N+K)} \times \Lambda_N$). Note that the front of $F_{N+K|\Lambda_N}$ is equal to the front of $F_N$, that is we have $L_{N+K} \cap \Lambda_N = L_N$. 
% In the next proposition, by fiberwise, we mean in the fibers direction of the projection $\C^{n(N+K)} \times \Lambda_N \to \Lambda_N$.
\begin{propsubsec}{\label{prop3.4.1}}
There exists a homogeneous of degree $2$ and $\K_0$-invariant fiberwise $C^{1,1}$ homotopy between the restricted homogeneous family
$$
\F_{N+K |\Lambda_N}
$$
and the fiberwise direct sum
$$
\F_N \oplus_{\Lambda_N} \G_0^{(K)} : \C^{2n(N+K)} \times \Lambda_N \to \R,
$$
in a way that the front of the corresponding generating families on $S_{N+K} \times \Lambda_N$ remains unchanged during the deformation.
\begin{proof}
For any $\lambda \in \Lambda_N$, the generating functions $\F_{\lambda}^{(N+K)}$ and $\F_{\lambda}^{(N)}$ are associated respectively with the decompositions
$$
\begin{array}{lcl}
\exp(\lambda) \circ \Phi_{\widetilde{H}} = \underbrace{\exp(\frac{\lambda}{2(N_2+K)}) \circ \cdots  \circ \exp(\frac{\lambda}{2(N_2+K)})}_{2(N_2+K)-\text{times}} \circ \Phi_{2N_1} \circ \cdots  \circ \Phi_1\\ \\ \quad \quad \quad \quad \quad \quad \quad \quad \quad \quad \quad \quad \quad \text{ and} \\ \\
\exp(\lambda) \circ \Phi_{\widetilde{H}} = \underbrace{\exp(\frac{\lambda}{2N_2}) \circ \cdots  \circ \exp(\frac{\lambda}{2N_2})}_{2N_2-\text{times}} \circ \Phi_{2N_1} \circ \cdots  \circ \Phi_1.
\end{array}
$$
Up to a reparametrization of the $2(N_2+K)$ last factors, the first decomposition becomes
\begin{align*}
\begin{array}{lcl}
\exp(\lambda) \circ \Phi_{\widetilde{H}} & = 
\underbrace{\exp(\frac{\lambda}{2N_2}) \circ \cdots  \circ \exp(\frac{\lambda}{2N_2})}_{2N_2-\text{times}} \circ \underbrace{Id_{\C^n} \circ \cdots  \circ Id_{\C^n}}_{2K-times} \circ \Phi_{2N_1} \circ \cdots  \circ \Phi_1\\ 
& = \underbrace{\exp(\frac{\lambda}{2N_2}) \circ \cdots  \circ \exp(\frac{\lambda}{2N_2})}_{2N_2-\text{times}} \circ \Phi_{2N_1} \circ \cdots  \circ \Phi_1 \circ \underbrace{Id_{\C^n} \circ \cdots  \circ Id_{\C^n}}_{2K-times}.
\end{array}
\end{align*}

By the discussion above, the generating function associated with this last decomposition is $\K_0$-invariantly homotopic to $\F_{\lambda}^{(N)} \oplus \G_0^{(K)}$, and this homotopy does not change the front of the family. Thus, we obtain a homogeneous of degree $2$ and $\K_0$-invariant $C^{1,1}$ homotopy between $\F_{\lambda}^{(N+K)}$ and $\F_{\lambda}^{(N)} \oplus \G_0^{(K)}$. Since the time-$1$ map $\exp(\lambda) \circ \Phi_{\widetilde{H}}$ remains unchanged during the reparametrization, the front remains unchanged during the whole process. 
\end{proof}
\end{propsubsec}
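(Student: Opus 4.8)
The plan is to build the homotopy as a concatenation of two elementary moves, both already prepared in this section: a reparametrization of the decomposition that keeps the time-$1$ map fixed, and the loop-splitting deformation $Q_\epsilon$. I would first work over a fixed $\lambda\in\Lambda_N$ and only at the end note that all choices can be made independent of $\lambda$, so that the concatenation is a genuine homotopy of families on $\C^{2n(N+K)}\times\Lambda_N$. The $\K_0$-equivariance and degree-$2$ homogeneity will come for free: every symplectomorphism occurring in the decompositions ($\Phi_j$, $\exp(\lambda/\,\cdot\,)$, $Id_{\C^n}$) is $\K_0$-equivariant and $\R_{>0}$-equivariant, and the linear $\K_0$-action commutes with $\Psi$, with the twisted cyclic-shift construction, and with the scaling, so each move sends $\K_0$-invariant homogeneous families to $\K_0$-invariant homogeneous families.

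Stage one: reparametrization. The function $\F^{(N+K)}_\lambda$ is the generating function of $\exp(\lambda)\circ\Phi_{\widetilde H}=\underbrace{\exp(\tfrac{\lambda}{2(N_2+K)})\circ\cdots}_{2(N_2+K)}\circ\Phi_{2N_1}\circ\cdots\circ\Phi_1$. I would reparametrize the rotation part by a time change $a\colon[0,1]\to[0,1]$ depending only on $N_2,K$, so that in the resulting decomposition the first $2K$ rotation factors collapse to $Id_{\C^n}$ while the remaining $2N_2$ factors are each $\exp(\tfrac{\lambda}{2N_2})$; sliding the stationary sub-intervals (i.e.\ the $Id$ factors) past the $\Phi_j$'s is another reparametrization since $Id$ commutes with everything, and lands at $\underbrace{\exp(\tfrac{\lambda}{2N_2})\circ\cdots}_{2N_2}\circ\Phi_{2N_1}\circ\cdots\circ\Phi_1\circ\underbrace{Id_{\C^n}\circ\cdots\circ Id_{\C^n}}_{2K}$. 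Because these are reparametrizations of one and the same isotopy, the time-$1$ map is $\exp(\lambda)\circ\Phi_{\widetilde H}$ at every moment, so by the construction of the front (Proposition \ref{prop3.3.1} and the opening discussion of section \ref{sec3.4}) the front of the corresponding family on $S_{N+K}\times\Lambda_N$ does not move along this deformation; and $a$ being $\lambda$-independent, the deformation is uniform in $\lambda$.

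Stage two: loop-splitting. The decomposition just reached is exactly the shape treated in the loop-splitting paragraph preceding this proposition, with the trivial loop $\underbrace{Id_{\C^n}\circ\cdots\circ Id_{\C^n}}_{2K}$ in the first $2K$ slots. Feeding it into $Q_\epsilon$ produces a path $\F^{(N+K)}_{\epsilon,\lambda}$ with $\F^{(N+K)}_{1,\lambda}$ equal to the Stage-one output and $\F^{(N+K)}_{0,\lambda}=\F^{(N)}_\lambda\oplus\G^{(K)}_0$, where $\G^{(K)}_0$ is the non-degenerate quadratic generating function of $\underbrace{Id_{\C^n}\circ\cdots\circ Id_{\C^n}}_{2K}$; taking the fiberwise direct sum over $\Lambda_N$ this is $\F_N\oplus_{\Lambda_N}\G_0^{(K)}$. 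The rescaling $z\mapsto\tfrac1\epsilon z$ on the first $2nK$ coordinates puts the critical set of $\F^{(N+K)}_{\epsilon,\lambda}$ in bijection with that of $\F^{(N+K)}_{1,\lambda}$ for every $\epsilon\in[0,1]$, compatibly with the projection $\hat p$, so Proposition \ref{prop3.3.1} gives that the front on $S_{N+K}\times\Lambda_N$ is again constant in $\epsilon$; the quadratic forms $\mathcal Q_\epsilon$ do not involve $\lambda$, so this too is uniform in $\lambda$. Concatenating and reparametrizing the homotopy parameter yields the asserted fiberwise $\K_0$-invariant homotopy with unchanged front. I do not expect a deep obstacle — this is a torus-equivariant transcription of the bookkeeping in \cite[\S 4]{Giv1995} — but the point requiring genuine care is that $\widetilde H$ is only homogeneous of degree $2$, hence merely $C^1$ with Lipschitz derivatives at the origin (property (1) of section \ref{sec3.3.2}), so the homotopy must be built in the $C^{1,1}$ category using the gradient-flow deformations of section \ref{sec2.1}, and one must check that the reparametrization functions and the forms $\mathcal Q_\epsilon$ vary smoothly (resp.\ continuously) with $\lambda$, which holds precisely because all of this data can be chosen independent of $\lambda$.
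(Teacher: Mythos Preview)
Your proof is correct and follows essentially the same two-stage route as the paper: a reparametrization of the rotation factors to insert $2K$ identity factors (with the time-$1$ map fixed so the front is unchanged), followed by the $Q_\epsilon$ loop-splitting deformation to reach $\F_N\oplus_{\Lambda_N}\G_0^{(K)}$. You are in fact more explicit than the paper on two points it glosses over: the paper writes an unexplained ``$=$'' when moving the identity factors from the middle to the leftmost slots, whereas you correctly treat this as a further reparametrization; and you spell out the $\lambda$-independence and $\K_0$-equivariance of all moves. The one minor slip is invoking Proposition~\ref{prop3.3.1} for the constancy of the front along $Q_\epsilon$ --- that constancy comes directly from the critical-point bijection established in the $Q_\epsilon$ discussion, not from Proposition~\ref{prop3.3.1}.
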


\begin{remarksubsec}{\label{rem3.4.1}}
\normalfont Notice that the result above is independent of the reparametrization, since any two such reparametrizations are always homotopic. Moreover, one can show by a similar argument that the front of the generating family $F_N$ remains unchanged if one modifies the decomposition of $\Phi_{\widetilde{H}}$ \textit{into $2N_1$ parts}.
\end{remarksubsec}

\subsection{Sublevel sets and transversality}{\label{sec3.5}}

In this section, we introduce the sublevel sets which will be used to define the cohomology groups. Similarly as with the generating families, we describe how they behave when $N$ grows, using the results from section \ref{sec2.1}. In particular, we define a basis of $\C^{2nN}$ in which the quadratic generating functions associated with the torus action are diagonal, so that we have a canonical (independent of the given element of the torus) identification between their non-positive sublevel sets and their non-positive eigenspaces. Consider the following sublevel sets
$$
\F_N^- := \lbrace \F_N \leq 0 \rbrace, \quad F_N^- := \lbrace F_N \leq 0 \rbrace.
$$
For any $\nu \in \R$, we denote by $\Gamma_N(\nu)$ the intersection $\Lambda_N \cap p^{-1}(\nu)$, and define the sets
$$
\begin{array}{lcl}
\F_N^-(\nu) := \F_N^- \cap (\C^{2nN} \times \Gamma_N(\nu)), &
\partial \F_N^-(\nu) := \F_N^-(\nu) \cap (\C^{2nN} \times \partial \Gamma_N(\nu)), \\ \\
F_N^-(\nu) := F_N^- \cap (S_N \times \Gamma_N(\nu)), & 
\partial F_N^-(\nu) := F_N^-(\nu) \cap (S_N \times \partial \Gamma_N(\nu)),
\end{array}
$$
where
$$
\partial \Gamma_N(\nu) := \Gamma_N(\nu) \cap \partial \Lambda_N.
$$
Recall that the generating function $\F_{\lambda}^{(N)}$ is smooth on $(\C^n \setminus \lbrace 0 \rbrace)^{2N}$. If $x$ is a critical point of $\F_{\lambda}^{(N)}$, it corresponds, through the linear symplectomorphism $\Psi$ of section \ref{sec3.3.1}, to a solution of the equation
$$
(z_2,\hdots ,z_{2N}, -z_1) = (\gamma_1(z_1), \hdots , \gamma_{2N}(z_{2N})),
$$
where $\gamma_j = \Phi_j$ if $j \leq 2N_1$, and $\gamma_j = \exp(\frac{\lambda}{2N_2})$ otherwise. If $x$ lies on the coordinate cross, there exists $j$ such that $\gamma_j(z_j) = -z_j$. Since $\gamma_j$ is close to $Id_{\C^n}$, this can happen only if $z_j = 0$. Moreover, $\gamma_j(0) = 0$ for all $j$, and therefore $x = 0$. Thus $\F_{\lambda}^{(N)}$ is smooth at any non-zero critical points. In particular, \textit{the front $L_N$ is of zero-measure}.

Recall that the spectrum of a contactomorphism is the set of Reeb shifts of all its $\alpha$-translated points (equation $(\ref{eq4})$). The relation between the front $L_N$ and the spectrum $\Spec(g \circ \phi_h)$ of $g \circ \phi_h$ shall be understood as follows: the front $L_N$ is made of all the elements $\lambda \in \Lambda_N$ such that the Hamiltonian diffeomorphism
$$
-Id_{\C^n} \circ \exp(\lambda) \circ \Phi_{\widetilde{H}} : \C^n \to \C^n 
$$
admits a fixed point. However, this fixed point might not correspond to a point on $V$ (if it does not lie in an $\R_{>0}$-line that intersects $P_{\K}^{-1}(p)$), and therefore in particular to an $\alpha$-translated point of $g \circ \phi_h$. In contrast, the spectrum of $g \circ \phi_h$ is made of real numbers $\nu \in \R$ for which an $\alpha$-translated point with Reeb shift $\nu$ appears on $V$. By Propositions \ref{prop2.1.1} and \ref{prop3.3.1}, this happens if and only if there exist $N$ and $\lambda \in L_N$ such that $p(\lambda) = \nu$ and $\Gamma_N(\nu)$ is tangent to $L_N$ at $\lambda$. In other words, elements of $L_N$ correspond to fixed points on $\C^n$, whereas non-transverse intersections between $\Gamma_N(\nu)$ and $L_N$ correspond to $\alpha$-translated points of $\phi_h$ with Reeb shift $\nu$.

Note however that $\nu \notin \Spec(g \circ \phi_h)$ does not mean that the boundary $\partial \Gamma_N(\nu)$ is transversal to $L_N$. By Corollary \ref{cor2.1.1}, the set of $\nu$ such that $\Gamma_N(\nu)$ and $\partial \Gamma_N(\nu)$ are transversal to the front $L_N$ is of full measure. We will say that $\nu \in \R$ \textbf{is generic} if $\Gamma_N(\nu)$ \textit{and} $\partial \Gamma_N(\nu)$ are transversal to $L_N$ for all $N$ (we use here that a countable intersection of sets of full measure is of full measure). Note that this notion depends on $\Phi_{\widetilde{H}}$ and the family of cubes $\{ \Lambda_N \}_N$.

We have the following "homogeneous" version of \cite[Proposition $5.1$]{Giv95}.  
\begin{propsubsec}{\label{prop3.5.1}}
Let $\F : \C^M \to \R$ be a homogeneous of degree $2$ function, and $\hat{\F} : \C^{M+1} \to \R$ be its suspension
$$
\hat{\F}(x,z) := \F(x) + |z|^2.
$$
We denote by $\F^{\pm}$ and $\hat{\F}^{\pm}$ the sets
$$
\F^{\pm} := \lbrace x \in \C^M \ | \ \F(x) \geq 0 \ (\text{resp. $\leq 0$}) \rbrace, \quad \quad \hat{\F}^{\pm} := \lbrace (x,z) \in \C^{M+1} \ | \ \hat{\F}(x) \geq 0 \ (\text{resp. $\leq 0$}) \rbrace.
$$
Then there exist natural $\R_{>0}$-equivariant homotopy equivalences
$$
\hat{\F}^- \simeq \F^-, \quad \hat{\F}^+ \simeq \F^+ \times \C.
$$
Moreover, if $\F$ is invariant relatively to an $S^1$-action on $\C^M$, then the above homotopy equivalences can be made equivariant with respect to the product of the diagonal $S^1$-action on $\C^M$ with the standard $S^1$-action on $\C$. If $\F$ depends continuously on additional parameters, then the homotopy equivalences depend continuously on them.

\begin{proof}
Let us first consider the function 
$$
\hat{\F} : \C^M \times \R_{\geq 0} \to \R, \quad (x,r) \mapsto \F(x) + r^2
$$
and prove an analogue of the above statement in this setting, that is there exist natural $\R_{>0}$-equivariant homotopy equivalences
$$
\hat{\F}^- \simeq \F^- \quad \text{and} \quad \hat{\F}^+ \simeq \F^+ \times \R_{\geq 0}.
$$
The meridional contraction from the North pole $P = (0,1)$ of the unit sphere $S^{2M+1}$ preserves $\hat{\F}^- \cap (S^{2M-1} \times \lbrace 0 \rbrace)$, and therefore $\hat{\F}^- \cap S^{2M+1}$ retracts onto $\hat{\F}^- \cap (S^{2M-1} \times \lbrace 0 \rbrace)$. Extending this contraction homogeneously provides a deformation retraction from $\hat{\F}^-$ to $\F^-$ as well:
$$
\hat{\F}^- \simeq \F^-.
$$
Moreover, each meridional arc from the North pole to $\F^+ \cap S^{2M-1}$ lies in $\hat{\F}^+ \cap S^{2M+1}$, and therefore the same contraction provides a homotopy equivalence of pairs:
$$
(\hat{\F}^+, \F^+) \simeq (\C^M \times \R_{\geq 0}, \F^+).
$$
Now, the pairs $(\C^M \times \R_{\geq 0},\F^+)$ and $(\F^+ \times \R_{\geq 0},\F^+)$ are naturally homotopy equivalent, and therefore we have 
$$
(\hat{\F}^+, \F^+) \simeq (\F^+ \times \R_{\geq 0}, \F^+).
$$
Figure \ref{fig2} illustrates this deformation.

For the general case $z \in \C$, we view $\C^M \times \C$ as the quotient $\C^M \times \R_{\geq 0} \times S^1 / \sim$, with the identification $\C^M \times \lbrace 0 \rbrace \times S^1 \sim \C^M$. Then $\F^{\pm}$ and $\hat{\F}^{\pm}$ are simply given by their restrictions to $\C^M \times \R_{\geq 0}$ multiplied by $S^1$, with the relevant identifications. We obtain  
$$
\hat{\F}^- \simeq \F^-, \quad \hat{\F}^+ \simeq \F^+ \times \C.
$$
Since all the homotopies from above are carried out in a canonical way, they respect group actions and parametric dependence. 
\end{proof}
\end{propsubsec}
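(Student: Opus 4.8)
The plan is to reduce everything to the unit spheres using homogeneity, and then to write down explicit ``meridional'' deformation retractions along the great circles through the new pole $P := (0,1) \in \C^M\times\C$. Since $\F$ and $\hat{\F}$ are homogeneous of degree $2$, each of $\F^{\pm}$ and $\hat{\F}^{\pm}$ is a cone with vertex $0$, so the radial rescaling retracts it onto its intersection with the unit sphere; it therefore suffices to produce the asserted equivalences between the spherical slices $\hat{\F}^{\pm}\cap S^{2M+1}$ and $\F^{\pm}\cap S^{2M-1}$ (with an extra $\C$, resp.\ $S^1$, factor in the $+$ case) and then extend radially. Exactly as in the statement, I would first treat $z\in\R_{\geq 0}$, i.e.\ restrict to the closed upper hemisphere $D=\{(x,z)\in S^{2M+1}:z\geq 0\}$ with pole $P$ and equator $S^{2M-1}\times\{0\}$, and only at the very end recover the general case $z\in\C$.

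For $\hat{\F}^-$ I would parametrise a meridian of $D$ as $\theta\mapsto(\sin\theta\,\xi,\cos\theta)$ with $\xi\in S^{2M-1}$, $\theta\in[0,\pi/2]$, so that $\hat{\F}=\sin^2\theta\,\F(\xi)+\cos^2\theta$ along it. The one genuine computation is the sign check: from $\hat{\F}(\theta_2)-\hat{\F}(\theta_1)=(\sin^2\theta_2-\sin^2\theta_1)(\F(\xi)-1)$ one sees that if $\hat{\F}\leq 0$ at some parameter then $\F(\xi)<0<1$ and $\hat{\F}$ stays $\leq 0$ as $\theta$ increases toward $\pi/2$; since $\hat{\F}(P)=1>0$, the pole lies outside $\hat{\F}^-$, and the meridional contraction sliding every point of $D\setminus\{P\}$ toward the equator restricts to a deformation retraction of $\hat{\F}^-\cap D$ onto $\hat{\F}^-\cap(S^{2M-1}\times\{0\})=\F^-\cap S^{2M-1}$. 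Extending this homogeneously over $\C^M\times\R_{\geq 0}$ gives $\hat{\F}^-|_{z\geq 0}\simeq\F^-$. For $\hat{\F}^+$ the pole $P$ now lies inside $\hat{\F}^+$, the same sign computation shows $\hat{\F}$ is monotone along each meridian so that $\hat{\F}^+\cap D$ is star-shaped from $P$ and contains the whole meridian from $P$ to any $\xi$ with $\F(\xi)\geq 0$; extending homogeneously, this identifies the pair $(\hat{\F}^+|_{z\geq 0},\F^+)$ with $(\C^M\times\R_{\geq 0},\F^+)$ and hence, both ambient cones being contractible, with $(\F^+\times\R_{\geq 0},\F^+)$.

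To pass from $z\in\R_{\geq 0}$ to $z\in\C$ I would use the identification $\C^M\times\C=(\C^M\times\R_{\geq 0}\times S^1)/\!\sim$ that collapses $\C^M\times\{0\}\times S^1$ to $\C^M$: there $\F^{\pm}$ and $\hat{\F}^{\pm}$ are simply the $S^1$-products of their restrictions to $\C^M\times\R_{\geq 0}$, and inserting the previous step turns $\R_{\geq 0}$ into $\C$, yielding $\hat{\F}^-\simeq\F^-$ and $\hat{\F}^+\simeq\F^+\times\C$. Parametric continuity is automatic since every retraction above is given by a fixed coordinate formula (rescaling and meridional sliding) depending continuously on any parameters $\F$ carries. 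I do not expect a deep obstacle here: the statement is elementary, and the only thing requiring care is the equivariance bookkeeping. The subtle point is that $P=(0,1)$ is a fixed point for the $S^1$-action on $\C^M$ alone but \emph{not} for the diagonal action on $\C^{M+1}$; this is precisely why one must do the two-step reduction — on the hemisphere $D$ the contractions are equivariant for the $\C^M$-action (which leaves the $z\geq 0$ coordinate alone), and then the quotient description above, on which the diagonal action rotates $\C^M$ and translates the $S^1$-factor, upgrades the resulting equivalences to diagonally equivariant ones.
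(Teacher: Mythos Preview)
Your proof is correct and follows essentially the same approach as the paper: both reduce to the $z\in\R_{\geq 0}$ hemisphere first, use the meridional contraction from the pole $(0,1)$ (extended homogeneously) to get the retraction of $\hat\F^-$ and the pair equivalence for $\hat\F^+$, and then pass to $z\in\C$ via the quotient $\C^M\times\C=(\C^M\times\R_{\geq 0}\times S^1)/\!\sim$. Your explicit monotonicity computation along meridians and your careful discussion of why the two-step reduction is what makes the equivariance work (the pole not being fixed for the diagonal action) are details the paper leaves implicit, but the arguments are otherwise identical.
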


\begin{figure}[H]
\begin{center}
\def\svgwidth{0.6\textwidth}
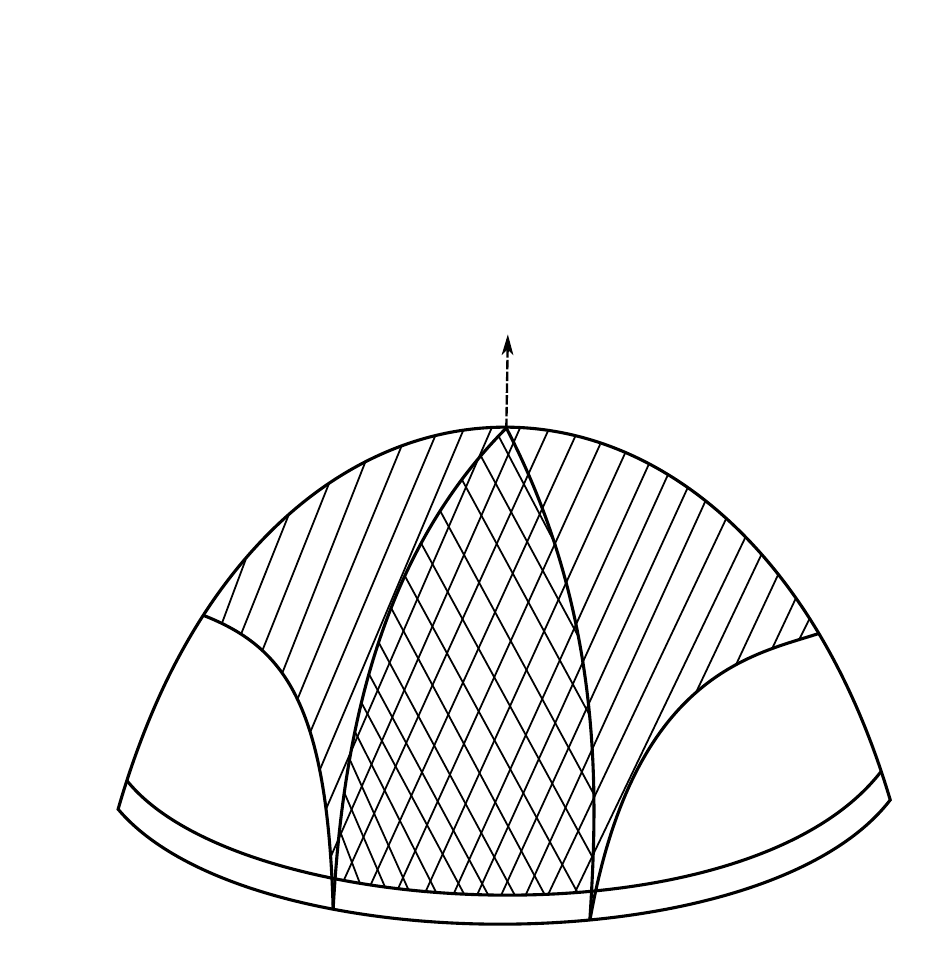
\caption[The meridional contraction from the North pole]{The meridional contraction from the North pole $P$ - in blue.}
\label{fig2}
\end{center}
\end{figure}

Recall that $\G_{\lambda}^{(N)}$ denotes the generating function of the decomposition
$$
\underbrace{\exp(\frac{\lambda}{2N}) \circ \cdots \circ \exp(\frac{\lambda}{2N})}_{2N-\text{times}},
$$
as defined in section \ref{sec3.3.1}. It is a quadratic form. Let us denote by $\G_{\lambda}^{(N) -}$ its non-negative eigenspace, after the following diagonalization.
\begin{lemmasubsec}{\label{lem3.5.1}}
There exist vectors $v_j^k$ in $\C^{2nN}$, and a linear isomorphism
$$
\C^{2nN} \simeq \underset{j,k}{{\bigoplus}} \C v_j^k, \quad j=1,\hdots ,n, \quad k=-N,\hdots ,N-1,
$$
such that:
\begin{enumerate}
\item the isomorphism is equivariant with respect to the diagonal action of the maximal torus $\mathbb{T}^n = \R^n / \Z^n$ on $\underset{2N-\text{times}}{\underbrace{\C^n \times \cdots \times \C^n}}$ (acting linearly on each factor $\C^n$) and the $\mathbb{T}^n$-action on $\underset{j,k}{{\bigoplus}} \C v_j^k$ defined as follows: for any $k$, $\mathbb{T}^n$ acts on the complex line generated by $v_j^k$ via its standard character $\chi_j : (e^{i\theta_1},\hdots ,e^{i\theta_n}) \mapsto e^{i\theta_j}$. In particular, the direct sum $\underset{k}{\oplus}\C v_j^k$ is $\mathbb{T}^n$-invariant;
\item for any $\lambda$, the quadratic form $\G_{\lambda}^{(N)}$ is diagonal in the basis $(v_j^k)_{j,k}$, and we have
$$
\dim(\G_{\lambda}^{(N)-}) = \underset{j=1}{\overset{n} \sum}2(N + \lfloor \lambda_j + \frac{1}{2} \rfloor), \quad \text{where} \quad \iota(\lambda) = (\lambda_1,\hdots ,\lambda_n) \in \Lambda_N,
$$
where $\lfloor . \rfloor$ denotes the integer part.
\end{enumerate}
\begin{proof}
Recall that for any $\lambda \in \liek \subset \R^n$, the quadratic form $\G_{\lambda}^{(N)}$ is of the form
$$
\G_{\lambda}^{(N)} = \mathcal{Q} - \mathcal{H}_{\lambda},
$$
where $\mathcal{H}_{\lambda} = \underset{j=1}{\overset{2N}{\bigoplus}} \mathcal{T}_{\lambda}$, and $\mathcal{T}_{\lambda}$ is the generating function of $\exp(\frac{\lambda}{2N})$. We have 
$$
\mathcal{H}_{\lambda}(q) = \underset{i=1}{\overset{n} \sum} \tan(\frac{\pi \lambda_i}{2N}) (|q_1^i|^2 + \cdots  + |q_{2N}^i|^2),
$$ 
where $q = (q_1^1,\hdots ,q_1^n,\hdots ,q_{2N}^1,\hdots ,q_{2N}^n) \in \C^{2nN}$, and moreover, a direct calculation shows that the non-degenerate quadratic form $\mathcal{Q}$ writes $\mathcal{Q}(q) = \langle C q, q \rangle$, where
$$
C = i(Id-A)(Id+A)^{-1}, \quad \quad A = \begin{bmatrix}
 
0&1& & & & &\\
 &.&.& & & &\\
 & &.&.& & &\\
 & & &.&.& &\\
 & & & &.&.&\\
 & & & & &.&1\\
-1 & & & & & & 0
\end{bmatrix} \in M_{2N \times 2N}(\C^{n}).
$$
For any $k = -N,\hdots ,N-1$, and $j=1,\hdots ,n$, we define the following vector:
$$
X_j^k = (e_j,e^{i\frac{(2k+1)\pi}{2N}} e_j, \hdots , e^{i(2N-1)\frac{(2k+1)\pi}{2N}} e_j) \in (\C^{n})^{2N},
$$
where $e_j = (0,\hdots ,0,1,0,\hdots ,0)$ is the $j$-th standard vector in $\C^n$. The reader may check that the following points hold:
\begin{itemize}
\item we have $i(Id-A)X_j^k = \tan(\frac{(2k+1)\pi}{4N})(Id+A)X_j^k$, and therefore the family $\lbrace v_j^k := (Id+A)(X_j^k) \rbrace_{j,k}$ is a $\C$-basis of $\C^{2nN}$ made of eigenvectors of $C$, with eigenvalues $\tan(\frac{(2k+1)\pi}{4N})$;
\item the maximal torus $\mathbb{T}^n$ acts on the complex line $\C v_j^k$ via the character $\chi_j$. In particular, $\C v_j^k$ is $\mathbb{T}^n$-invariant;
\end{itemize}
Put $V_j = \underset{k=-N}{\overset{N-1}{\oplus}} \C v_j^k$. Since $\mathcal{Q}$ is $\mathbb{T}^n$-invariant, each two lines $\C v_i^k$ and $\C v_j^l$ are orthogonal whenever $i \neq j$. It remains to diagonalize the restrictions $\mathcal{Q}_{|V_j}$. The action of $\mathbb{T}^n$ on the basis $(v_j^{-N},\hdots ,v_j^{N-1})$ is given by $\chi_j$. Since it is diagonal and linear, one can find a new basis (still denoted by $(v_j^{-N},\hdots ,v_j^{N-1})$) of $V_j$, on which $\mathbb{T}^n$ acts via the character $\chi_j$, and such that $C_{| V_j}$ is diagonal. It remains to concatenate these bases for $j=1,\hdots ,n$. 

For the second point, the spectrum of $\mathcal{Q}$ is given by
$$
\Spec(\mathcal{Q}) = \lbrace \tan(\frac{\pi (2k+1)}{4N}) \ | \  -N \leq k \leq N-1 \rbrace,
$$
and therefore the spectrum of $\G_{\lambda}^{(N)}$ is given by
$$
\Spec(\G_{\lambda}^{(N)}) = \lbrace \tan(\frac{\pi (2k+1)}{4N}) - \tan(\frac{\pi \lambda_j}{2N}) \ | \  -N \leq k \leq N-1, \  j=1,\hdots ,n \rbrace.
$$
We have
$$
\tan(\frac{\pi (2k+1)}{4N}) - \tan(\frac{\pi \lambda_j}{2N}) \leq 0 \iff k \leq \lambda_j - \frac{1}{2}.
$$
There are $N + \lfloor \lambda_j + \frac{1}{2} \rfloor$ such $k$'s, and therefore the real dimension of $\G_{\lambda,N}^-$ is:
$$
\dim(\G_{\lambda}^{(N)-}) = \underset{j=1}{\overset{n}\sum}2(N + \lfloor \lambda_j + \frac{1}{2} \rfloor),
$$
as claimed.
\end{proof}
\end{lemmasubsec}
This change of basis is canonical, that is it depends only on $N$ and the \textit{non-degenerate} quadratic form $\mathcal{Q}$. Consider the direct sum $\F_N \oplus \G_0^{(K)}$. The non-degenerate quadratic form $\G_0^{(K)}$ has $2nK$ negative eigenvalues, and is diagonal in the basis of the above proposition (note here the importance of the non-degeneracy of $\mathcal{Q}$). Applying Proposition \ref{prop3.5.1} multiple times, we get
\begin{corsubsec}{\label{cor3.5.1}}
There exists an $(\R_{>0} \times \K_0)$-equivariant homotopy equivalence
$$
\lbrace \F_N \oplus \G_0^{(K)} \leq 0 \rbrace \simeq \F_N^- \times \G_0^{(K)-} \simeq \F_N^- \times \C^{nK}.
$$
\end{corsubsec}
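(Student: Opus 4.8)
The plan is to reduce everything to Proposition \ref{prop3.5.1}, applied repeatedly, once the quadratic form $\G_0^{(K)}$ has been put in normal form equivariantly. First I would invoke the preceding lemma with $N$ replaced by $K$ and $\lambda = 0$: it produces a $\mathbb{T}$-equivariant (hence $\K_0$-equivariant) basis $(v_j^k)$ of $\C^{2nK}$ in which $\G_0^{(K)}$ is diagonal, with nonzero eigenvalues $-\tan\big(\tfrac{\pi(2k+1)}{4K}\big)$; rescaling each complex eigenline $\C v_j^k$ by the square root of the modulus of its eigenvalue (a rescaling that commutes with the torus action) yields an equivariant linear isomorphism $\C^{2nK}\cong \C^{nK}_u\oplus\C^{nK}_w$ under which $\G_0^{(K)}(u,w) = \sum_i|u_i|^2-\sum_j|w_j|^2$, the subspace $\C^{nK}_w$ being exactly the negative eigenspace $\G_0^{(K)-}$. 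Its complex dimension is $nK$ by the dimension formula in the lemma, since $\lfloor \tfrac12\rfloor = 0$; equivalently, $\tan\big(\tfrac{\pi(2k+1)}{4K}\big)<0$ for exactly the $K$ values $k=-K,\dots,-1$. Since $\G_0^{(K)}$ does not involve $\lambda$, this identification is fiberwise over $\Lambda_N$ and carries $\F_N\oplus_{\Lambda_N}\G_0^{(K)}$ to the function $(x,u,w,\lambda)\mapsto \F_N(x,\lambda)+\sum_i|u_i|^2-\sum_j|w_j|^2$ together with its sublevel set $\{\cdot\le 0\}$.

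Next I would strip off the variables one at a time. To remove a coordinate $w_j$, I note that the function at hand has the form $G_\lambda - |w_j|^2$ with $G_\lambda$ homogeneous of degree $2$, $\K_0$-invariant and continuous in $\lambda$ (the remaining $x$, the $u_i$'s, and the not-yet-removed $w$'s playing the role of the space $\C^M$ in Proposition \ref{prop3.5.1}, $w_j$ the suspension variable, and $\lambda$ an additional parameter). Since $-(G_\lambda - |w_j|^2) = (-G_\lambda) + |w_j|^2$, the set $\{G_\lambda-|w_j|^2\le 0\}$ is the $\{\cdot\ge 0\}$-set of the positive suspension of $-G_\lambda$, so the second assertion of Proposition \ref{prop3.5.1} ($\widehat\F^+\simeq\F^+\times\C$) gives a fiberwise $\K_0$-equivariant homotopy equivalence $\{G_\lambda-|w_j|^2\le 0\}\simeq\{G_\lambda\le 0\}\times\C$, continuous in $\lambda$. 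Iterating $nK$ times eliminates all the $w_j$ and accumulates a factor $\C^{nK}$, reducing to $\{(x,u,\lambda): \F_N(x,\lambda)+\sum_i|u_i|^2\le 0\}\times\C^{nK}$. Then, to remove each $u_i$, the function is a genuine positive suspension $\F+|u_i|^2$, and the first assertion of Proposition \ref{prop3.5.1} ($\widehat\F^-\simeq\F^-$) contracts it away without creating new factors; after $nK$ further steps only $\F_N^-$ remains. Stringing these equivalences together gives $\{\F_N\oplus_{\Lambda_N}\G_0^{(K)}\le 0\}\simeq \F_N^-\times\C^{nK}$, fiberwise and $\K_0$-equivariantly over $\Lambda_N$, and re-identifying $\C^{nK}_w$ with $\G_0^{(K)-}$ from the first paragraph yields the stated chain of equivalences.

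There is essentially no genuine obstacle here; the argument is bookkeeping. The two points I would be careful about are: (i) checking that at every intermediate step the function is still homogeneous of degree $2$ and $\K_0$-invariant, so that Proposition \ref{prop3.5.1} applies (this holds because one only ever adds or deletes norm-squared terms, each of which is $\K_0$-invariant, as the $u_i$- and $w_j$-lines carry characters $\chi_j$); and (ii) making sure the equivalences are $\K_0$-equivariant rather than merely $S^1$-equivariant, which is fine because the retractions built in the proof of Proposition \ref{prop3.5.1} are the canonical meridional contractions, depending only on norms, hence commuting with any linear torus action while varying continuously (indeed smoothly away from critical sets) with $\lambda$.
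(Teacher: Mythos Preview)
Your proof is correct and follows exactly the approach the paper intends: diagonalize $\G_0^{(K)}$ equivariantly via the preceding lemma, then apply Proposition~\ref{prop3.5.1} once per complex coordinate of $\C^{2nK}$, using the $\hat\F^+\simeq\F^+\times\C$ clause for the negative-eigenvalue directions and $\hat\F^-\simeq\F^-$ for the positive ones. The paper's proof is the single sentence ``Applying Proposition~\ref{prop3.5.1} multiple times'', and you have unpacked precisely that; the only slip is a harmless sign confusion in identifying which range of $k$ gives the negative eigenvalues (it is $k=0,\dots,K-1$, not $k=-K,\dots,-1$), but your cross-check via the dimension formula $\lfloor\tfrac12\rfloor=0$ gives the correct count $nK$ regardless.
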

Applying Proposition \ref{prop3.4.1} along with Proposition \ref{prop2.1.2}, we get
\begin{propsubsec}{\label{prop3.5.2}}
If $\nu$ is generic, there exists an $(\R_{>0} \times \K_0)$-equivariant fiberwise homotopy equivalence
$$
(\F^-_{N+K | \Gamma_N(\nu)}, \F^-_{N+K | \partial \Gamma_N(\nu)}) \simeq (\F^-_N(\nu) \times \C^{nK}, \partial \F^-_N(\nu) \times \C^{nK}),
$$
where we have denoted by $\F_{N+K | \Gamma_N(\nu)}^-$ and $\F_{N+K | \partial \Gamma_N(\nu)}^-$ the restrictions of $\F_{N+K}^-$ to $\C^{2n(N+K)} \times \Gamma_N(\nu)$ and $\C^{2n(N+K)} \times \partial \Gamma_N(\nu)$ respectively.
\end{propsubsec}

\subsection{The cohomology groups}{\label{sec3.6}}

In this section we study the equivariant cohomology of the sublevel sets introduced in the previous section, and come to the definition of our cohomology group by taking a limit in $N \to \infty$. In a first step we use the identification of section \ref{sec2.2} for conical spaces in order to define a natural homomorphism from which we derive our limit. The latter comes along with a natural so-called augmentation map, as well as with several algebraic structures, namely the action of coefficient rings in equivariant cohomology, as well as a Novikov action of $H_2(M;\Z)$. We interpret the latter on the kernel of the augmentation map. 

As above, we consider the homogeneous generating family
$$
\F_N : \C^{2nN} \times \Lambda_N \to \R,
$$
associated with a decomposition of the Hamiltonian symplectomorphism $\Phi_{\widetilde{H}}$, where $\widetilde{H}$ is a Hamiltonian lift of a contact Hamiltonian $h$ of $V$, and we fix a generic $\nu \in \R$. We look at the $\K_0$-equivariant cohomology groups
$$
H_{\K_0}^*(F_N^-(\nu),\partial F_N^-(\nu)).
$$
Using the notations from section \ref{sec2.2}, we work with the following short exact sequence
$$
\begin{array}{lcl}
0 \longrightarrow C_{\K_0,c}^*(\F_N^-(\nu), \partial \F_N^-(\nu)) \longrightarrow C_{\K_0}^*(\F_N^-(\nu), \partial \F_N^-(\nu)) \longrightarrow \frac{C_{\K_0}^*(\F_N^-(\nu), \partial \F_N^-(\nu))}{C_{\K_0,c}^*(\F_N^-(\nu), \partial \F_N^-(\nu))} \longrightarrow 0,
\end{array}
$$
and identify the cohomology of the third term with that of the complex $C_{\K_0}^*(F_N^-(\nu), \partial F_N^-(\nu))$, that is with $H_{\K_0}^*(F_N^-(\nu),\partial F_N^-(\nu))$. Consider the $\C^{nK}$-bundle
$$
(\F_N^-(\nu) \times \C^{nK})_{\K_0} \to \F_N^-(\nu)_{\K_0}.
$$
In relative cohomology, the Thom isomorphism of this bundle
$$
H_{\K_0}^*(\F_N^-(\nu), \partial \F_N^-(\nu)) \simeq H_{\K_0,cv}^{*+2nK}(\F_N^-(\nu) \times \C^{nK}, \partial \F_N^-(\nu) \times \C^{nK}),
$$
and the natural homomorphism
$$
H_{\K_0,cv}^{*+2nK}(\F_N^-(\nu) \times \C^{nK}, \partial \F_N^-(\nu) \times \C^{nK}) \to H_{\K_0}^{*+2nK}(\F_N^-(\nu) \times \C^{nK}, \partial \F_N^-(\nu) \times \C^{nK})
$$
preserve compact supports, and therefore they induce a homomorphism
$$
H_{\K_0}^*(F_N^-(\nu), \partial F_N^-(\nu)) \to H_{\K_0}^{*+2nK}(F_N^-(\nu) \times \C^{nK}, \partial F_N^-(\nu) \times \C^{nK}).
$$
Along with Proposition \ref{prop3.5.2}, we obtain a homomorphism
$$
H_{\K_0}^*(F_N^-(\nu), \partial F_N^-(\nu)) \to H_{\K_0}^{*+2nK}(F_{N+K | \Gamma_N(\nu)}^-, F_{N+K | \partial \Gamma_N(\nu)}^-),
$$
where we have denoted by $F_{N+K | \Gamma_N(\nu)}^-$ and $F_{N+K | \partial \Gamma_N(\nu)}^-$ the restrictions of $F_{N+K}^-$ to $S_{N+K} \times \Gamma_N(\nu)$ and $S_{N+K} \times \partial \Gamma_N(\nu)$ respectively.
We now apply the equivariant version of the excision formula to the triple
$$
(\F_{N+K}^-(\nu), \F^-_{N +K | \Gamma_{N+K}(\nu) \setminus \mathring{\Gamma}_N(\nu)}, \F^-_{N +K | \Gamma_{N+K}(\nu) \setminus \Gamma_N(\nu)}),
$$
where $\mathring{\Gamma}_N(\nu)$ denotes the interior of $\Gamma_N(\nu)$ (the reader shall note here the importance of the fact that transversality is an open condition). It induces an isomorphism
$$
H_{\K_0}^*(\F_{N+K | \Gamma_N(\nu)}^-, \F_{N+K | \partial \Gamma_N(\nu)}^-) \simeq H_{\K_0}^*(\F_{N+K}^-(\nu), \F^-_{N +K | \Gamma_{N+K}(\nu) \setminus \mathring{\Gamma}_N(\nu)}),
$$
which preserves compact supports. Moreover, there is an inclusion of pairs
$$
(\F_{N+K}^-(\nu), \partial \F_{N+K}^-(\nu)) \subset (\F_{N+K}^-(\nu), \F^-_{N +K | \Gamma_{N+K}(\nu) \setminus \mathring{\Gamma}_N(\nu)}).
$$
Putting all  these maps together, we obtain a homomorphism
$$
H_{\K_0}^*(F_N^-(\nu), \partial F_N^-(\nu)) \to H_{\K_0}^{*+2nK}(F_{N+K}^-(\nu), \partial F_{N+K}^-(\nu)).
$$
We will take a limit in $N \to \infty$, and therefore it will be convenient to shift the grading by $2nN$. Thus we have built a homomorphism
$$
f_N^{N+K} : H_{\K_0}^{*+2nN}(F_N^-(\nu), \partial F_N^-(\nu)) \to H_{\K_0}^{*+2n(N+K)}(F_{N+K}^-(\nu), \partial F_{N+K}^-(\nu)).
$$
Notice that all the maps involved in the construction of $f_N^{N+K}$ are natural in cohomology: they involve topological inclusions, the excision formula, the Thom isomorphism, and the deformation of Proposition $\ref{prop3.5.2}$. In particular, we have the following cocycle condition
$$
f_{N+K}^{N+K+K'}\circ f_N^{N+K} = f_N^{N+K+K'}.
$$

\begin{defsubsec}
We define the \textbf{cohomology of $\widetilde{H}$ of level $\nu$} as the limit
$$
\mathcal{H}_{\K_0}^*(F^-(\nu)) := \underset{N \to \infty}{\lim} H_{\K_0}^{*+2nN}(F_N^-(\nu), \partial F_N^-(\nu)).
$$
\end{defsubsec}

\begin{remarksubsec}
\normalfont This definition is independent of the choice of a sequence $\{\Lambda_N\}_N$. Indeed, for any other sequence of cubes $\{\Lambda'_N\}_N$, and for any $N$, there exists $N'$ such that $\Lambda_N \subset \Lambda'_{N'}$. Applying the Thom isomorphism and the excision formula, one can then build a homomorphism from one limit to the other, and similarly in the other direction which, by naturality, are inverse of one another.
\end{remarksubsec}
The above limit comes along with certain structures. First, note that there is an inclusion of pairs $(F_N^-(\nu), \partial F_N^-(\nu)) \subset (\F_N^-(\nu), \partial \F_N^-(\nu))$, and the latter is equivariantly homotopic to the pair $(\Gamma_N(\nu),\partial \Gamma_N(\nu))$. In particular, there is a natural homomorphism
$$
H_{\K_0}^*(\Gamma_N(\nu), \partial \Gamma_N(\nu)) \to H_{\K_0}^*(F_N^-(\nu), \partial F_N^-(\nu)).
$$
We denote by $\mathcal{H}_{\K_0}^*(\nu)$ the limit 
$$
\mathcal{H}_{\K_0}^*(\nu) := \underset{N \to \infty}{\lim} H_{\K_0}^{*+2nN}(\Gamma_N(\nu), \partial \Gamma_N(\nu)),
$$
and call the induced homomorphism 
$$
\Hcal_{\K_0}^*(\nu) \to \Hcal_{\K_0}^*(F^-(\nu))
$$
the \textbf{augmentation map}. Note that the groups $\Hcal_{\K_0}^*(\nu)$ and $\Hcal_{\K_0}^*(F^-(\nu))$ inherit from their finite parts the structure of $H_{\K_0}^*(\pt)$-modules, and that the augmentation map is a module homomorphism. We denote by $\mathcal{J}_{\K_0}^*(F^-(\nu))$ its kernel:
$$
\mathcal{J}_{\K_0}^*(F^-(\nu)) := \ker(\Hcal_{\K_0}^*(\nu) \to \Hcal_{\K_0}^*(F^-(\nu))).
$$
Recall that under the isomorphism $H^2(M,\Z) \simeq \liek_{\Z}^*$ from equation $(\ref{eq1})$, the first Chern class
$c_1$ of $(M, \omega)$ writes 
$$
c_1(m) = \underset{j=1}{\overset{n} \sum} m_j, \quad \text{for all} \quad m \in \liek_{\Z}, \quad \iota(m) = (m_1,\hdots ,m_n).
$$ 
Notice moreover that since $\nu$ is generic and the front $L_N$ of $F_N$ is $\liek_{\Z}$-invariant (see Remark \ref{rem3.2.1}), all translations $\Gamma_N(\nu) + m$ and $\partial \Gamma_N(\nu) + m$ by elements $m \in \liek_{\Z}$ are transversal to $L_N$, for any $N$. In particular, we can apply Proposition \ref{prop3.4.1} replacing $\Lambda_N$ with $\Lambda_N + m$, where $m \in \liek_{\Z}$ (see figure \ref{fig3}). This yields a homogeneous of degree 2 and $\K_0$-invariant fiberwise $C^{1,1}$ homotopy between the restricted homogeneous family $\F_{N+K | \Lambda_N + m}$ and the fiberwise direct sum $\F_N \oplus_{\Lambda_N} \G_m^{(K)}$. Applying Lemma \ref{lem3.5.1}, Corollary \ref{cor3.5.1}, and Proposition \ref{prop3.5.2}, we then obtain an $(\R_{>0} \times \K_0)$-equivariant fiberwise homotopy equivalence
$$
(\F_{N+K | \Gamma_N(\nu) + m}^-,\F_{N+K | \partial \Gamma_N(\nu) + m}^-) \simeq (\F_N^-(\nu) \times \C^{nK + 2c_1(m)}, \partial \F_N^-(\nu) \times \C^{nK + 2c_1(m)}).
$$ 
By the Thom isomorphism and the excision formula, we get a homomorphism in equivariant cohomology
$$
H_{\K_0}^{*+2nN}(F_N^-(\nu), \partial F_N^-(\nu)) \to H_{\K_0}^{*+2n(N+K)+2c_1(m)}(F_{N+K}^-(\nu + p(m)), \partial F_{N+K}^-(\nu + p(m))).
$$
In the limit $N \to \infty$, the latter becomes
$$
\Hcal_{\K_0}^*(F^-(\nu)) \to \Hcal_{\K_0}^{*+2c_1(m)}(F^-(\nu +p(m))).
$$
It is an isomorphism (with inverse given by applying Proposition \ref{prop3.4.1} replacing $0$ with $-m$), which reflects the \textit{Novikov action} of $H_2(M;\Z)$. The latter induces an isomorphism between the kernels
\begin{equation}{\label{eq9}}
\mathcal{J}_{\K_0}^*(F^-(\nu)) \simeq \mathcal{J}_{\K_0}^{*+2c_1(m)}(F^-(\nu + p(m))).
\end{equation}
Notice that the torus $\K_0$ acts trivially on the pair $(\Gamma_N(\nu), \partial \Gamma_N(\nu))$, and therefore the cohomology group $H_{\K_0}^*(\Gamma_N(\nu), \partial \Gamma_N(\nu))$ is a free $H_{\K_0}^*(\pt)$-module of rank $1$ generated by the fundamental cocycle of the sphere $\Gamma_N(\nu) / \partial \Gamma_N(\nu)$. If $K$ is big enough so that $nK + c(m) > 0$, the homomorphism
$$
H_{\K_0}^{*+2nN}(\Gamma_N(\nu), \partial \Gamma_N(\nu)) \to H_{\K_0}^{* + 2n(N+K) + 2c_1(m)}(\Gamma_{N+K}(\nu + p(m)), \partial \Gamma_{N+K}(\nu + p(m)))
$$
can thus be written 
$$
H_{\K_0}^{*+2nN}(\pt) \to H_{\K_0}^{*+2n(N+K)+2c_1(m)}(\pt).
$$
Under this identification, the only map involved in the construction of the above homomorphism is the Thom isomorphism, which reduces to the multiplication by the Euler class of the bundle
$$
(\C^{nK + c_1(m)})_{\K_0} \to B \K_0.
$$
Recall the Chern-Weil isomorphism
$$
H_{\mathbb{T}^n}^*(\pt) \simeq \C[u_1,\hdots ,u_n].
$$
The Euler class of the above bundle is then given by the image of the product $u_1^{K + m_1} \cdots u_n^{K + m_n}$ under the surjective ring homomorphism
$$
H_{\mathbb{T}^n}^*(\pt) \to H_{\K_0}^*(\pt).
$$
This will serve us in section $\ref{sec4}$, where we will be able to compute the isomorphism $(\ref{eq9})$.

\begin{figure}[H]
\begin{center}
\def\svgwidth{0.5\textwidth}
%% Creator: Inkscape inkscape 0.92.4, www.inkscape.org
%% PDF/EPS/PS + LaTeX output extension by Johan Engelen, 2010
%% Accompanies image file '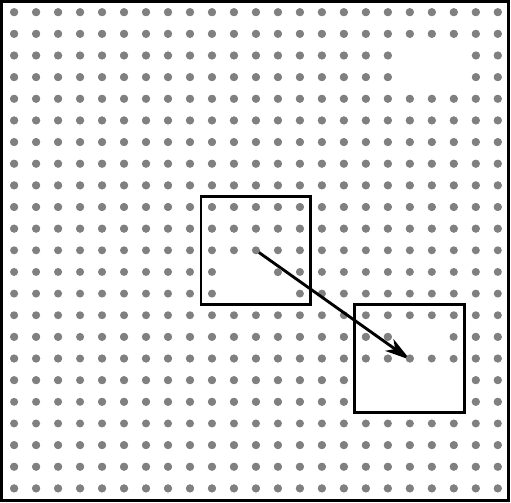' (pdf, eps, ps)
%%
%% To include the image in your LaTeX document, write
%%   \input{<filename>.pdf_tex}
%%  instead of
%%   \includegraphics{<filename>.pdf}
%% To scale the image, write
%%   \def\svgwidth{<desired width>}
%%   \input{<filename>.pdf_tex}
%%  instead of
%%   \includegraphics[width=<desired width>]{<filename>.pdf}
%%
%% Images with a different path to the parent latex file can
%% be accessed with the `import' package (which may need to be
%% installed) using
%%   \usepackage{import}
%% in the preamble, and then including the image with
%%   \import{<path to file>}{<filename>.pdf_tex}
%% Alternatively, one can specify
%%   \graphicspath{{<path to file>/}}
%% 
%% For more information, please see info/svg-inkscape on CTAN:
%%   http://tug.ctan.org/tex-archive/info/svg-inkscape
%%
\begingroup%
  \makeatletter%
  \providecommand\color[2][]{%
    \errmessage{(Inkscape) Color is used for the text in Inkscape, but the package 'color.sty' is not loaded}%
    \renewcommand\color[2][]{}%
  }%
  \providecommand\transparent[1]{%
    \errmessage{(Inkscape) Transparency is used (non-zero) for the text in Inkscape, but the package 'transparent.sty' is not loaded}%
    \renewcommand\transparent[1]{}%
  }%
  \providecommand\rotatebox[2]{#2}%
  \newcommand*\fsize{\dimexpr\f@size pt\relax}%
  \newcommand*\lineheight[1]{\fontsize{\fsize}{#1\fsize}\selectfont}%
  \ifx\svgwidth\undefined%
    \setlength{\unitlength}{244.89834554bp}%
    \ifx\svgscale\undefined%
      \relax%
    \else%
      \setlength{\unitlength}{\unitlength * \real{\svgscale}}%
    \fi%
  \else%
    \setlength{\unitlength}{\svgwidth}%
  \fi%
  \global\let\svgwidth\undefined%
  \global\let\svgscale\undefined%
  \makeatother%
  \begin{picture}(1,0.98429926)%
    \lineheight{1}%
    \setlength\tabcolsep{0pt}%
    \put(0,0){\includegraphics[width=\unitlength,page=1]{fig2_3.pdf}}%
    \put(0.78685043,0.84106448){\color[rgb]{0,0,0}\makebox(0,0)[lt]{\lineheight{1.25}\smash{\begin{tabular}[t]{l}$\Lambda_{N+K}$\end{tabular}}}}%
    \put(0,0){\includegraphics[width=\unitlength,page=2]{fig2_3.pdf}}%
    \put(0.44869055,0.4258584){\color[rgb]{0,0,0}\makebox(0,0)[lt]{\lineheight{1.25}\smash{\begin{tabular}[t]{l}$\Lambda_N$\end{tabular}}}}%
    \put(0.7179212,0.21368227){\color[rgb]{0,0,0}\makebox(0,0)[lt]{\lineheight{1.25}\smash{\begin{tabular}[t]{l}$\Lambda_N+m $\end{tabular}}}}%
    \put(0.80312501,0.31392393){\color[rgb]{0,0,0}\makebox(0,0)[lt]{\lineheight{1.25}\smash{\begin{tabular}[t]{l}$m$\end{tabular}}}}%
  \end{picture}%
\endgroup%

\caption[Moving the cube in the direction of $m \in \liek_{\Z}$]{Moving the cube in the direction of $m \in \liek_{\Z}$. Proposition \ref{prop3.4.1} can then be applied to the homogeneous generating family $\F_{N+K | \Lambda_N + m}$, which becomes homotopic to $\F_N \oplus_{\Lambda_N} \G_m^{(K)}$. It remains to observe that $\dim(\G_m^{(K)-}) = 2nK+2c_1(m)$, by Lemma \ref{lem3.5.1}.}
\label{fig3}
\end{center}
\end{figure}

\section{The Gysin sequence}{\label{sec4}}

In this section, we describe in more detail the domain $\Hcal_{\K_0}^*(\nu)$ and the kernel $\mathcal{J}_{\K_0}^*(F^-(\nu))$ of the augmentation map $\Hcal_{\K_0}^*(\nu) \to \Hcal_{\K_0}^*(F^-(\nu))$. We begin by showing, by means of a Gysin-type long exact sequence relating our construction to that of \cite{Giv95}, that $\Hcal_{\K_0}^*(\nu)$ can be identified with the ring of regular functions on the intersection $(\liek_0 \otimes \C) \cap (\C^{\times})^n$, where $(\C^{\times})^n$ is the complex torus. We then explicit $\mathcal{J}_{\K_0}^*(F^-(\nu))$ in the case where the homogeneous generating family $\F_N$ is associated with a decomposition of the Hamiltonian symplectomorphism $\Phi_0 = Id_{\C^n}$, $0$ being a Hamiltonian lift of the trivial contact Hamiltonian $0$ on $V$. We will see in section \ref{sec5} that this suffices for the proof of Proposition \ref{prop1.2.3}. We also include the proofs of Propositions \ref{prop1.2.1} and \ref{prop1.2.2} at the end of this section. Consider the aforementioned setting for the homogeneous generating family $\F_N$. Up to a reparametrization of the decomposition, the function $\F_{\lambda}^{(N)}$ is simply the generating function $\G_{\lambda}^{(N)}$ associated with the decomposition
$$\underbrace{\exp(\frac{\lambda}{2N}) \circ \cdots \circ \exp(\frac{\lambda}{2N})}_{2N-\text{times}},$$
as defined in section \ref{sec3.3.1}, so we shall use the notations $\G_N := \F_N$ and $G_N := F_N$. Note that for any $\lambda \in \Lambda_N$, $\G_{\lambda}^{(N)}$ is $\K$-invariant, so that one can consider either the $\K_0$-equivariant cohomology groups $H_{\K_0}^*(G_N^-(\nu), \partial G_N^-(\nu))$, or the $\K$-equivariant cohomology groups $H_{\K}^*(G_N^-(\nu),\partial G_N^-(\nu))$. In this last case, the functions $f_N^{N+K}$ from section \ref{sec3.6} can still be constructed in the exact same way as above, and one can define limits of $\K$-equivariant cohomology groups:
$$
\begin{array}{lcl}
\mathcal{H}_{\K}^*(G^-(\nu)) := \underset{N \to \infty}{\lim} H_{\K}^{*+2nN}(G_N^-(\nu),\partial G_N^-(\nu))\\ \\
\mathcal{H}_{\K}^*(\nu) := \underset{N \to \infty}{\lim} H_{\K}^{*+2nN}(\Gamma_N(\nu), \partial \Gamma_N(\nu)).
\end{array}
$$
The latter inherit the structure of $H_{\K}^*(\pt)$-modules, and therefore of $H_{\mathbb{T}^n}^*(\pt)$-modules as well, through the surjective homomorphism
$$
H_{\mathbb{T}^n}^*(\pt) \to H_{\K}^*(\pt).
$$
In \cite{Giv95}, Givental showed that the augmentation map
$$
\Hcal_{\K}^*(\nu) \to \Hcal_{\K}^*(G^-(\nu))
$$
has \textit{trivial cokernel}, and he described its kernel in terms of Newton diagrams associated with the level $p^{-1}(\nu)$. We will see now that there is a Gysin-type long exact sequence relating the cohomology groups $\Hcal_{\K}^*(G^-(\nu))$ of Givental \cite{Giv95} to our cohomology groups $\Hcal_{\K_0}^*(G^-(\nu))$. Gysin sequences were already used in symplectic topology to relate different kinds of Floer-type homologies, for instance in \cite{Per08}, \cite{BO13}, \cite{BK13}. We give here another example of such a use, and by further analyzing the maps involved, we will be able to describe the kernel of the augmentation map $\Hcal_{\K_0}^*(\nu) \to \Hcal_{\K_0}^*(G^-(\nu))$. Recall that given any oriented $S^1$-bundle $\pi : V \to M$, the cohomology groups of $V$ and $M$ are related by the \textit{Gysin long exact sequence}
$$
\cdots \longrightarrow H^*(M) \overset{\cup eu}{\longrightarrow} H^{*+2}(M) \overset{\pi^*}{\longrightarrow}  H^{*+2}(V) \overset{\pi_*}{\longrightarrow} H^{*+1}(M) \longrightarrow \cdots,
$$
where $eu \in H^2(M)$ is the Euler class of the bundle $\pi$, $\cup$ denotes the cup-product, $\pi^*$ is the pull-back and $\pi_*$ the push-forward. If $A \subset V$, the above sequence gives rise naturally to a long exact sequence in relative cohomology
$$
\cdots \longrightarrow H^*(M, \pi(A)) \overset{\cup eu}{\longrightarrow} H^{*+2}(M, \pi(A)) \overset{\pi^*}{\longrightarrow}  H^{*+2}(V, A) \overset{\pi_*}{\longrightarrow} H^{*+1}(M, \pi(A)) \longrightarrow \cdots,
$$
where we have used, for the sake of clarity, the same notation for the maps induced in cohomology and in relative cohomology. We will also use the notation $H_{\mathbb{T}^n}^*(\pt) \simeq \C[u]$, where $u = (u_1,\hdots ,u_n)$. Let $I$ (resp. $I_0$) denote the ideal of $\C[u]$ generated by polynomials vanishing on the complexified Lie algebra $\liek \otimes \C \subset \C^n$ (resp. $\liek_0 \otimes \C \subset \C^n$). There are natural isomorphisms
$$
H_{\K}^*(\pt) \simeq \C[u] / I \quad \text{and} \quad H_{\K_0}^*(\pt) \simeq \C[u] / I_0.
$$
In other words, $H_{\K}^*(\pt)$ (resp. $H_{\K_0}^*(\pt)$) is the ring of regular (or polynomial) functions on $\liek \otimes \C$ (resp. $\liek_0 \otimes \C$). Recall that the ring of regular functions on the complex torus $(\C^{\times})^n$ is given by $\C[u,u^{-1}]$. Let us denote by $\mathcal{R}$ and $\mathcal{R}_0$ the rings of regular functions on the intersections $(\liek \otimes \C) \cap (\C^{\times})^n$ and $(\liek_0 \otimes \C) \cap (\C^{\times})^n$ respectively. We have
\begin{center}
$\mathcal{R} = \C[u] / I \otimes \C[u,u^{-1}] \simeq \C[u,u^{-1}] / I \C[u,u^{-1}]$ \\ and \\ $\mathcal{R}_0 = \C[u] / I_0 \otimes \C[u,u^{-1}] \simeq \C[u,u^{-1}] / I_0 \C[u,u^{-1}]$.
\end{center}
Let $J^*(\nu)$ denote the $\C[u]$-submodule of $\C[u,u^{-1}]$ generated by monomials whose degrees lie in $\liek_{\Z}$ above the level $p^{-1}(\nu)$:
\begin{equation}{\label{eq10}}
J^*(\nu) := \langle u^{\iota(m)} \ |\  m \in \liek_{\Z}, \ p(m) \geq \nu \rangle,
\end{equation}
and let $\mathcal{J}_{\K}^*(\nu)$ denote its projection to $\mathcal{R}$. Givental proved the following
\begin{propsec}[\cite{Giv95} Propositions $5.3$, $5.4$, Corollary $5.5$]{\label{prop4.1}}
There are isomorphisms of $H_{\K}^*(\pt)$-modules
$$
\Hcal_{\K}^*(\nu) \simeq \mathcal{R} \quad \text{and} \quad \Hcal_{\K}^*(G^-(\nu)) \simeq \mathcal{R} / \mathcal{J}_{\K}^*(\nu).
$$
\end{propsec}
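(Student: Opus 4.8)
The statement is precisely \cite[Proposition $5.4$]{Giv1995}; the plan is to recall the argument in the notation of this paper and to observe that the kernel described there in terms of Newton diagrams is the submodule $(\ref{eq10})$. For the first isomorphism I would start from the fact that $\K$ acts trivially on the pair $(\Gamma_N(\nu),\partial\Gamma_N(\nu))$ (it is a pair of subsets of $\liek$), so that $H_{\K}^*(\Gamma_N(\nu),\partial\Gamma_N(\nu))$ is the free $H_{\K}^*(\pt)\simeq\C[u]/I$-module of rank one generated by the fundamental cocycle of the sphere $\Gamma_N(\nu)/\partial\Gamma_N(\nu)$. As recorded in Section \ref{sec3.6}, under this identification the transition map $f_N^{N+1}$ is multiplication by the Euler class of $(\C^{n})_{\K}\to B\K$, which equals $u_1\cdots u_n$. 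Hence $\Hcal_{\K}^*(\nu)=\underset{N\to\infty}{\lim}\,H_{\K}^{*+2nN}(\Gamma_N(\nu),\partial\Gamma_N(\nu))$ is the colimit of $\C[u]/I\overset{u_1\cdots u_n}{\longrightarrow}\C[u]/I\overset{u_1\cdots u_n}{\longrightarrow}\cdots$, i.e. the localization of $\C[u]/I$ at $u_1\cdots u_n$, which is $\C[u,u^{-1}]/I\C[u,u^{-1}]=\mathcal{R}$.

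For the second isomorphism I would describe $G_N^-(\nu)$ as a singular sphere fibration over $\Gamma_N(\nu)$, the fibre over $\lambda$ being the unit sphere of the negative eigenspace $\G_\lambda^{(N)-}$. By the diagonalization lemma of Section \ref{sec3.5} this eigenspace is a direct sum of the coordinate $\mathbb{T}^n$-eigenlines $\C v_j^k$, and its $j$-th block has complex dimension $N+\lfloor\lambda_j+\tfrac12\rfloor$; this dimension increases by one each time $\lambda$ crosses an affine hyperplane $\{\lambda_j\in\Z+\tfrac12\}$, and these hyperplanes cut $\Gamma_N(\nu)$ into chambers. Running the Mayer–Vietoris spectral sequence associated with this arrangement — equivalently, inducting over the chambers, on each of which $G_N^-(\nu)$ is an honest $\K$-equivariant sphere bundle with a Gysin/Thom computation available — one identifies $H_{\K}^{*+2nN}(G_N^-(\nu),\partial G_N^-(\nu))$ with the quotient of the free module $H_{\K}^{*+2nN}(\Gamma_N(\nu),\partial\Gamma_N(\nu))$ by the $\C[u]/I$-submodule generated by the Euler classes $u_1^{m_1}\cdots u_n^{m_n}$ of the complex eigenbundles over those chambers lying, with respect to $p$, above the level $\nu$. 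Passing to the limit $N\to\infty$ turns the free module into $\mathcal{R}$ as in the first part and the generators into the image of $J^*(\nu)$ from $(\ref{eq10})$; since that image is by definition $\mathcal{J}_{\K}^*(\nu)$, we obtain $\Hcal_{\K}^*(G^-(\nu))\simeq\mathcal{R}/\mathcal{J}_{\K}^*(\nu)$.

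The hard part is the bookkeeping in the second step: controlling precisely which eigenbundle Euler classes occur as $\lambda$ ranges over the chambers of $\Gamma_N(\nu)$, checking that the resulting ideal is generated exactly by the monomials $u^{\iota(m)}$ with $m\in\liek_{\Z}$ and $p(m)\geq\nu$, and verifying compatibility of the whole picture with the transition maps $f_N^{N+K}$ so that the direct limit behaves well. This is exactly the content of \cite[Section $5$]{Giv1995}, where it is phrased via Newton diagrams of the level $p^{-1}(\nu)$; all that remains is to match Givental's Newton-diagram description with the submodule $(\ref{eq10})$, which is a matter of translating notation, the Newton diagram of $p^{-1}(\nu)$ being the convex hull of the lattice points $m\in\liek_{\Z}$ with $p(m)\geq\nu$.
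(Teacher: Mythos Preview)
The paper does not give its own proof of this statement: it is simply quoted from \cite[Proposition~5.4]{Giv1995} as a black box, so there is no ``paper's proof'' to compare against. Your proposal is a reasonable sketch of Givental's argument, and the first isomorphism is handled correctly (the direct system is multiplication by $u_1\cdots u_n$ on $\C[u]/I$, hence the limit is the localization $\mathcal{R}$).

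One phrasing to tighten in the second part: you speak of ``those chambers lying, with respect to $p$, above the level $\nu$'', but every chamber of $\Gamma_N(\nu)=\Lambda_N\cap p^{-1}(\nu)$ sits \emph{at} level $\nu$. What actually happens in Givental's computation is that over each chamber the fibre $\G_\lambda^{(N)-}$ is a fixed coordinate subspace, so the relative equivariant cohomology of the sphere bundle contributes (via Thom/Gysin) the Euler class of the \emph{complementary} positive eigenspace; tracking these Euler classes across the chambers and passing to the limit is what produces the generators $u^{\iota(m)}$ with $m\in\liek_{\Z}$, $p(m)\geq\nu$. Your last paragraph essentially says this, but the earlier sentence is misleading as written. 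With that correction, your outline matches the approach in \cite[Section~5]{Giv1995}.
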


In particular, we have $\mathcal{J}_{\K}^*(G^-(\nu)) \simeq \mathcal{J}_{\K}^*(\nu)$. We denote by $\mathcal{J}_{\K_0}^*(\nu)$ the projection of $J^*(\nu)$ to $\mathcal{R}_0$. We claim that the kernel $\mathcal{J}_{\K_0}^*(G^-(\nu))$ of the augmentation map
$$
\Hcal_{\K_0}^*(\nu) \to \Hcal_{\K_0}^*(G^-(\nu))
$$
can be identified with $\mathcal{J}_{\K_0}^*(\nu)$. Note that if $EG \to BG$ denotes the universal bundle associated with a Lie group $G$, then $E \K_0$ may be thought of as $E \K$, since the latter is contractible, and $\K_0$ acts freely on it. Therefore, one can define the homotopy quotient $(G_N^-(\nu))_{\K_0}$ by
$$
(G_N^-(\nu))_{\K_0} := (G_N^-(\nu) \times E \K) / \K_0,
$$
and construct the following principal $S^1 := \K / \K_0$-bundle:
$$
\pi_N : (G_N^-(\nu))_{\K_0} \to (G_N^-(\nu))_{\K}.
$$
In relative cohomology, one can then relate the $\K$-equivariant and $\K_0$-equivariant cohomology groups above by the Gysin sequence
$$
\begin{array}{lcl}
\cdots \longrightarrow H_{\K}^{*+2nN}(G_N^-(\nu), \partial G_N^-(\nu)) \overset{\cup eu}{\longrightarrow} H_{\K}^{*+ 2nN +2}(G_N^-(\nu), \partial G_N^-(\nu)) \overset{\pi_N^*}{\longrightarrow} \\ \\
\quad \quad \quad  H_{\K_0}^{*+ 2nN +2}(G_N^-(\nu), \partial G_N^-(\nu)) \overset{\pi_{N*}}{\longrightarrow} H_{\K}^{*+ 2nN +1}(G_N^-(\nu), \partial G_N^-(\nu)) \longrightarrow \cdots.
\end{array}
$$
Let us interpret the Euler class $eu$ in this sequence. The classifying map
$$
l_N : (G_N^-(\nu))_{\K} \to B(\K / \K_0)
$$
of the bundle $\pi_N$ gives rise to a homomorphism
$$
l_N^* : H^*(B(\K / \K_0)) \to H_{\K}^*(G_N^-(\nu)).
$$
It restricts on $(\partial G_N^-(\nu))_{\K}$ to the classifying map of the principal $\K / \K_0$-bundle $(\partial G_N^-(\nu))_{\K_0} \to (\partial G_N^-(\nu))_{\K}$, which yields a homomorphism (still denoted by $l_N^*$)
$$
l_N^* : H^*(B(\K / \K_0)) \to H_{\K}^*(\partial G_N^-(\nu)).
$$
Moreover, by the Chern-Weil isomorphism, the group $H^*(B(\K/\K_0))$ can be identified with the polynomial algebra $\C[(\liek / \liek_0)^*] \simeq \C[p]$ (here $p$ is assigned the degree $2$, and is viewed as a $\C$-valued functional on $\liek \otimes \C$). The long exact sequence of relative $\K$-equivariant cohomology writes
$$
\begin{array}{lcl}
\cdots \longrightarrow H_{\K}^{*-1}(G_N^-(\nu)) \longrightarrow H_{\K}^{*-1}(\partial G_N^-(\nu)) \longrightarrow H_{\K}^*(G_N^-(\nu), \partial G_N^-(\nu)) \longrightarrow \cdots.
\end{array}
$$
Along with the homomorphisms above, we obtain a diagram
$$
\begin{tikzcd}
\cdots \ar[r] & H_{\K}^{*-1}(G_N^-(\nu)) \ar[r] & H_{\K}^{*-1}(\partial G_N^-(\nu)) \ar[r] & H_{\K}^*(G_N^-(\nu), \partial G_N^-(\nu)) \ar[r] & \cdots\\
\cdots \ar[r] & \C[p] \ar[r,"\underset{=}{Id}"] \ar[u,"l_N^*"] & \C[p] \ar[r,"\underset{=}{Id}"] \ar[u,"l_N^*"] & \C[p] \ar[r] & \cdots.
\end{tikzcd}
$$
Therefore, we obtain a map in relative equivariant cohomology 
$$
l_N^* : \C[p] \to H_{\K}^*(G_N^-(\nu), \partial G_N^-(\nu)).
$$
Now, recall that any characteristic class of a principal bundle is given by the pull-back by its classifying map of a universal characteristic class of the associated universal principal bundle. In our case, we have the universal principal $S^1$-bundle $E(\K / \K_0) \to B(\K / \K_0)$, and by definition, the universal characteristic classes are the cohomology classes of $H^*(B (\K / \K_0))$, which is isomorphic to $\C[p]$. Moreover, since it is an $S^1$-bundle, the universal Euler class agrees with the first Chern class of the associated complex line bundle, which is, by definition, the generator of $H^2(B (\K / \K_0))$, that is $p$. This implies that the Euler class in the above Gysin sequence is given by the pull-back $l_N^*(p)$ of $p$ by the map $l_N$. The other two maps involved are the pull-back $\pi_N^*$ and the push-forward $\pi_{N*}$ (in relative cohomology). The latter are canonical, and therefore \textit{they commute with the limit $N \to \infty$}. Moreover, the cup product by the Euler class is \textit{functorial}. Since directs limits of exact sequences are exact sequences, we obtain a Gysin sequence in the limit $N \to \infty$:
$$
\cdots \longrightarrow \Hcal_{\K}^*(G^-(\nu)) \longrightarrow \Hcal_{\K}^{*+2}(G^-(\nu)) \longrightarrow \Hcal_{\K_0}^{*+2}(G^-(\nu)) \longrightarrow \Hcal_{\K}^{*+1}(G^-(\nu)) \longrightarrow \cdots.
$$
The same applies for the equivariant cohomology groups of the pair $(\Gamma_N(\nu), \partial \Gamma_N(\nu))$, for which the Gysin sequence in the limit is given by 
\begin{equation}{\label{eq11}}
\cdots \longrightarrow \Hcal_{\K}^*(\nu) \longrightarrow \Hcal_{\K}^{*+2}(\nu) \longrightarrow \Hcal_{\K_0}^{*+2}(\nu) \longrightarrow \Hcal_{\K}^{*+1}(\nu) \longrightarrow \cdots.
\end{equation}
\begin{propsec}{\label{prop4.2}}
There is a natural isomorphism of $H_{\K_0}^*(\pt)$-modules
$$
\Hcal_{\K_0}^*(\nu) \simeq \mathcal{R}_0.
$$
\begin{proof}
By Proposition $\ref{prop4.1}$, $\Hcal_{\K}^*(\nu)$ is the ring $\mathcal{R}$ of regular functions on the intersection $(\liek \otimes \C) \cap (\C^{\times})^n$. The latter is an irreducible variety, and thus there are no zero divisors in $\Hcal_{\K}^*(\nu)$. In particular, the map induced in the limit by the cup product with the Euler class is injective. This reduces the Gysin sequence $(\ref{eq11})$ to the following short exact sequence
$$
0 \longrightarrow \Hcal_{\K}^*(\nu) \longrightarrow \Hcal_{\K}^{*+2}(\nu) \longrightarrow \Hcal_{\K_0}^{*+2}(\nu) \longrightarrow 0.
$$
Now, for any $N$, recall that the torus $\K$ acts trivially on the pair $(\Gamma_N(\nu), \partial \Gamma_N(\nu))$. This makes $H_{\K}^*(\Gamma_N(\nu), \partial \Gamma_N(\nu))$ into a free $H_{\K}^*(\pt)$-module of rank $1$, generated by the fundamental cocycle of the sphere $\Gamma_N(\nu) / \partial \Gamma_N(\nu)$ (see section \ref{sec3.6}). The same applies for the action of $\K_0$, and therefore the principal $S^1$-bundle to be considered here reduces to
$$
B \K_0 \to B \K.
$$
Through the Chern-Weil isomorphism, in cohomology, this map becomes $\C[\liek^*] \to \C[\liek_0^*]$. For the map $l^* : \C[p] \to \C[\liek^*]$ induced in cohomology by the classifying map in this case, the Euler class $l^*(p)$ is simply the generator of the kernel of $\liek^* \to \liek_0^*$, that is $p$. Thus in $\Hcal_{\K}^*(\nu)$, the map induced by the cup product with the Euler class is simply the multiplication by $p$. By the very definition of $I_0$ and $I$, one gets
$$
\Hcal_{\K_0}^*(\nu) \simeq \mathcal{R} / p \mathcal{R} \simeq \mathcal{R}_0.
$$ 
\end{proof}
\end{propsec}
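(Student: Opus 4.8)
The plan is to run the limiting Gysin sequence $(\ref{eq11})$ against the computation of Proposition \ref{prop4.1}. First I would note that $\mathcal{R}$, being the ring of regular functions on $(\liek\otimes\C)\cap(\C^{\times})^n$ -- a Zariski-open subset of the linear subspace $\liek\otimes\C\subset\C^n$, hence irreducible -- is an integral domain, and it is nonzero because $\liek$ lies in no coordinate hyperplane (recall that each $u_i$ maps to a positive-degree, in particular nonzero, element of $H_{\K_0}^*(\pt)$, a fortiori of $H_{\K}^*(\pt)$). Using Proposition \ref{prop4.1} to replace $\Hcal_{\K}^*(\nu)$ by $\mathcal{R}$, the map in $(\ref{eq11})$ given by cup product with the Euler class becomes multiplication by some element of $\mathcal{R}$; once this element is seen to be nonzero it is automatically injective, and injectivity collapses $(\ref{eq11})$ into the short exact sequences
$$0 \longrightarrow \Hcal_{\K}^*(\nu) \overset{\cup\, eu}{\longrightarrow} \Hcal_{\K}^{*+2}(\nu) \longrightarrow \Hcal_{\K_0}^{*+2}(\nu) \longrightarrow 0,$$
so that $\Hcal_{\K_0}^*(\nu)\simeq\mathcal{R}/(eu)$.

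The key step -- and the one I expect to be the main obstacle -- is identifying the Euler class. For each $N$ the torus $\K$ (hence also $\K_0$) acts trivially on the pair $(\Gamma_N(\nu),\partial\Gamma_N(\nu))$, which is a sphere pair, so $H_{\K}^*(\Gamma_N(\nu),\partial\Gamma_N(\nu))$ is a free $H_{\K}^*(\pt)$-module of rank one on the fundamental cocycle, and similarly over $H_{\K_0}^*(\pt)$. Consequently the principal $S^1=\K/\K_0$-bundle producing $(\ref{eq11})$ reduces, in the limit, to $B\K_0\to B\K$; in cohomology this is the surjection $\C[\liek^*]\to\C[\liek_0^*]$, whose Euler class is the generator of the kernel $(\liek/\liek_0)^*\hookrightarrow\liek^*$. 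Since $\liek_0=\ker p$, that generator is $p$ up to a nonzero scalar, and it is nonzero in $\mathcal{R}$ because $p$ does not vanish identically on $(\liek\otimes\C)\cap(\C^{\times})^n$. Thus, under $\Hcal_{\K}^*(\nu)\simeq\mathcal{R}=\C[u,u^{-1}]/I\C[u,u^{-1}]$, cup product with the Euler class is multiplication by the image of $p$, and $\Hcal_{\K_0}^*(\nu)\simeq\mathcal{R}/p\mathcal{R}$.

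It then remains to recognise $\mathcal{R}/p\mathcal{R}$ as $\mathcal{R}_0$, which is elementary torus geometry: the hyperplane $\{p=0\}$ in $\liek\otimes\C$ is exactly $\liek_0\otimes\C$, so its trace on $(\liek\otimes\C)\cap(\C^{\times})^n$ is the irreducible open set $(\liek_0\otimes\C)\cap(\C^{\times})^n$ with coordinate ring $\mathcal{R}_0$; since $p$ is a linear form with everywhere-nonvanishing differential, $(p)$ is prime in $\mathcal{R}$ and the quotient is precisely $\mathcal{R}_0$. Finally, naturality in $\nu$ and compatibility with the $H_{\K_0}^*(\pt)$-module structures are inherited from the naturality of the Gysin sequence and of Proposition \ref{prop4.1}. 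Apart from the Euler-class identification, every step is either the formal splitting of a Gysin sequence of domains or standard commutative algebra of tori and their linear sections.
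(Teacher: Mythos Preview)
Your proposal is correct and follows essentially the same route as the paper: both use the limiting Gysin sequence $(\ref{eq11})$, invoke Proposition~\ref{prop4.1} to identify $\Hcal_{\K}^*(\nu)\simeq\mathcal{R}$ as an integral domain so that cup product with the Euler class is injective, identify that Euler class as $p$ via the reduction to $B\K_0\to B\K$, and conclude $\Hcal_{\K_0}^*(\nu)\simeq\mathcal{R}/p\mathcal{R}\simeq\mathcal{R}_0$. You supply a few extra justifications the paper leaves implicit (nonvanishing of $p$ in $\mathcal{R}$, primality of $(p)$), but the argument is the same.
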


\begin{remarksec}
\normalfont Notice that if $M = \C P^{n-1}$, then $\liek = \R$, and $\liek_0 = \{0\}$. Therefore $I_0 = \C[u,u^{-1}]$, and $\mathcal{R}_0 = \{0\}$. Moreover, the Euler class $p$ in this case is the generator $v$ of $\mathcal{R} \simeq \C[v,v^{-1}]$, and thus it is invertible. Therefore the Gysin sequence in the limit $N \to \infty$ gives $\Hcal_{\K_0}^*(\nu) = \{0\}$.
\end{remarksec}

By the above discussion, we have the following commutative diagram
$$
\begin{tikzcd}
   &          &          &        \vdots    \ar[d]                    & \\ 
   &  0 \ar[d] & 0 \ar[d] & \Hcal_{\K_0}^{*+1}(G^-(\nu)) \ar[d] & \\ 
  0 \ar[r] & \mathcal{J}_{\K}^*(\nu) \ar[r] \ar[d,"p"] & \Hcal_{\K}^*(\nu) \ar[r] \ar[d,"p"] & \Hcal_{\K}^*(G^-(\nu)) \ar[r] \ar[d] & 0 \\
0 \ar[r] & \mathcal{J}_{\K}^{*+2}(\nu) \ar[r] \ar[d,"f"] & \Hcal_{\K}^{*+2}(\nu) \ar[r] \ar[d,"g"] & \Hcal_{\K}^{*+2}(G^-(\nu)) \ar[r] \ar[d,"h"] & 0 \\
0 \ar[r] & \mathcal{J}_{\K_0}^{*+2}(G^-(\nu)) \ar[r] \ar[d, dotted, red] & \Hcal_{\K_0}^{*+2}(\nu) \ar[r] \ar[d] & \Hcal_{\K_0}^{*+2}(G^-(\nu)) \ar[d] \\
&  0  & 0  & \Hcal_{\K}^{*+1}(G^-(\nu)) \ar[d]. \\
& & & \vdots
\end{tikzcd}
$$
The vertical right and middle sequences are Gysin sequences, and the horizontal sequences are induced by the augmentation maps. Note that the left vertical sequence is a priori not necessarily exact at the middle term. The diagram is clear from the discussion above, except for the surjectivity of the map $f$. From the two bottom rows of the diagram and the snake lemma, we have the following short exact sequence:
$$
0 \longrightarrow \ker f \longrightarrow \ker g \longrightarrow \ker h \longrightarrow \text{coker} f \longrightarrow 0.
$$
The homomorphism $\ker g \to \ker h$ is simply the surjection $\cup l^*(p) (\Hcal_{\K}^*(\nu)) \to \cup l^*(p) (\Hcal_{\K}^*(G^-(\nu)))$, where $\cup l^*(p)$ denotes the map induced in the limit $N \to \infty$ by the cup product with the Euler class (on $\Hcal_{\K_0}^*(\nu)$, as we've seen above, it is the multiplication by $p$). We conclude that $f$ is onto. In particular, the kernel $\mathcal{J}_{\K_0}^*(G^-(\nu))$ is the image of $\mathcal{J}_{\K}^*(\nu) \subset \Hcal_{\K}^*(\nu)$ under the map $g$. In other words, it is the projection to $\mathcal{R}_0$ of the module $J^*(\nu)$:
$$
\mathcal{J}_{\K_0}^*(G^-(\nu)) \simeq \mathcal{J}_{\K_0}^*(\nu).
$$

Let us go back to the general case, where the homogeneous generating family $\F_N$ is associated with a decomposition of a Hamiltonian lift of a contact Hamiltonian $h : V \times [0,1] \to \R$. In this setting, we cannot compute the kernel $\mathcal{J}_{\K_0}^*(F^-(\nu))$ of the augmentation map
$$
\Hcal_{\K_0}^*(\nu) \to \Hcal_{\K_0}^*(F^-(\nu)).
$$
However, we can now compute the isomorphism $(\ref{eq9})$:
$$
\mathcal{J}_{\K_0}^*(F^-(\nu)) \simeq \mathcal{J}_{\K_0}^{*+2c_1(m)}(F^-(\nu + p(m))).
$$
Recall from section $\ref{sec3.6}$ that the homomorphism
$$
H_{\K_0}^{*+2nN}(\Gamma_N(\nu), \partial \Gamma_N(\nu)) \to H_{\K_0}^{*+2n(N+K) + 2c_1(m)}(\Gamma_{N+K}(\nu + p(m)), \partial \Gamma_{N+K}(\nu + p(m)))
$$ 
is given by the multiplication by the image of the product $u_1^{K+m_1} \cdots u_n^{K+m_n}$ through the natural projection
$$
\C[u] \to \C[u] / I_0.
$$
In the limit $N \to \infty$, this implies that the isomorphism $(\ref{eq9})$ is given by the multiplication by $u_1^{m_1} \cdots u_n^{m_n}$:
$$
\mathcal{J}_{\K_0}^{*+2c_1(m)}(F^-(\nu + p(m))) \simeq u_1^{m_1} \cdots u_n^{m_n} \mathcal{J}_{\K_0}^*(F^-(\nu)).
$$

To close this section, we now prove Propositions \ref{prop1.2.1} and \ref{prop1.2.2}. They are included here only for the sake of completeness, for they are similar to the proofs of \cite[Propositions $6.2$, $6.3$]{Giv95}. Notice that since the generating functions $\F_{\lambda}^{(N)}$ decrease in positive directions (see section \ref{sec3.3.2} property $4$), for any generic $\nu_0 \leq \nu_1$, if $N$ is big enough so that $\Gamma_N(\nu_0)$ and $\Gamma_N(\nu_1)$ are not empy, there is an injection of pairs $(F_N^-(\nu_0), \partial F_N^-(\nu_0)) \hookrightarrow (F_N^-(\nu_1), \partial F_N^-(\nu_1))$, which induces a natural homomorphism
$$
\Hcal_{\K_0}^*(F^-(\nu_1)) \to \Hcal_{\K_0}^*(F^-(\nu_0)).
$$
\noindent \textbf{Proposition \ref{prop1.2.1}.} \textit{Suppose that $[\nu_0,\nu_1] \cap \Spec(g \circ \phi_h) = \emptyset$. Then the homomorphism above is an isomorphism}
$$
\Hcal_{\K_0}^*(F^-(\nu_1)) \simeq \Hcal_{\K_0}^*(F^-(\nu_0)).
$$
\begin{proof}
Assume that $[\nu_0, \nu_1] \cap \Spec(g \circ \phi_h) = \emptyset$. For any $\nu \in [\nu_0,\nu_1]$ and any $N$, $\Gamma_N(\nu)$ is transversal to the front $L_N$. Moreover, the set of generic $\nu$ is of full measure. Let $\nu \in [\nu_0,\nu_1]$ be in this set. Since transversality is an open condition, by Proposition \ref{prop2.1.3}, there exists $\epsilon > 0$ and a $\K_0$-equivariant homotopy equivalence
$$
(F_N^-(\nu + \epsilon), \partial F_N^-(\nu + \epsilon)) \simeq (F_N^-(\nu - \epsilon), \partial F_N^-(\nu - \epsilon)).
$$
In the limit $N \to \infty$ of $\K_0$-equivariant cohomology groups, this yields an isomorphism
$$
\Hcal_{\K_0}^*(F^-(\nu + \epsilon)) \simeq \Hcal_{\K_0}^*(F^-(\nu - \epsilon)).
$$
Choosing a finite subcovering of $[\nu_0,\nu_1]$ by such segments $[\nu - \epsilon, \nu + \epsilon]$ yields an isomorphism 
$$\Hcal_{\K_0}^*(F^-(\nu_1)) \simeq \Hcal_{\K_0}^*(F^-(\nu_0)),$$ 
as claimed.
\end{proof}

\begin{remarksec} 
\normalfont Note that the inclusion $(F_N^-(\nu_0), \partial F_N^-(\nu_0)) \hookrightarrow (F_N^-(\nu_1), \partial F_N^-(\nu_1))$ is not necessarily a homotopy equivalence, even $N$ is big: even if $\nu_0$ and $\nu_1$ are chosen generically (which is not the case), nothing forces the front $L_N$ to stay transversal to $\partial \Gamma_N(\nu)$ for all $\nu \in (\nu_0, \nu_1)$. Therefore, the isomorphism can only be obtained in the limit $N \to \infty$, which is why we perform a limit process.
\end{remarksec}

\noindent \textbf{Proposition \ref{prop1.2.2}.} \textit{
Suppose that the segment $[\nu_0,\nu_1]$ contains only one value $\nu \in \Spec(g \circ \phi_h)$, which corresponds to a finite number of translated points. Let $v \in H_{\K_0}^*(\pt)$ be an element of positive degree, and $q \in \mathcal{R}_0$. Suppose that $q \in \mathcal{J}_{\K_0}^*(F^-(\nu_0))$. Then $v q \in \mathcal{J}_{\K_0}^*(F^-(\nu_1))$.}
\begin{proof}
We rephrase the statement as follows: suppose that $q_1 \in \Hcal_{\K_0}^*(F^-(\nu_1))$ and $q_0 \in \Hcal_{\K_0}^*(F^-(\nu_0))$ are the images of $q$ under the augmentation maps
$$
\begin{tikzcd}
q \in \mathcal{R}_0 \ar[r] \ar[dr] & \Hcal_{\K_0}^*(F^-(\nu_1)) \ni q_1 \ar[d] \\
& \Hcal_{\K_0}^*(F^-(\nu_0)) \ni q_0.
\end{tikzcd}
$$
Then $q_0 = 0$ implies $v q_1 = 0$. 

Without loss of generality, we can assume that $\nu_0 = \nu - \epsilon$, $\nu_1 = \nu + \epsilon$ (using the argument of Proposition \ref{prop1.2.1}), and that there is only one $\K_0$-orbit of fixed points associated with $\nu$. Using the deformations from Proposition \ref{prop2.1.3}, for $N$ big enough, the pair $(F_N^-(\nu_0), \partial F_N^-(\nu_0))$ is embedded into $(F_N^-(\nu_1), \partial F_N^-(\nu_1))$ as the complement of a neighborhood to the pair $(F_N^-(\nu), \partial F_N^-(\nu))$ which contains a $\K_0$-orbit of critical points of $\hat{p}_N$ (since avoiding critical $\K_0$-orbits of $\hat{p}_N$ is equivalent to staying transversal to the front $L_N$). There exists a non-zero representative $\hat{q} \in H_{\K_0}^*(F_N^-(\nu_1),F_N^-(\nu_1))$ of $q_1$ (otherwise $q_1 = 0$ and the statement is trivial) which vanishes when restricted to $(F_N^-(\nu_0), \partial F_N^-(\nu_0))$. In particular, $\hat{q}$ is the image of some element $\alpha \in H_{\K_0}^*(F_N^-(\nu_1), F_N^-(\nu_0))$ under the long exact sequence
$$
\begin{array}{lcl}
\cdots \longrightarrow H_{\K_0}^*(F_N^-(\nu_1), F_N^-(\nu_0)) \overset{f}{\longrightarrow} H_{\K_0}^*(F_N^-(\nu_1), \partial F_N^-(\nu_1)) \longrightarrow H_{\K_0}^*(F_N^-(\nu_0), \partial F_N^-(\nu_0)) \longrightarrow \\ \\
\quad \quad \quad  H_{\K_0}^{*+1}(F_N^-(\nu_1), F_N^-(\nu_0)) \longrightarrow \cdots.
\end{array}
$$
\textit{The torus $\K_0$ acts freely in a neighborhood of the $\K_0$-orbit of critical points of $\hat{p}_N$} (since $M$ is compact), hence the equivariant cohomology $H_{\K_0}^*(F_N^-(\nu_1),F_N^-(\nu_0))$ is simply the singular cohomology $H^*(F_N^-(\nu_1) / \K_0, F_N^-(\nu_0) / \K_0)$. Moreover, the action of the coefficient ring $H_{\K_0}^*(\pt)$ on the latter is trivial, since the principal bundle associated with the free $\K_0$-action on the neighborhood of the critical $\K_0$-orbit of $\hat{p}_N$ can be trivialized. Thus,
$$
v \alpha = 0 \quad \text{and} \quad v \hat{q} = f(v \alpha) = 0.
$$
\end{proof} 

\section{Elements of minimal degree in the monotone case}{\label{sec5}}

The proof of Theorem \ref{theo1.1.1} relies on a strong algebraic property of the kernel $\mathcal{J}_{\K_0}^*(F^-(\nu))$. In the $\K$-equivariant case, Givental showed that the kernel $\mathcal{J}_{\K}^*(F^-(\nu))$ of the augmentation map \looseness=-1
$$
\mathcal{R} \simeq \Hcal_{\K}^*(\nu) \to \Hcal_{\K}^*(F^-(\nu))
$$
admits, in some sense, elements of minimal degree ({\cite[Corollary $1.3$]{Giv95}}). It appears that this is not always true for the $\K_0$-equivariant kernel $\mathcal{J}_{\K_0}^*(F^-(\nu))$ when the manifold is a prequantization space over a toric manifold which is not necessarily monotone. For instance, consider, for any $a, b \in \Z \setminus \{0\}$, the symplectic toric manifold $(\C P^1 \times \C P^1, \omega_{a,b}, \mathbb{T}^2 / S^1 \times \mathbb{T}^2 / S^1)$, where $\omega_{a,b} := a \omega_{\text{FS}} \oplus b \omega_{\text{FS}}$, $\omega_{\text{FS}}$ being the Fubini-Study form. Recall that it is obtained by symplectic reduction of $\C^2 \times \C^2$ by the diagonal $S^1$-action on each factor. The associated momentum map is given by 
$$
P_{\mathbb{T}^2} : \C^2 \times \C^2 \to \R^{2*}, \quad (z_1,z_2,w_1,w_2) \mapsto \pi(|z_1|^2 + |z_2|^2, |w_1|^2 + |w_2|^2),
$$
and therefore, $\liek \simeq \R^2$ embeds into $\R^4$ via
$$
\iota : \R^2 \hookrightarrow \R^4, \quad m = (m_1,m_2) \mapsto (m_1,m_1,m_2,m_2).
$$
Under the identification $H^2(M;\Z) \simeq \liek_{\Z}^* \simeq \Z^{2*}$, the cohomology class $[\omega_{a,b}]$ of $\omega_{a,b}$ is simply given by the regular value $p = (a,b) \in \Z^{2*}$ of the momentum map $P_{\mathbb{T}^2}$, and for any $m = (m_1, m_2) \in H_2(M; \Z) \simeq \Z^2$, we have
$$
[\omega_{a,b}](m) = a m_1 + b m_2 \quad \text{and} \quad c_1(m) = 2m_1 + 2m_2.
$$
In particular, $(\C P^1 \times \C P^1,  \omega_{a,b})$ is monotone if and only if $a = b$. Let us compute the kernel $\mathcal{J}_{\K_0}^*(G^-(\nu)) \simeq \mathcal{J}_{\K_0}^*(\nu)$ for the generating family $G_N$ associated with the decomposition
$$
\underbrace{\exp(\frac{\lambda}{2N}) \circ \cdots \circ \exp(\frac{\lambda}{2N})}_{2N-\text{times}}.
$$
The $\C[u_1,u_2,u_3,u_4]$-module 
$$
J^*(\nu) = \langle u^{\iota(m)} \ | \  m \in \liek_{\Z}, \ p(m) \geq \nu \rangle
$$ 
from equation $(\ref{eq10})$ can thus be written as
$$
J^*(\nu) = \langle u_1^{m_1}u_2^{m_1}u_3^{m_2}u_4^{m_2} \  | \  m \in \liek_{\Z}, \  am_1 + bm_2 \geq \nu \rangle.
$$
The ideal $I_0$ provides identifications $u_1 = u_2 = v_1, u_3 = u_4 = v_2$, and imposes that $av_1 +bv_2 = 0$. Hence, putting $v_2 = v$, the quotient $\mathcal{J}_{\K_0}^*(\nu) \subset \Hcal_{\K_0}^*(\nu)$ is given by
$$
\mathcal{J}_{\K_0}^*(\nu) = \langle -\frac{b}{a} v^{2(m_1+m_2)}  \  | \  am_1+bm_2 \geq \nu \rangle.
$$
We see here that the submodule $\mathcal{J}_{\K_0}^*(\nu)$ is different from $\Hcal_{\K_0}^*(\nu) \simeq \C[v,v^{-1}]$ if and only if $a=b$, that is if we are in the monotone case. Indeed, suppose that $a = b = k$ with, say $k > 0$ (the same argument applies if $k < 0$). Then $k(m_1 + m_2) \geq \nu$ implies that $2(m_1 + m_2) \geq \frac{2}{k}\nu$. Therefore, the monomials of the $\C[v]$-module $\mathcal{J}_{\K_0}^*(\nu)$ are all of the form $v^{l + l'}$, with $l \geq \frac{2}{k}\nu$ and $l' \geq 0$, which in particular implies that $\mathcal{J}_{\K_0}^*(\nu)$ is different from $\Hcal_{\K_0}^*(\nu) \simeq \C[v,v^{-1}]$. On the other hand, if $a \neq b$, then the expression $m_1 + m_2$ can take all values in $\Z$, even if $a m_1 + b m_2 \geq \nu$. Therefore, the monomials of the $\C[v]$-module $\mathcal{J}_{\K_0}^*(\nu)$ are all of the form $v^{2l + l'}$, where $l \in \Z$, and $l' \geq 0$, that is they are of the form $v^l$, $l \in \Z$. Hence, $\mathcal{J}_{\K_0}^*(\nu) = \Hcal_{\K_0}^*(\nu)$.

\begin{remarksec}
\normalfont Note that if we project $\mathcal{J}^*(\nu)$ to $\Hcal_{\K}^*(\nu) \simeq \C[u_1,u_2,u_3,u_4] / I$, as it is done in \cite{Giv95}, we have the identifications $u_1 = u_2 = v_1, u_3 = u_4 = v_2$, hence
$$
\mathcal{J}_{\K}^*(\nu) = \langle v_1^{2m_1}v_2^{2m_2} \  | \  am_1 + bm_2 \geq \nu \rangle.
$$
This module is different from the whole ring $\Hcal_{\K}^*(\nu) \simeq \C[v_1,v_2,v_1^{-1},v_2^{-1}]$, and always admits elements of minimal degree, in the sense of Proposition \ref{prop1.2.3}: there exists $q \in \mathcal{R}$ such that $q \notin \mathcal{J}_{\K}^*(\nu)$, but $u_i q \in \mathcal{J}_{\K}^*(\nu)$, for all $i=1,...,n$. (see figure \ref{fig4}).
\end{remarksec}

\begin{figure}[H]
\begin{center}
\def\svgwidth{0.6\textwidth}
%% Creator: Inkscape inkscape 0.92.4, www.inkscape.org
%% PDF/EPS/PS + LaTeX output extension by Johan Engelen, 2010
%% Accompanies image file '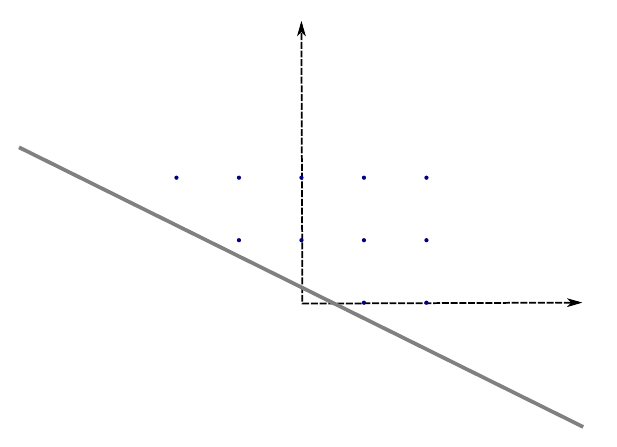' (pdf, eps, ps)
%%
%% To include the image in your LaTeX document, write
%%   \input{<filename>.pdf_tex}
%%  instead of
%%   \includegraphics{<filename>.pdf}
%% To scale the image, write
%%   \def\svgwidth{<desired width>}
%%   \input{<filename>.pdf_tex}
%%  instead of
%%   \includegraphics[width=<desired width>]{<filename>.pdf}
%%
%% Images with a different path to the parent latex file can
%% be accessed with the `import' package (which may need to be
%% installed) using
%%   \usepackage{import}
%% in the preamble, and then including the image with
%%   \import{<path to file>}{<filename>.pdf_tex}
%% Alternatively, one can specify
%%   \graphicspath{{<path to file>/}}
%% 
%% For more information, please see info/svg-inkscape on CTAN:
%%   http://tug.ctan.org/tex-archive/info/svg-inkscape
%%
\begingroup%
  \makeatletter%
  \providecommand\color[2][]{%
    \errmessage{(Inkscape) Color is used for the text in Inkscape, but the package 'color.sty' is not loaded}%
    \renewcommand\color[2][]{}%
  }%
  \providecommand\transparent[1]{%
    \errmessage{(Inkscape) Transparency is used (non-zero) for the text in Inkscape, but the package 'transparent.sty' is not loaded}%
    \renewcommand\transparent[1]{}%
  }%
  \providecommand\rotatebox[2]{#2}%
  \newcommand*\fsize{\dimexpr\f@size pt\relax}%
  \newcommand*\lineheight[1]{\fontsize{\fsize}{#1\fsize}\selectfont}%
  \ifx\svgwidth\undefined%
    \setlength{\unitlength}{295.78045702bp}%
    \ifx\svgscale\undefined%
      \relax%
    \else%
      \setlength{\unitlength}{\unitlength * \real{\svgscale}}%
    \fi%
  \else%
    \setlength{\unitlength}{\svgwidth}%
  \fi%
  \global\let\svgwidth\undefined%
  \global\let\svgscale\undefined%
  \makeatother%
  \begin{picture}(1,0.72457639)%
    \lineheight{1}%
    \setlength\tabcolsep{0pt}%
    \put(0,0){\includegraphics[width=\unitlength,page=1]{fig4_1.pdf}}%
    \put(0.96433276,0.01913902){\color[rgb]{0.50196078,0.50196078,0.50196078}\makebox(0,0)[lt]{\lineheight{1.25}\smash{\begin{tabular}[t]{l}$p^{-1}(1)$\end{tabular}}}}%
    \put(0.73532853,0.52206923){\color[rgb]{0,0,0}\makebox(0,0)[lt]{\lineheight{1.25}\smash{\begin{tabular}[t]{l}$\mathfrak{k}$\end{tabular}}}}%
    \put(0.46610182,0.19052017){\color[rgb]{0,0,0}\makebox(0,0)[lt]{\lineheight{1.25}\smash{\begin{tabular}[t]{l}$0$\end{tabular}}}}%
    \put(0,0){\includegraphics[width=\unitlength,page=2]{fig4_1.pdf}}%
    \put(0.26516944,0.40544084){\color[rgb]{0,0,0}\makebox(0,0)[lt]{\lineheight{1.25}\smash{\begin{tabular}[t]{l}$u_2$\end{tabular}}}}%
    \put(0.34631084,0.33444242){\color[rgb]{0,0,0}\makebox(0,0)[lt]{\lineheight{1.25}\smash{\begin{tabular}[t]{l}$u_1$\end{tabular}}}}%
  \end{picture}%
\endgroup%

\caption[The kernel of a $\K$-equivariant augmentation map]{The kernel of a $\K$-equivariant augmentation map. For the toric manifold $(\C P^1 \times \C P^1, \omega_{1,2}, \mathbb{T}^2 / S^1 \times \mathbb{T}^2 / S^1)$ ($\K = \mathbb{T}^2$), we show the exponents of the monomials generating $\mathcal{J}_{\K}^*(1)$ - in blue -, and of elements $q \in \mathcal{R}$ such that $q \notin \mathcal{J}_{\K}^*(1)$, but $u_i q \in \mathcal{J}_{\K}^*(1)$ for all $i=1,2$ - in red.\looseness=-1}
\label{fig4}
\end{center}
\end{figure}

In this section, we prove that a statement similar to \cite[Corollary $1.3$]{Giv95} holds for $\mathcal{J}_{\K_0}^*(F^-(\nu))$, provided that the symplectic toric manifold $(M,\omega, \mathbb{T})$ is \textit{monotone}. To that aim, we start by bounding $\mathcal{J}_{\K_0}^*(F^-(\nu))$ from below and above with the projections to $\mathcal{R}_0$ of two modules of a form similar to that from equation $(\ref{eq10})$. This simplifies the analysis of $\mathcal{J}_{\K_0}^*(F^-(\nu))$ to that of much explicit modules. The proof of Proposition \ref{prop1.2.3} is then a careful analysis of the degrees of the polynomials of these submodules. 

Assume that $(M,\omega)$ is \textit{monotone}. Recall that through the isomorphism $H^2(M;\R) \simeq \liek^*$ of equation $(\ref{eq1})$, the regular value $p$ represents the cohomology class of $\omega$. Suppose that $p$ is \textit{integral} and \textit{primitive}, and let $N_M$ denote the minimal Chern number of $(M, \omega)$. We have 
$$
p = \frac{c_1}{N_M}.
$$
Let $(V,\xi := \ker \alpha)$ be the prequantization space over $(M,\omega)$ constructed in section \ref{sec3.1.3}, $h$ be a contact Hamiltonian of $V$, and $\overline{h}$ a lift of $h$ to the sphere $S_p$ from equation $(\ref{eq8})$.
Let $c_-, c_+ \in \R$ be two constants such that for any $(z, t) \in S_p \times [0,1]$, we have
$$
 \quad \text{and} \quad c_- < \frac{\underset{z \in S_p}{\min} |z|^2}{2 \underset{z \in S_p}{\max} |z|^2} \overline{h}_t(z) \quad \text{and} \quad c_+ > \frac{2 \underset{z \in S_p}{\max} |z|^2}{\underset{z \in S_p}{\min} |z|^2} \overline{h}_t(z).
$$
We will denote by $H_-$ and $H_+$ the quadratic Hamiltonians on $\C^n$ generating respectively the Hamiltonian symplectomorphisms 
$$
\Phi_{H_-}(z) = \exp(c_-\frac{b}{p(b)})z \quad \text{and} \quad \Phi_{H_+}(z) = \exp(c_+\frac{b}{p(b)})z.
$$
Provided that $N_1$ is big enough, we wish to compare the generating families $F_{-, N}$, $F_N$, $F_{+, N}$ associated respectively with the decompositions 
$$
\begin{array}{ccr}
\Phi_{H_-} = \underbrace{\exp(c_-\frac{b}{2 N_1 p(b)}) \circ \cdots \circ \exp(c_-\frac{b}{2 N_1 p(b)})}_{2N_1-\text{times}}, \\\\
\Phi_{\widetilde{H}} = \Phi_{2N_1} \circ \cdots \circ \Phi_1, \\\\ \text{and} \\\\
\Phi_{H_+} = \underbrace{\exp(c_+\frac{b}{2 N_1 p(b)}) \circ \cdots \circ \exp(c_+\frac{b}{2 N_1 p(b)})}_{2N_1-\text{times}},
\end{array}
$$
where $\widetilde{H}$ is the Hamiltonian lift of $h$ obtained by extending $\overline{h}$ to $\C^n$. Recall also that we have denoted by $S_N := S^{4nN-1}$ the unit sphere in $\C^{2nN}$.
\begin{propsec}
There exists $N_1$ such that for any $N > N_1$, and any $(x, \lambda) \in S_N \times \Lambda_N$, we have
$$
F_{-, \lambda}^{(N)} \leq F_{\lambda}^{(N)} \leq F_{+, \lambda}^{(N)},
$$
where $F_{-, \lambda}^{(N)} := F_{-, N}(., \lambda)$, $F_{\lambda}^{(N)} := F_N(.,\lambda)$, and $F_{+, \lambda}^{(N)} := F_{N, +}(., \lambda)$ denote respectively the generating functions associated with the decompositions of $\Phi_{H_-}$, $\Phi_{\widetilde{H}}$, and $\Phi_{H_+}$ into $2N_1$ parts.

\begin{proof}
First, recall from Remark \ref{rem3.4.1} that the front of $F_N$, and therefore the cohomology of $\widetilde{H}$ at a generic level $\nu$ is independent of the chosen decomposition of the Hamiltonian symplectomorphism $\Phi_{\widetilde{H}}$ into $2N_1$ parts. Therefore, we shall assume that the decomposition of $\Phi_{\widetilde{H}}$ is of the form
$$
\Phi_{\widetilde{H}} = \underbrace{\Phi_{\widetilde{H}}^{\frac{1}{2N_1}} \circ \cdots \circ \Phi_{\widetilde{H}}^{\frac{1}{2N_1}}}_{2N_1-\text{times}}.
$$
This way, from section \ref{sec3.2.2}, the Hamiltonians associated respectively with $\Phi_{\widetilde{H}}^{\frac{1}{2N_1}}$ and $\exp(c_{\pm} \frac{b}{2N_1p(b)})$ are given, for any $(z, t) \in \C^n \setminus \{0\} \times [0,1]$, by 
$$
\frac{1}{2N_1}\widetilde{H}_{\frac{t}{2N_1}}(z) = \frac{1}{2N_1} \frac{|z|^2}{|pr(z)|^2} \overline{h}_{\frac{t}{2N_1}}(pr(z)) \quad \text{and} \quad \frac{1}{2N_1} H_{\pm, \frac{t}{2N_1}}(z) =  \frac{1}{2N_1} \frac{|z|^2}{|pr(z)|^2} c_{\pm}, 
$$
where $pr : \C^n \setminus \{0\} \to S_p$ is the radial projection to the sphere $S_p$ from equation $(\ref{eq8})$. Moreover, the homogeneous generating functions $\Hcal$ and $\mathcal{T}_{c_{\pm}}$ associated respectively with the Hamiltonian symplectomorphisms $\Phi_{\widetilde{H}}^{\frac{1}{2N_1}}$ and $\Phi_{H_{\pm}}^{\frac{1}{2N_1}} = \exp(c_{\pm} \frac{b}{2N_1p(b)})$ are independent of $j = 1, \hdots, 2N_1$, and therefore we can write
$$
\begin{array}{ccr}
\F_{\lambda}^{(N)}(x) = \mathcal{Q}(x) - \underset{j=1}{\overset{2N_1} \sum} \mathcal{H} (x_j) - \underset{j=2N_1+1}{\overset{2N} \sum} \mathcal{T}_{\lambda}(x_j), \\\\
\F_{\pm, \lambda}^{(N)}(x) = \mathcal{Q}(x) - \underset{j=1}{\overset{2N_1} \sum} \mathcal{T}_{c_{\pm}} (x_j) - \underset{j=2N_1+1}{\overset{2N} \sum} \mathcal{T}_{\lambda}(x_j),
\end{array}
$$
where $x = (x_1, \hdots, x_{2N})$ with $x_j \in \C^n$, and $\mathcal{T}_{\lambda}$ is the generating function associated with $\exp(\frac{\lambda}{2N_2})$. Therefore, it suffices to show that there exists $N_1$ such that, if $\overline{B(0,1)}$ denotes the closed ball centered at $0$ and of radius $1$ in $\C^n$ (recall that we wish to compare the generating families, rather than the homogeneous generating families, and therefore we restrict the functions to the sphere $S_N$), then for any $j=1, \hdots, 2N_1$, and any $x \in \overline{B(0,1)}$, we have
\begin{equation}{\label{eq12}}
\mathcal{T}_-(x) \geq \mathcal{H}(x) \geq \mathcal{T}_+(x).
\end{equation}
Recall that we have identified $(\overline{\C^n} \times \C^n, - \omega \oplus \omega)$ with $(T^*\C^n, - \droit (p\droit q))$ by means of the linear symplectomorphism $\Psi(z,w) = (\frac{z + w}{2}, i(z -w))$. Through $\Psi$, the Hamiltonians $\widetilde{H}_t$ and $H_{\pm, t}$ become respectively
$$
(0 \oplus \widetilde{H}_t) \circ \Psi^{-1} \quad \text{and} \quad (0 \oplus H_{\pm, t}) \circ \Psi^{-1}, \quad t \in [0,1],
$$
and their asscociated Hamiltonian isotopies $\{\Phi_{\widetilde{H}}^t\}_{t \in [0,1]}$ and $\{\Phi_{H_{\pm}}^t\}_{t \in [0,1]}$ write respectively
$$
\begin{array}{ccr}
\Phi_{(0 \oplus \widetilde{H}) \circ \Psi^{-1}}^t = \Psi \circ \Phi_{\widetilde{H}}^t \circ \Psi^{-1} \\\\
\text{and} \\\\
\Phi_{(0 \oplus H_{\pm}) \circ \Psi^{-1}}^t = \Psi \circ \Phi_{H_{\pm}}^t \circ \Psi^{-1}.
\end{array}
$$
In particular, since $\Psi^{-1}$ sends the zero-section $0_{\C^n}$ to the diagonal in $\overline{\C^n} \times \C^n$, notice that for any $(x, t) \in \C^n \times [0,1]$, we have
$$
\begin{array}{ccr}
(0 \oplus \widetilde{H}_t) \circ \Psi^{-1}(\Phi_{(0 \oplus \widetilde{H}) \circ \Psi^{-1}}^t(x, 0)) = \widetilde{H}_t(\Phi_{\widetilde{H}}^t(x)) \\\\ \text{and} \\\\ 
(0 \oplus H_{\pm, t}) \circ \Psi^{-1}(\Phi_{(0 \oplus H_{\pm}) \circ \Psi^{-1}}^t(x, 0)) = H_{\pm, t}(\Phi_{H_{\pm}}^t(x)).
\end{array}
$$
Now, if $\Hcal_t$ and $\mathcal{T}_{t, \pm}$ denote the homogeneous generating functions associated respectively with $\Phi_{\widetilde{H}}^{\frac{t}{2N_1}}$ and $\Phi_{H_{\pm}}^{\frac{t}{2N_1}}$, the Hamilton-Jacobi equation and the discussion above yield, for any $(x, t) \in \C^n \times [0,1]$,
$$
\frac{\partial}{\partial t} \mathcal{H}_t(x) = - \frac{1}{2N_1}\widetilde{H}_{\frac{t}{2N_1}}(\Phi_{\widetilde{H}}^{\frac{t}{2N_1}}(x)) \quad \text{and} \quad \frac{\partial}{\partial t} \mathcal{T}_{t, \pm}(x) = - \frac{1}{2N_1}H_{\pm, \frac{t}{2N_1}}(\Phi_{H_{\pm}}^{\frac{t}{2N_1}}(x)),
$$
since $\widetilde{H}_t(0) = \mathcal{T}_{t, \pm}(0) = 0$ for all $t \in [0,1]$, and the inequality $(\ref{eq12})$ holds for $x=0$ (note that $\widetilde{H} = \widetilde{H}_1$ and $\mathcal{T}_{\pm} = \mathcal{T}_{1, \pm}$). Assume now that $N_1$ is big enough so that for any $(x, t) \in (\overline{B(0,1)} \setminus \{0\}) \times [0,1]$, we have 
$$
\frac{1}{2} |\Phi_{H_-}^{\frac{t}{2N_1}}(x)|^2 \leq |\Phi_{\widetilde{H}}^{\frac{t}{2N_1}}(x)|^2 \leq 2 |\Phi_{H_+}^{\frac{t}{2N_1}}(x)|^2.
$$
In this case, we have on the one hand:
\begin{align*}
H_{-, \frac{t}{2N_1}}(\Phi_{H_-}^{\frac{t}{2N_1}}(x)) = \frac{|\Phi_{H_-}^{\frac{t}{2N_1}}(x)|^2}{|pr(\Phi_{H_-}^{\frac{t}{2N_1}}(x))|^2} c_- \leq & \frac{|\Phi_{H_-}^{\frac{t}{2N_1}}(x)|^2}{\underset{x \in S_p}\min |x|^2} c_- \\ 
\leq & \frac{|\Phi_{H_-}^{\frac{t}{2N_1}}(x)|^2}{2\underset{x \in S_p}{\max}|x|^2} \overline{h}_{\frac{t}{2N_1}}(\Phi_{\widetilde{H}}^{\frac{t}{2N_1}}(x)) \\
\leq & \frac{|\Phi_{\widetilde{H}}^{\frac{t}{2N_1}}(x)|^2}{\underset{x \in S_p}{\max}|x|^2} \overline{h}_{\frac{t}{2N_1}}(\Phi_{\widetilde{H}}^{\frac{t}{2N_1}}(x)) \\
\leq & \frac{|\Phi_{\widetilde{H}}^{\frac{t}{2N_1}}(x)|^2}{|pr(\Phi_{\widetilde{H}}^{\frac{t}{2N_1}}(x))|^2} \overline{h}_{\frac{t}{2N_1}}(\Phi_{\widetilde{H}}^{\frac{t}{2N_1}}(x))  = \widetilde{H}_{\frac{t}{2N_1}}(\Phi_{\widetilde{H}}^{\frac{t}{2N_1}}(x)),
\end{align*}
and one the other hand:
\begin{align*}
H_{+, \frac{t}{2N_1}}(\Phi_{H_+}^{\frac{t}{2N_1}}(x)) = \frac{|\Phi_{H_+}^{\frac{t}{2N_1}}(x)|^2}{|pr(\Phi_{H_+}^{\frac{t}{2N_1}}(x))|^2} c_+ \geq & \frac{|\Phi_{H_+}^{\frac{t}{2N_1}}(x)|^2}{\underset{x \in S_p}\max |x|^2} c_+ \\ 
\geq & \frac{2 |\Phi_{H_+}^{\frac{t}{2N_1}}(x)|^2}{\underset{x \in S_p}{\min}|x|^2} \overline{h}_{\frac{t}{2N_1}}(\Phi_{\widetilde{H}}^{\frac{t}{2N_1}}(x)) \\
\geq & \frac{|\Phi_{\widetilde{H}}^{\frac{t}{2N_1}}(x)|^2}{\underset{x \in S_p}{\min}|x|^2} \overline{h}_{\frac{t}{2N_1}}(\Phi_{\widetilde{H}}^{\frac{t}{2N_1}}(x)) \\
\geq & \frac{|\Phi_{\widetilde{H}}^{\frac{t}{2N_1}}(x)|^2}{|pr(\Phi_{\widetilde{H}}^{\frac{t}{2N_1}}(x))|^2} \overline{h}_{\frac{t}{2N_1}}(\Phi_{\widetilde{H}}^{\frac{t}{2N_1}}(x))  = \widetilde{H}_{\frac{t}{2N_1}}(\Phi_{\widetilde{H}}^{\frac{t}{2N_1}}(x)).
\end{align*}
As a consequence, the Hamilton-Jacobi equation shows that, for any $(x, t) \in \overline{B(0,1)} \setminus \{0\} \times [0,1]$,
$$
\frac{\partial}{\partial t} \mathcal{T}_{t, -}(x) \geq \frac{\partial}{\partial t} \mathcal{H}_t(x) \geq \frac{\partial}{\partial t} \mathcal{T}_{t, +}(x).
$$
It remains to notice that the homogeneous generating function associated with the identity is constant equal to $0$. Therefore, equation $(\ref{eq12})$ holds, and the proposition follows.
\end{proof}
\end{propsec}
Kipping the same notations as in the above proposition, we conclude that, for any $\nu \in \R$ we have inclusions of pairs
\begin{comment}
_- := \nu + c_- < \nu < \nu_+ := \nu + c_+$, this yields inclusions of pairs 
\end{comment}
$$
(F_{+,N}^-(\nu), \partial F_{+,N}^-(\nu)) \subset (F_N^-(\nu), \partial F_N^-(\nu)) \subset (F_{-,N}^-(\nu), \partial F_{-,N}^-(\nu)),
$$
and thus homomorphisms
$$
H_{\K_0}^*(F_{-,N}^-(\nu), \partial F_{-,N}^-(\nu)) \to H_{\K_0}^*(F_N^-(\nu), \partial F_N^-(\nu)) \to H_{\K_0}^*(F_{+,N}^-(\nu), \partial F_{+,N}^-(\nu)).
$$
Moreover, notice that, up to a reparametrization, for any $\lambda \in \Lambda_N$, the homogeneous generating functions $\F_{\pm,\lambda}^{(N)}$ are nothing else that the homogeneous generating functions $\G_{\lambda + c_{\pm}\frac{b}{p(b)}}$ associated with the decompositions
$$
\underbrace{\exp(\frac{\lambda + c_{\pm}\frac{b}{p(b)}}{2 N}) \circ \cdots \circ \exp(\frac{\lambda + c_{\pm}\frac{b}{p(b)}}{2 N})}_{2N-\text{times}}.
$$
Thus, the above homomorphisms become
$$
H_{\K_0}^*(G_N^-(\nu_-), \partial G_N^-(\nu_-)) \to H_{\K_0}^*(F_N^-(\nu), \partial F_N^-(\nu)) \to H_{\K_0}^*(G_N^-(\nu_+), \partial G_N^-(\nu_+)).
$$
where $\nu_{\pm} := \nu + c_{\pm}$. If $\nu_{\pm}$ and $\nu$ are generic, we obtain homomorphisms in the limit $N \to \infty$:
$$
\Hcal_{\K_0}^*(G^-(\nu_-)) \to \Hcal_{\K_0}^*(F^-(\nu)) \to \Hcal_{\K_0}^*(G^-(\nu_+)),
$$
leading to inclusions of kernels
$$
\mathcal{J}_{\K_0}^*(G^-(\nu_-)) \subset \mathcal{J}_{\K_0}^*(F^-(\nu)) \subset \mathcal{J}_{\K_0}^*(G^-(\nu_+)).
$$
Note that for any $\nu \in \R$, the $\C[u]$-module $J^*(\nu)$ from equation $(\ref{eq10})$ lies between two modules of the form
$$
J_r := \langle u^{\iota(m)} \ |\ m \in \liek_{\Z},\ p(m) \geq r \rangle.
$$
More precisely, one can find $r_- < r_+$ such that we have inclusions
$$
J_{r_+} \subset J^*(\nu_-) \subset J^*(\nu_+) \subset J_{r_-}.
$$
From the isomorphisms
$$
\mathcal{J}_{\K_0}^*(G^-(\nu_-)) \simeq \mathcal{J}_{\K_0}^*(\nu_-) \quad \text{and} \quad \mathcal{J}_{\K_0}^*(G^-(\nu_+)) \simeq \mathcal{J}_{\K_0}^*(\nu_+),
$$
we deduce embeddings
$$
\mathcal{J}_{r_+}^0 \subset \mathcal{J}_{\K_0}^*(F^-(\nu)) \subset \mathcal{J}_{r_-}^0,
$$
where $\mathcal{J}_{r_{\pm}}^0$ denote the images of $J_{r_{\pm}}$ by the quotient map 
$$
\C[u,u^{-1}] \to \C[u,u^{-1}] / I_0 \C[u,u^{-1}] \simeq \mathcal{R}_0.
$$

\noindent \textbf{Proposition \ref{prop1.2.3}.} \textit{There exists $q \in \mathcal{R}_0$, such that $q \notin \mathcal{J}_{\K_0}^*(F^-(\nu))$, but $u_i q \in \mathcal{J}_{\K_0}^*(F^-(\nu))$ for all $i = 1,...,n$.}
\begin{proof}
\textbf{Step $1$}: By the end of our demonstration, we will use a dimensionality result from \cite{Giv95}, which holds for the ideal $\C[u] \cap J_r + I$. In our case, we will deal with the ideal $\C[u] \cap J_r + I_0$, which we now relate to the module $\mathcal{J}_r^0$. Consider the projection 
$$
\text{pr} : \C[u] \subset \C[u,u^{-1}] \to \C[u,u^{-1}] / I_0\C[u,u^{-1}] \simeq \mathcal{R}_0.
$$ 
For $r \in \R$, the preimage $\text{pr}^{-1}(\mathcal{J}_r^0)$ is the intersection of $\C[u]$ with the preimage of $\mathcal{J}_r^0$ by the quotient map $\C[u,u^{-1}] \to \mathcal{R}_0$, which equals $J_r + I_0 \C[u,u^{-1}]$. Therefore, we have
$$
\text{pr}^{-1}(\mathcal{J}_r^0) = \C[u] \cap (J_r + \C[u,u^{-1}]I_0) \supset \C[u] \cap J_r + I_0.
$$

\noindent \textbf{Step $2$.} Here we show that all the polynomials in $J_r$ have minimal degree $rN_M$. For $m \in \liek_{\Z}$, we saw that $p(m) = \frac{1}{N_M} \underset{i=1}{\overset{n} \sum} m_i$, where $\iota(m) = (m_1, \hdots, m_n)$. Then $J_r$ consists of polynomials whose monomials are of the form $u^{\iota(m)+m'}$, where $m \in \liek_{\Z}$ is such that $p(m) \geq r$, and $m' = (m'_1,\hdots ,m'_n) \in \mathbb{Z}^n_{\geq 0}$. In particular, we have
$$
\underset{i=1}{\overset{n} \sum} m_i + \underset{i=1}{\overset{n} \sum} m'_i
\geq \underset{i=1}{\overset{n} \sum} m_i = p(m)N_M \geq rN_M.
$$
Therefore, letting $\C[u,u^{-1}]^{\geq d}$ denote the submodule generated by monomials of total degree at least $d$, we obtain
$$
J_r \subset \C[u,u^{-1}]^{\geq rN_M}.
$$

\noindent \textbf{Step $3$.} Here we show that, for the notion of degree induced on $\mathcal{R}_0$ by that on $\C[u,u^{-1}]$, the elements of the module $\mathcal{J}_r^0$ have minimal degree $rN_M$. The quotient map $\C[u,u^{-1}] \to \C[u,u^{-1}] / I_0 \C[u,u^{-1}]$ is the restriction map from the ring of regular functions on the complex torus $(\C^{\times})^n$ to the ring of regular functions on the intersection $(\liek_0 \otimes \C) \cap (\C^{\times})^n$. If $f$ is a homogeneous regular function of degree $d$ on $(\C^{\times})^n$, then for any $z \in (\C^{\times})^n$, and any $\mu \in \C^{\times}$, we have $f(\mu z) = \mu^d f(z)$. This characterizes entirely the degree of $f$. Moreover, $\C^{\times}$ acts on the ring of regular functions on $(\liek_0 \otimes \C) \cap (\C^{\times})^n$ in the same way, and the restriction is equivariant with respect to this action. This means that $f$ restricts to a regular function on $(\liek_0 \otimes \C) \cap (\C^{\times})^n$ which is of same degree, or equals $0$. Thus, if $\mathcal{R}_0^{\geq d}$ denotes the ring of regular functions of degree at least $d$ on $(\liek_0 \otimes \C) \cap (\C^{\times})^n$, we have
$$
\mathcal{J}_r^0 \subset \mathcal{R}_0^{\geq rN_M}.
$$

\noindent \textbf{Step $4$.} It is clear from the definition of $J_r$ that for any $m \in \liek_{\Z}$, we have $u^{\iota(m)} J_r = J_{r+r_0}$, where $r_0 = p(m)$. In particular
$$
u^{\iota(m)} \mathcal{J}_r^0 = \mathcal{J}_{r+r_0}^0.
$$
Therefore we can "move $\mathcal{J}_r^0$ above a certain minimal degree". This will serve us in Step $5$. Pick any $m \in \liek_{\Z}$ such that $(r_- + r_0)N_M \geq 1$. Then
$$
u^{\iota(m)} \mathcal{J}_{r_-}^0 = \mathcal{J}_{r_- + r_0}^0 \subset \mathcal{R}_0^{\geq (r_-+r_0)N_M} \subset \mathcal{R}_0^{\geq 1}.
$$
In particular $1 \notin u^{\iota(m)} \mathcal{J}_{r_-}^0$, which means that $1 \in \C[u]$ is not mapped to $u^{\iota(m)} \mathcal{J}_{r_-}^0$ by the projection $\text{pr} : \C[u] \to \mathcal{R}_0$, and thus is also not mapped to $u^{\iota(m)} \mathcal{J}_{\K_0}^*(F^-(\nu)) \subset u^{\iota(m)} \mathcal{J}_{r_-}^0$.\\

\noindent \textbf{Step $5$.} Let 
$$
A := \lbrace u^a \in \C[u] \  | \ \text{pr}(u^a) \notin u^{\iota(m)} \mathcal{J}_{\K_0}^*(F^-(\nu)) \rbrace.
$$
In the previous step we saw that $1 = u^0 \in A$, so $A \neq \emptyset$. We claim that the maximal degree $\underset{i=1}{\overset{n} \sum} a_i$ of any element of $A$ is bounded from above. Since $u^{\iota(m)} \mathcal{J}_{r_+}^0 \subset u^{\iota(m)} \mathcal{J}_{\K_0}^*(F^-(\nu))$, we see that $\text{pr}(u^a) \notin u^{\iota(m)} \mathcal{J}_{\K_0}^*(F^-(\nu))$ implies $\text{pr}(u^a) \notin u^{\iota(m)} \mathcal{J}_{r_+}^0 = \mathcal{J}_r^0$, where $r = r_+ + r_0$. Thus
$$
u^a \notin \text{pr}^{-1}(\mathcal{J}_r^0) \supset \C[u] \cap J_r + I_0 \quad \text{hence} \quad u^a \notin \C[u] \cap J_r + I_0.
$$
Therefore, $A$ lies in the complement in $\C[u]$ of the ideal $\C[u] \cap J_r + I_0$. By (the proof of) {\cite[Proposition $1.2$]{Giv95}}, the zero set $Z(\C[u] \cap J_r + I)$ of the ideal $\C[u] \cap J_r + I$ has at most one point, the origin. Since $I_0 \supset I$, we have
$$
Z(\C[u] \cap J_r + I_0) \subset Z(\C[u] \cap J_r + I) \subset \{0\}.
$$
By the Nullstellensatz, this implies that for every $i$, there exists $m_i \geq 0$ such that $u_i^{m_i} \in \C[u] \cap J_r + I_0$, and it is easy to see that every monomial of total degree $ \geq \underset{i=1}{\overset{n} \sum} m_i$ must then also belong to the ideal. The conclusion is that $\C[u] \cap J_r + I_0$ contains all monomials of sufficiently high degree, and as a result the maximal degree of a monomial $u^a \in A$ is bounded from above.\\

\noindent \textbf{Conclusion.} Let $u^a \in A$ have maximal degree. Then $u_i u^a \notin A$ for all $i=1,\hdots ,n$. This means that $u^a \notin u^{\iota(m)} \mathcal{J}_{\K_0}^*(F^-(\nu))$, while $u_i u^a \in u^{\iota(m)} \mathcal{J}_{\K_0}^*(F^-(\nu))$. Therefore $q = u^{a-\iota(m)} \in \mathcal{R}_0 \setminus \mathcal{J}_{\K_0}^*(F^-(\nu))$, but $u_i q \in \mathcal{J}_{\K_0}^*(F^-(\nu))$ for all $i=1,\hdots ,n$, as claimed.
\end{proof}

\bibliographystyle{alpha}
\bibliography{paper}{}

\end{document}